\documentclass[11pt, reqno]{amsart}
\usepackage{color}
\usepackage{amsmath}
\usepackage{amssymb}
\usepackage{amsfonts}
\usepackage{mathrsfs}
\usepackage{amsbsy}
\usepackage{relsize}
\usepackage[colorlinks, linkcolor=red, citecolor=blue, urlcolor=blue, pagebackref, hypertexnames=false]{hyperref}
\usepackage[hyperpageref]{backref}
\usepackage[lmargin=3cm,rmargin=3cm,tmargin=3cm,bmargin=3cm]{geometry}
\textheight 21.5cm \textwidth 15.5cm
\voffset=0.5cm
\oddsidemargin=0.5cm \evensidemargin=0.5cm \topmargin=0.5cm
\def\R{\mathbb{R}}

\def\eps{\varepsilon}
\def\vv{\mathbf{V}}
\def\mm{\mathbf{M}}
\def\kk{\mathbf{K}}
\def\Ibold{\mathbf{I}}
\def\Jbold {\mathbf{J}}
\numberwithin{equation}{section}




\newtheorem{theo}{Theorem}[section]
\newtheorem{prop}[theo]{Proposition}
\newtheorem{lem}[theo]{Lemma}
\newtheorem{rem}[theo]{Remark}

\newtheorem{defi}[theo]{Definition}
\newtheorem{cor}[theo]{Corollary}
\numberwithin{equation}{section}

\newcommand{\beq}{\begin{equation}}
\newcommand{\eeq}{\end{equation}}

\newcommand{\bean}{\begin{eqnarray}}
\newcommand{\eean}{\end{eqnarray}}

\newcommand{\bea}{\begin{eqnarray*}}
\newcommand{\eea}{\end{eqnarray*}}

\newcommand{\bd}{\begin{description}}
\newcommand{\ed}{\end{description}}

\newcommand{\bc}{\begin{center}}
\newcommand{\ec}{\end{center}}

\newcommand{\ben}{\begin{enumerate}}
\newcommand{\een}{\end{enumerate}}

\newcommand{\bit}{\begin{itemize}}
\newcommand{\eit}{\end{itemize}}

\author{ Abdelwahab Bensouilah}

\address{Laboratoire Paul Painlev\'e (U.M.R. CNRS 8524), U.F.R. de Math\'ematiques, Universit\'e Lille 1, 59655 Villeneuve d'Ascq Cedex, France
}
\email{\sl ai.bensouilah@math.univ-lille1.fr}

\author{Van Duong Dinh}
\address{Institut de Mathematiques de Toulouse UMR5219, Universit\'e de Toulouse CNRS, 31062 Toulouse Cedex 9, France}
\email{\sl dinhvan.duong@math.univ-toulouse.fr}


\author{Mohamed Majdoub}
\address{Department of Mathematics, College of science, Imam Abdulrahman Bin Faisal University, P. O. Box 1982, Dammam, Saudi Arabia}
\email{\sl mmajdoub@iau.edu.sa}



\title[Scattering in the weighted $L^2$-space for exponential NLS] {Scattering in the weighted $L^2$-space for a 2D nonlinear Schr\"odinger equation with inhomogeneous exponential nonlinearity}

\date{\today}

\begin{document}
	\begin{abstract}
		We investigate the defocusing inhomogeneous  nonlinear Schr\"odinger equation
		$$
		i \partial_tu + \Delta u = |x|^{-b} \left({\rm e}^{\alpha|u|^2} - 1- \alpha |u|^2 \right) u, \quad u(0)=u_0, \quad x \in \R^2,
		$$
		with $0<b<1$ and $\alpha=2\pi(2-b)$. First we show the decay of global solutions by assuming that the initial data $u_0$ belongs to the weighted space $\Sigma(\R^2)=\{\,u\in H^1(\R^2) \ : \ |x|u\in L^2(\R^2)\,\}$. Then we combine the local theory with the decay estimate to obtain scattering in $\Sigma$ when the Hamiltonian is below the value $\frac{2}{(1+b)(2-b)}$.
	\end{abstract}
	
	\subjclass[2000]{35-xx, 35L70, 35Q55, 35B40, 35B33, 37K05, 37L50}
	
	\keywords{Inhomogeneous nonlinear Schr\"odinger equation; Decay solutions; Virial identity; Scattering; Weighted $L^2$-space; Exponential nonlinearity, Singular Moser-Trudinger inequality.}

	\maketitle
	
	
	\section{Introduction and main result}
	\label{S1}
	This paper is concerned with the scattering theory for the following initial value problem
	\begin{align}
		\left\{
		\begin{array}{rcl}
			i\partial_t u + \Delta u &=& |x|^{-b} \left({\rm e}^{\alpha|u|^2} - 1- \alpha |u|^2 \right) u,\\
			u(0) &=& u_0,
		\end{array}
		\right.
		\label{NLSexp}
	\end{align}
	where $u=u(t,x)$ is a complex-valued function in space-time $\R\times\R^2$, $0<b<1$ and $\alpha=2\pi(2-b)$.
	
	The classical nonlinear Schr\"odinger equation ($b=0$)  with pure power or exponential nonlinearities arises in various physical contexts, as for example the self trapped beams in plasma, the propagation of a laser beam, water waves at the free surface of an ideal fluid and plasma waves (see \cite{LLT}).
	
	From the mathematical point of view, the classical NLS equation, i.e.,  problem \eqref{NLSexp} with $b=0$, has attracted considerable attention in the mathematical community and the well-posedness theory as well as the scattering has been extensively studied, see for instance \cite{Azz, BIP, Caz, CIMM, NL, NO3}. We refer the reader to \cite{Ca, Tao} and references therein for more properties and
	information on nonlinear Schr\"odinger equations.
	
	In particular, in \cite{CIMM} a notion of criticality was proposed and the authors established in both subcritical and critical regimes the existence of global solutions in the functional space $C(\R,H^1(\R^2))\cap L^4_{loc}(\R, W^{1,4}(\R^2))$. Later on in \cite{NL}, the scattering in the energy space was obtained in the subcritical case. Note that the critical case was investigated in \cite{BIP} where the scattering is proved in the radial framework.
	
	The situation in the case $b>0$ is less understood. Recently, in \cite{BDM} the authors established the global well-posedness in the energy space for $0<b<1$. A natural question to ask then is the long time behavior of global solutions, that is the scattering. This means that every global solution of \eqref{NLSexp} approaches solutions to the associated free equation
	\begin{equation}
	\label{SL}
	i\partial_t v + \Delta v=0,
	\end{equation}
	in the energy space $H^1$ as $t\to\pm\infty$. The main difficulty is how to obtain the interaction Morawetz inequality? Recall that the interaction Morawetz inequality is nothing but the convolution of the classical one with the mass density. This in particular leads to a priori global bound of the solution in $L^4_t (L^8_x)$ which is the main tool for the scattering in the energy space (see for instance \cite{BIP,NL, PV}). Note that the interaction Morawetz inequalities were first established for the NLS with power-type nonlinearity, and the proof depends heavily on the form of nonlinearity. Of course the proof can be easily adapted to more general homogeneous nonlinearities. More precisely, for linear combination of powers it suffices that all the powers are quadratic or higher with positive coefficients. The problem with singular weight (or for non-homogeneous nonlinearity) is much more difficult and should be investigated separately. For instance, it was noticed in \cite{Dinh-energy} that the interaction Morawetz inequality for the NLS with singular nonlinearity $N(x,u) =|x|^{-b} |u|^\alpha u$ may not hold due to the lack of momentum conservation law.
	
	This is why we restrict ourselves to initial data belonging to the weighted $L^2$-space $\Sigma:=H^1 \cap L^2(|x|^2 dx)$. Note that the scattering in $\Sigma$ for the NLS with $N(x,u) = |x|^{-b} |u|^\alpha u$ was considered by the second author in \cite{Dinh}.
	
	The scattering in the energy space will be investigated in a forthcoming paper, and we believe that some ideas developed in \cite{BIP} will be helpful.
	
	\begin{rem}
		We stress that the two-dimensional nonlinear Klein-Gordon equation with pure exponential nonlinearity was studied in \cite{CPAM, Duke, APDE}, and a similar trichotomy based on the energy was defined. Recently, M. Struwe \cite{Struwe1, Struwe2} was able to construct global smooth solution for smooth initial data and prove the scattering \cite{Struwe0}.
	\end{rem}
	
	Before stating our main result, let us recall that solutions of \eqref{NLSexp} satisfy the conservation of mass
	and Hamiltonian
	\begin{equation}
	\label{mass} {\mathcal M}(u(t)):=\|u(t)\|_{L^2},
	\end{equation}
	\begin{equation}
	\label{hamil} {\mathcal H}(u(t)):=\int|\nabla u(t,x)|^2 +\frac{1}{\alpha}\int\left({\rm e}^{\alpha|u(t,x)|^2}-1-\alpha|u(t,x)|^2-\frac{\alpha^2}{2}|u(t,x)|^4 \right) \frac{dx}{|x|^b}.
	\end{equation}
	Our main result is the following.
	\begin{theo}
		\label{Main-NLS}
		Let $u_0 \in \Sigma$ be such that $\mathcal{H}(u_0) <\frac{2}{(1+b)(2-b)}$. Then the corresponding global solution $u$ of \eqref{NLSexp} satisfies $u \in L^4(\R, \mathcal{C}^{1/2})$ and there exist $u_0^\pm \in \Sigma$ such that
		\[
		\lim_{t\rightarrow \pm \infty} \|e^{-it\Delta} u(t) - u_0^\pm\|_{\Sigma} =0. 
		\]
	\end{theo}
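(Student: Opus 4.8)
The plan is to follow the now-standard three-step route to scattering in the weighted space $\Sigma$: (i) extract a global space-time bound, here $u\in L^4(\R,\C^{1/2})$, from a decay estimate for the $L^p$-norms of the solution; (ii) upgrade this to a scattering statement in $H^1$ via a standard Cook/Strichartz argument; and (iii) propagate the weight $|x|u\in L^2$ to the scattering states by exploiting the pseudoconformal (Galilei) vector field $J(t)=x+2it\nabla$.

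First I would establish the decay of global solutions. The smallness hypothesis $\mathcal H(u_0)<\frac{2}{(1+b)(2-b)}$ is the exponential-nonlinearity analogue of a subcriticality threshold: it should guarantee, via the singular Moser–Trudinger inequality, that the relevant nonlinear terms stay in a subcritical regime and that the pseudoconformal energy is positive and controlled. The key tool is the virial / pseudoconformal identity: differentiating the functional $\frac12\|J(t)u(t)\|_{L^2}^2 + c\,t^2 \int |x|^{-b} G(|u|) \,dx$ (with $G$ the primitive $\mathrm e^{\alpha s}-1-\alpha s-\tfrac{\alpha^2}{2}s^2$ appearing in $\mathcal H$) twice in time, one obtains a differential inequality whose error terms carry a favorable factor of $t$ coming from the inhomogeneity $|x|^{-b}$ — the exponent $b>0$ actually helps here, which is why $0<b<1$ is assumed. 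Integrating yields $\|J(t)u(t)\|_{L^2}\lesssim 1$ uniformly in $t$, and then the Gagliardo–Nirenberg-type decay $\|u(t)\|_{L^p}\lesssim |t|^{-(1-2/p)}$ for $p\ge 2$ follows from writing $\nabla(\mathrm e^{-it\Delta}(\cdot))$ in terms of $J(t)$ and using $\mathrm e^{it|x|^2/4t}$ conjugation. Combined with the local theory and Strichartz estimates, this decay is summable enough to close a bootstrap giving the global bound $u\in L^4(\R,\C^{1/2})$ — this is the step I expect to be the main obstacle, since one must verify that the exponential nonlinearity, after the Moser–Trudinger control afforded by the Hamiltonian threshold, produces time-integrable contributions; the borderline nature of the $\exp L^2$ scaling in 2D makes the bookkeeping delicate.

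With $u\in L^4(\R,\C^{1/2})$ in hand, the $H^1$-scattering is routine: I would show that $\{\mathrm e^{-it\Delta}u(t)\}_{t}$ is Cauchy in $H^1$ as $t\to\pm\infty$ by writing, via Duhamel,
\[
\mathrm e^{-it_2\Delta}u(t_2)-\mathrm e^{-it_1\Delta}u(t_1) = -i\int_{t_1}^{t_2} \mathrm e^{-is\Delta}\Big(|x|^{-b}\big(\mathrm e^{\alpha|u|^2}-1-\alpha|u|^2\big)u\Big)\,ds,
\]
and bounding the right-hand side in $H^1$ using Strichartz estimates on $[t_1,t_2]$ together with the $L^4(\C^{1/2})$-bound and the singular Moser–Trudinger inequality to control the nonlinear factor (the weight $|x|^{-b}$ with $0<b<1$ is handled by Hölder/Hardy in the radial or non-radial variable, as in \cite{BDM}). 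The limits $u_0^\pm := \lim_{t\to\pm\infty}\mathrm e^{-it\Delta}u(t)$ then exist in $H^1$.

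Finally I would upgrade the convergence to $\Sigma$. Since $J(t)u(t)$ is bounded in $L^2$ uniformly and satisfies a Duhamel formula analogous to that for $u$ (because $J$ commutes with $i\partial_t+\Delta$ and acts on the nonlinearity in a controlled way, picking up again the helpful $|x|^{-b}$ factor), the same Cauchy argument shows $x\,\mathrm e^{-it\Delta}u(t)=\mathrm e^{-it\Delta}J(t)u(t)$ converges in $L^2$; its limit is necessarily $x\,u_0^\pm$, whence $u_0^\pm\in\Sigma$ and $\|\mathrm e^{-it\Delta}u(t)-u_0^\pm\|_\Sigma\to 0$. The only genuinely new ingredient beyond \cite{BDM} and \cite{Dinh} is checking that the exponential nonlinearity interacts well with the vector field $J$, which again reduces to a Moser–Trudinger estimate under the Hamiltonian bound.
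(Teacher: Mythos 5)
Your roadmap — pseudoconformal decay, global $S^1$-type bound, Cauchy argument in $H^1$, then propagation of the weight via $J(t)=x+2it\nabla$ — is precisely the strategy the paper follows, so the architecture of the proposal is sound. However, the step you explicitly flag as ``the main obstacle'' (closing the bootstrap to get $u\in L^4(\R,\mathcal{C}^{1/2})$, i.e.\ $u\in S^1(\R)$) is also where your proposal leaves a genuine gap: you gesture at ``Moser--Trudinger control afforded by the Hamiltonian threshold'' without identifying the two tools that actually make the estimate close, and without which the argument would fail. First, since $|x|^{-b}$ lies in no Lebesgue space, the paper has to work in the Lorentz space $L^{2/b,\infty}(\R^2)$ together with the H\"older-type inequality \eqref{Lor}; ordinary H\"older on $\R^2$ does not see the weight. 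Second, the exponential factor $\mathrm{e}^{\alpha|u|^2}$ is not controlled by Moser--Trudinger alone on the set where $\|u(t)\|_{L^\infty}$ is large; the paper invokes the $L^\infty$ logarithmic inequality \eqref{Log}, which converts $\mathrm{e}^{\alpha\|u\|_{L^\infty}^2}$ into a \emph{polynomial} in the H\"older norm $\|u\|_{\mathcal{C}^{1/2-\delta/2}}$ with exponent $\alpha\theta\lambda K^2(\mu)$ (resp.\ $\alpha\frac{p-1}{p}\lambda K^2(\mu)$), and the numerical threshold $\mathcal H(u_0)<\frac{2}{(1+b)(2-b)}$ is \emph{exactly} what is needed so this exponent can be taken below $2$ (resp.\ below $1$), making the resulting term absorbable by $\|u\|_{S^1}$. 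A Moser--Trudinger argument alone would not produce the specific constant $\frac{2}{(1+b)(2-b)}$, and without the log estimate there is no mechanism to tame the exponential when $\|u\|_{L^\infty}\ge 1$.

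A smaller inaccuracy worth flagging: you attribute the restriction $0<b<1$ to the sign computation in the pseudoconformal identity (``the exponent $b>0$ actually helps here, which is why $0<b<1$ is assumed''). The sign of $G$ in \eqref{Gl0} indeed improves with $b>0$, but the binding reason for $b<1$ is elsewhere: after differentiating $N(x,u)$ one picks up a term with weight $|x|^{-b-1}$, and the singular Moser--Trudinger inequality \eqref{WMT1} (applied with exponent $p(b+1)$) requires $p(b+1)<2$ with $p>1/(1-\delta)>1$, forcing $b+1<2$, i.e.\ $b<1$. So $b<1$ is a hard analytic constraint from the $\mathcal{B}$-term, not a convenience in the virial identity.
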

	Let us make some comments. First, we see that $\frac{2}{(1+b)(2-b)} \rightarrow 1$ as $b \rightarrow 0$. Thus our result extends the one in \cite{NL} for initial data in $\Sigma$. Second, the condition $\mathcal{H}(u)<\frac{2}{(1+b)(2-b)}$ illustrates the interaction between the wave function $u$ and the potential $|x|^{-b}$. More precisely, a sufficient condition for scattering is when the energy of the wave is less than a fixed amount depending on the sole parameter $b$ that characterizes the weight function involved in the Hamiltonian of \eqref{NLSexp}. Finally, a natural question that one could raise is the following: is the value $\frac{2}{(1+b)(2-b)}$
	critical for scattering, in the sense that if the energy of the wave exceeds the latter quantity, would one get scattering?
	\begin{rem}
		For all $0<b<1$, \quad $\frac{8}{9}\leq \frac{2}{(1+b)(2-b)}<1$.
	\end{rem}
	The proof of Theorem $\ref{Main-NLS}$ follows a standard strategy for the classical NLS equation. We first derive a decaying property for global solutions by using the pseudo-conformation law. We then show two types of global bounds for the solution $u$ and its weighted variant $(x+2it\nabla) u$. More precisely, we will show that
    \begin{align} \label{glo-bou}
    \|u\|_{S^1(\R)}<\infty, \quad \|(x+2it\nabla) u\|_{S^0(\R)}<\infty,
    \end{align}
    where 
    \[
    \|u\|_{S^1(\R)} := \|u\|_{L^\infty(\R, H^1)} + \|u\|_{L^4(\R, W^{1,4})}, \quad \|u\|_{S^0(\R)}:=\|u\|_{L^\infty(\R, L^2)} + \|u\|_{L^4(\R, L^4)}.
    \]
    The proof of these global bounds relies on the decaying property, the singular Moser-Trudinger inequality and the Log estimate. The main difficulty comes from the singular weight $|x|^{-b}$ which does not belong to any Lebesgue space. To overcome this problem, we will take the advantage of Lorentz spaces. Note that $|x|^{-b} \in L^{\frac{2}{b},\infty}(\R^2)$, where $L^{p,\infty}$ is the Lorentz space. Once these global bounds are established, the scattering in weighted $L^2$ space $\Sigma$ follows easily. 

    This paper is organized as follows. In Section \ref{S2}, we recall some useful tools needed in our problem. The pseudo-conformal law is derived in Section \ref{S3}. The decaying property of global solutions in Lebesgue spaces is showed in Section \ref{S4}. Sections \ref{S5} and \ref{S6} are devoted to the proofs of global bounds \eqref{glo-bou}. We shall give the proof of our main result in Theorem $\ref{Main-NLS}$ in Section \ref{S7}. 

	\section{Useful Tools}
	\label{S2}
	In this section, we collect some known and useful tools.
	\begin{prop}[\sf Moser-Trudinger inequality \cite{AT}]\quad\\
		\label{prop-MT}
		Let $\alpha\in [0,4\pi)$. A constant $c_\alpha$ exists
		such that
		\begin{equation}
		\label{MT1} \|\exp(\alpha |u|^2)-1\|_{L^1(\R^2)}\leq c_\alpha
		\|u\|_{L^2(\R^2)}^2,
		\end{equation}
		for all $u$ in $H^1(\R^2)$ such that $\|\nabla
		u\|_{L^2(\R^2)}\leq1$. Moreover, if $\alpha\geq 4\pi$, then
		(\ref{MT1}) is false.
	\end{prop}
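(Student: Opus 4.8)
The plan is to prove the two assertions separately, taking as known only Moser's classical inequality on bounded domains: for any bounded $\Omega\subset\R^2$ and $w\in H^1_0(\Omega)$ with $\|\nabla w\|_{L^2(\Omega)}\le 1$ one has $\int_\Omega e^{4\pi w^2}\,dx\le C\,|\Omega|$ with $C$ an absolute constant. Everything else is elementary reduction and bookkeeping.

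For \eqref{MT1} with $\alpha<4\pi$, I would first reduce to radial functions. Let $u^{*}$ be the symmetric decreasing rearrangement of $|u|$; by the P\'olya--Szeg\H{o} inequality $\|\nabla u^{*}\|_{L^2}\le\|\nabla u\|_{L^2}\le 1$, while $\|u^{*}\|_{L^2}=\|u\|_{L^2}$ and, since $t\mapsto e^{\alpha t}-1$ is increasing, $\|e^{\alpha|u^{*}|^2}-1\|_{L^1}=\|e^{\alpha|u|^2}-1\|_{L^1}$. Hence it suffices to prove \eqref{MT1} for $u$ radial, nonnegative and nonincreasing. Moreover, since in two dimensions both sides of \eqref{MT1} are homogeneous of degree $-2$ under the scaling $u\mapsto u(\lambda\,\cdot)$, which leaves $\|\nabla u\|_{L^2}$ unchanged, I may normalize $\|u\|_{L^2}=1$. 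Monotonicity then yields the pointwise decay $\pi r^2 u(r)^2\le\int_{B_r}|u|^2\le 1$, i.e. $u(r)\le(\sqrt{\pi}\,r)^{-1}$. I would then split $\R^2=B_1\cup(\R^2\setminus B_1)$. On $\R^2\setminus B_1$ one has $\alpha u^2\le\alpha/\pi$, so $e^{\alpha u^2}-1\le\alpha u^2 e^{\alpha/\pi}$ and $\int_{\R^2\setminus B_1}(e^{\alpha u^2}-1)\le\alpha e^{\alpha/\pi}\|u\|_{L^2}^2=\alpha e^{\alpha/\pi}$. On $B_1$, set $v:=(u-u(1))_{+}$; by monotonicity $u=v+u(1)$ on $B_1$, $v\in H^1_0(B_1)$ with $\|\nabla v\|_{L^2}\le\|\nabla u\|_{L^2}\le 1$, and $u(1)^2\le 1/\pi$. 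Choosing $\eps>0$ with $\alpha(1+\eps)\le 4\pi$ (possible since $\alpha<4\pi$) and using $(a+b)^2\le(1+\eps)a^2+(1+\eps^{-1})b^2$, one gets $\alpha u^2\le 4\pi v^2+\alpha(1+\eps^{-1})/\pi$ on $B_1$, whence $\int_{B_1}e^{\alpha u^2}\le e^{\alpha(1+\eps^{-1})/\pi}\int_{B_1}e^{4\pi v^2}\le C\pi\,e^{\alpha(1+\eps^{-1})/\pi}$ by the bounded-domain inequality. Adding the two contributions proves \eqref{MT1} in the normalized case, hence in general by undoing the scaling, with $c_\alpha=\alpha e^{\alpha/\pi}+C\pi e^{\alpha(1+\eps^{-1})/\pi}$.

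For the optimality, when $\alpha\ge 4\pi$ I would exhibit the standard concentrating family $w_n(x)=(2\pi)^{-1/2}\min\{(\log n)^{1/2},\,(\log n)^{-1/2}\log(1/|x|)\}$ for $|x|\le 1$, extended by $0$ for $|x|\ge 1$. A direct computation gives $\|\nabla w_n\|_{L^2(\R^2)}=1$ and $\|w_n\|_{L^2(\R^2)}\to 0$ as $n\to\infty$. On $B_{1/n}$ one has $w_n^2=(\log n)/(2\pi)$, hence $e^{\alpha w_n^2}-1=n^{\alpha/(2\pi)}-1$, and therefore $\|e^{\alpha w_n^2}-1\|_{L^1}\ge|B_{1/n}|\,(n^{\alpha/(2\pi)}-1)=\pi\bigl(n^{\alpha/(2\pi)-2}-n^{-2}\bigr)$, which stays bounded below by a positive constant for all large $n$ as soon as $\alpha\ge 4\pi$ (tending to $\pi$ when $\alpha=4\pi$ and to $+\infty$ when $\alpha>4\pi$); since $c_\alpha\|w_n\|_{L^2}^2\to 0$, no finite constant $c_\alpha$ can make \eqref{MT1} hold.

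I expect the main difficulty to be the first part: transferring Moser's bounded-domain estimate to the whole plane forces one to separate the far region, where rearrangement together with the radial decay $u(r)\lesssim\|u\|_{L^2}/r$ makes the nonlinearity effectively quadratic, from a compact core, and to arrange the truncation $v=(u-u(1))_{+}$ and the scaling normalization so that $\|u\|_{L^2}^2$ — rather than merely $|B_1|$ — appears on the right with the correct homogeneity. The blow-up of the constant $c_\alpha$ as $\eps^{-1}\to\infty$, i.e. as $\alpha\to 4\pi^{-}$, is exactly consistent with the failure of \eqref{MT1} at $\alpha=4\pi$ established in the second part.
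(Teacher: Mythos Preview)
The paper does not supply a proof of this proposition; it is quoted from Adachi--Tanaka \cite{AT} and used as a black box. So there is nothing in the paper to compare your argument against.

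That said, your argument is correct and is essentially the standard route (indeed, close in spirit to what Adachi--Tanaka do). The reduction to radial decreasing functions via P\'olya--Szeg\H{o}, the scaling normalization $\|u\|_{L^2}=1$, the split $B_1\cup(\R^2\setminus B_1)$ with the radial decay bound $u(r)\le(\sqrt{\pi}\,r)^{-1}$ controlling the exterior, and the truncation $v=(u-u(1))_+\in H^1_0(B_1)$ feeding into the classical bounded-domain Moser inequality on the core --- all of this is sound. The Young-type splitting $(a+b)^2\le(1+\eps)a^2+(1+\eps^{-1})b^2$ is exactly where the strict inequality $\alpha<4\pi$ is spent, and your observation that $c_\alpha$ blows up as $\alpha\to4\pi^-$ is consistent with the failure at $\alpha=4\pi$. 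For the sharpness part, the Moser sequence $w_n$ and the computation on $B_{1/n}$ are the textbook construction; your arithmetic checks out.

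One small cosmetic point: on $B_1$ you bound $\int_{B_1}e^{\alpha u^2}$ rather than $\int_{B_1}(e^{\alpha u^2}-1)$, but since the former dominates the latter this is harmless.
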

	\begin{rem}
		\label{rem} We point out that $\alpha=4\pi$ becomes admissible in
		(\ref{MT1}) if we require $\|u\|_{H^1(\R^2)}\leq1$ rather than
		$\|\nabla u\|_{L^2(\R^2)}\leq1$. Precisely, we have
		\beq
		\label{MT2}
		\sup_{\|u\|_{H^1}\leq
			1}\;\;\|\exp(4\pi |u|^2)-1\|_{L^1(\R^2)}<\infty,
		\eeq
		and this is
		false for $\alpha>4\pi$. See \cite{Ru} for more details.
	\end{rem}
	\begin{theo}\cite{Souza1}\label{theo-MT}
		Let $0< b <2$ and $0<\alpha<2\pi(2-b)$. Then, there exists a positive constant $C=C(b,\alpha)$ such that
		\begin{equation}
		\label{WMT1}
		\int_{\R^2}\frac{{\rm e}^{\alpha| u(x)|^2}-1}{| x|^b}dx\leqslant C\int_{\R^2}\frac{| u(x)|^2}{| x|^b}dx,
		\end{equation}
		for all $u\in H^1(\R^2)$ with $\| \nabla u\|_{L^2(\R^2)}\leqslant 1$.
	\end{theo}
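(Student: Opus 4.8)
The plan is to recover \eqref{WMT1} — which is the sharp singular (weighted) Moser--Trudinger inequality of de Souza \cite{Souza1}, the threshold $2\pi(2-b)$ being of Adimurthi--Sandeep type — by adapting Moser's symmetrization argument to the weight $|x|^{-b}$. First one reduces to radial non-increasing functions: replace $u$ by the rearrangement $u^\star$ adapted to the measure $d\mu=|x|^{-b}\,dx$, namely the radial non-increasing function with $\mu(\{|u^\star|>t\})=\mu(\{|u|>t\})$ for every $t>0$. Since $|u|$ and $|u^\star|$ are $\mu$-equimeasurable and $\tau\mapsto\mathrm e^{\alpha\tau}-1$ is increasing, \emph{both} integrals in \eqref{WMT1} are unchanged by $u\mapsto u^\star$, while the weighted P\'olya--Szeg\H{o} principle for $|x|^{-b}\,dx$ gives $\|\nabla u^\star\|_{L^2}\le\|\nabla u\|_{L^2}\le1$. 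Hence one may assume $u=u(r)$, $r=|x|$, non-increasing.

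Next one passes to the half-line: with $w(s)=u(\mathrm e^{-s})$ one has $\|\nabla u\|_{L^2}^2=2\pi\|w'\|_{L^2(\R)}^2\le1$ and
\[
\int_{\R^2}\frac{\mathrm e^{\alpha|u|^2}-1}{|x|^b}\,dx=2\pi\!\int_\R\big(\mathrm e^{\alpha w(s)^2}-1\big)\mathrm e^{-(2-b)s}\,ds,\qquad \int_{\R^2}\frac{|u|^2}{|x|^b}\,dx=2\pi\!\int_\R w(s)^2\,\mathrm e^{-(2-b)s}\,ds .
\]
One then splits $\R$ according to whether $w(s)^2\le M$ or $w(s)^2>M$ for a large fixed $M$. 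On $\{w^2\le M\}$ the convexity bound $\mathrm e^{\alpha\tau}-1\le\frac{\mathrm e^{\alpha M}-1}{M}\,\tau$ for $0\le\tau\le M$ already gives the pointwise estimate. On $\{w^2>M\}$ the variable $s$ is necessarily large, and since $w\to0$ at $-\infty$ the Cauchy--Schwarz bound $w(s)^2\le\frac{s}{2\pi}+o(s)$ forces $\alpha w(s)^2-(2-b)s\le-\big(2-b-\frac{\alpha}{2\pi}\big)s+o(s)\to-\infty$ — this is exactly where the hypothesis $\alpha<2\pi(2-b)$ enters — so that $\mathrm e^{\alpha w^2}\mathrm e^{-(2-b)s}$ is integrable over $\{w^2>M\}$ with a bound comparable to $\int_{\{w^2>M\}}w^2\,\mathrm e^{-(2-b)s}\,ds$. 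Adding the two regimes yields \eqref{WMT1}.

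The main obstacle is twofold. First, the rearrangement step relies on the weighted P\'olya--Szeg\H{o} / weighted isoperimetric inequality for $|x|^{-b}\,dx$ — one must dominate the Euclidean perimeter of a level set by that of the centered ball of equal $\mu$-mass — together with the usual density and approximation subtleties needed to make the symmetrization licit for an arbitrary $u\in H^1(\R^2)$. Second, and more essentially, the delicate part of the one-variable estimate is the regime $s\to+\infty$, i.e. near the origin, where the exponential $\mathrm e^{\alpha w^2}$ competes with the decaying weight $\mathrm e^{-(2-b)s}$; this balance is precisely what pins the critical value at $2\pi(2-b)$, and it must be run with a constant independent of $u$. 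Since \eqref{WMT1} with this sharp threshold is already available in \cite{Souza1}, in the sequel we simply invoke it.
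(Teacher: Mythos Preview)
The paper does not give its own proof of this theorem: it is stated with the citation \cite{Souza1} and used as a black box throughout. Your proposal ends the same way --- after sketching a strategy you explicitly write ``Since \eqref{WMT1} with this sharp threshold is already available in \cite{Souza1}, in the sequel we simply invoke it'' --- so at the level of what is actually proved, you and the paper agree.

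A brief comment on the sketch itself, since you went further than the paper. The second half (logarithmic change of variable $w(s)=u(\mathrm e^{-s})$, splitting into $\{w^2\le M\}$ and $\{w^2>M\}$, and reading off the threshold $2\pi(2-b)$ from the competition between $\alpha w^2$ and $(2-b)s$) is the right picture once one is reduced to radial non-increasing functions. The step that deserves more care is the reduction itself. You propose to rearrange with respect to the weighted measure $d\mu=|x|^{-b}\,dx$ so that both sides of \eqref{WMT1} are preserved, and then appeal to a ``weighted P\'olya--Szeg\H{o}'' for the \emph{unweighted} Dirichlet energy. That statement --- that the $\mu$-equimeasurable radial rearrangement does not increase $\|\nabla u\|_{L^2(dx)}$ --- is not the classical P\'olya--Szeg\H{o} (which rearranges with respect to Lebesgue measure) and requires its own isoperimetric input; you correctly flag it as an obstacle. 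An alternative route, closer in spirit to Adimurthi--Sandeep and to de~Souza's argument, is to use the \emph{standard} Schwarz symmetrization together with the radial change of variable $s=r^{(2-b)/2}$, which converts the weighted problem for radial functions into the unweighted one of Adachi--Tanaka (Proposition~\ref{prop-MT}) with exponent $\frac{2\alpha}{2-b}<4\pi$; the reduction to radial functions then has to be handled so that the right-hand side is controlled, not just the left, which is where the argument differs from a naive Hardy--Littlewood step. Since both you and the paper ultimately defer to \cite{Souza1}, these remarks are only about the heuristic you offered.
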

	We point out that $\alpha=2\pi(2-b)$ becomes admissible in
	(\ref{WMT1}) if we require $\|u\|_{H^1(\R^2)}\leq1$ instead of
	$\|\nabla u\|_{L^2(\R^2)}\leq1$. More precisely, we have
	\begin{theo}\cite{Souza2}\label{MTT}
		Let $0< b <2$. We have
		\begin{equation}
		\label{WMT2}
		\sup_{\| u\|_{H^1(\R^2)}\leqslant 1}\int_{\R^2}\frac{{\rm e}^{\alpha| u(x)|^2}-1}{| x|^b}dx<\infty\quad\mbox{if and only if}\quad \alpha\leq 2\pi(2-b).
		\end{equation}
	\end{theo}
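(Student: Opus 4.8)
The statement is an equivalence, so there are two implications to prove: that the supremum is $+\infty$ when $\alpha > 2\pi(2-b)$, and that it is finite when $\alpha \le 2\pi(2-b)$. Since $t\mapsto e^{\alpha t}-1$ is increasing in $\alpha\ge 0$, for the second implication it is enough to treat the borderline exponent $\alpha = 2\pi(2-b)$.

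\textbf{Necessity.} I would test with a concentrating family of Moser-type functions centred at the origin, where the weight $|x|^{-b}$ is strongest. Let $m_n$ be supported in the unit disc, equal to $(2\pi)^{-1/2}(\log n)^{1/2}$ on $\{|x|\le 1/n\}$ and to $(2\pi)^{-1/2}(\log n)^{-1/2}\log(1/|x|)$ on $\{1/n\le|x|\le1\}$. A direct computation gives $\|\nabla m_n\|_{L^2}^2 = 1$ and $\|m_n\|_{L^2}^2\to 0$, so $u_n := m_n/\|m_n\|_{H^1}$ has $\|u_n\|_{H^1}=1$ and $u_n^2 = \tfrac{\log n}{2\pi}(1+o(1))$ on $\{|x|\le1/n\}$. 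Since $\int_{|x|\le1/n}|x|^{-b}\,dx = \tfrac{2\pi}{2-b}\,n^{-(2-b)}$ (finite precisely because $b<2$), one obtains
\[
\int_{\R^2}\frac{e^{\alpha|u_n|^2}-1}{|x|^b}\,dx \ \ge\ \int_{|x|\le1/n}\frac{e^{\alpha|u_n|^2}-1}{|x|^b}\,dx \ \gtrsim\ n^{\frac{\alpha}{2\pi}-(2-b)},
\]
which tends to $+\infty$ exactly when $\alpha > 2\pi(2-b)$.

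\textbf{Sufficiency, $\alpha = 2\pi(2-b)$.} I would first reduce to radial data: the P\'olya--Szeg\H{o} inequality gives $\|u^*\|_{H^1}\le\|u\|_{H^1}$, and $\int\frac{e^{\alpha|u|^2}-1}{|x|^b}\le\int\frac{e^{\alpha|u^*|^2}-1}{|x|^b}$ because $|x|^{-b}$ is itself radially non-increasing, so the bathtub principle applies to each super-level set $\{|u|>\lambda\}$. So let $u=u(r)\ge0$ be radially non-increasing with $\|u\|_{H^1}\le1$, and split $\R^2=\{|x|\ge1\}\cup\{|x|\le1\}$. On the exterior $|x|^{-b}\le1$ and $\alpha\le4\pi$, hence $\int_{|x|\ge1}\frac{e^{\alpha u^2}-1}{|x|^b}\le\int_{\R^2}(e^{4\pi u^2}-1)\,dx$, which is bounded uniformly over the unit ball of $H^1$ by the classical sharp Moser--Trudinger inequality \eqref{MT2}. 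On the interior, polar coordinates and the substitution $r=e^{-t}$, $v(t):=u(e^{-t})$, turn the integral into
\[
\int_{|x|\le1}\frac{e^{\alpha u^2}-1}{|x|^b}\,dx \ =\ 2\pi\int_0^\infty\bigl(e^{\alpha v(t)^2}-1\bigr)e^{-(2-b)t}\,dt,
\]
where $v$ is non-decreasing, $v(0)=u(1)=:a$ with $\pi a^2\le\|u\|_{L^2}^2$, and $2\pi\int_0^\infty|\dot v|^2\,dt=\|\nabla u\|_{L^2(|x|\le1)}^2$; altogether $2\pi\int_0^\infty|\dot v|^2\,dt+\pi a^2\le\|u\|_{H^1}^2\le1$.

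\textbf{The core one-dimensional estimate --- the main obstacle.} What remains is to bound the last integral by a constant depending only on $b$, uniformly in $v$; this is where the criticality of $\alpha = 2\pi(2-b)$ is felt. Writing $c^2:=\int_0^\infty|\dot v|^2\,dt\le\tfrac{1-\pi a^2}{2\pi}$, Cauchy--Schwarz gives $v(t)\le a+c\sqrt t$, whence
\[
\alpha v(t)^2-(2-b)t \ \le\ \alpha a^2+2\alpha a c\sqrt t+\bigl(\alpha c^2-(2-b)\bigr)t \ \le\ \alpha a^2+2\alpha a c\sqrt t-(2-b)\pi a^2\,t,
\]
using $\alpha c^2\le(2-b)(1-\pi a^2)$. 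When $a$ is bounded away from $0$ the last term supplies a decay rate bounded below, and together with the bounded overshoot this already gives a uniform bound on the integral. The delicate regime is $a$ small, where this bound degenerates to merely $\alpha v(t)^2-(2-b)t\le0$, which is not integrable. Here one must invoke \emph{Moser's lemma}: for $\psi:=v-a$ (so $\psi(0)=0$ and $2\pi\int_0^\infty|\dot\psi|^2\le1$) the energy constraint prevents $2\pi\psi(t)^2$ from staying close to $t$ over a long $t$-range; quantitatively, the set $\{t:2\pi\psi(t)^2>t-L\}$ is confined to a bounded interval whose length grows only polynomially in $L$. Because $\alpha=2\pi(2-b)$ makes $\alpha\psi^2-(2-b)t$ equal to $(2-b)(2\pi\psi^2-t)$, a layer-cake computation then yields $\int_0^\infty e^{\alpha\psi(t)^2-(2-b)t}\,dt\le C_b$; and, $a$ being small, the cross term $e^{2\alpha a\psi(t)}\le e^{2\alpha a c\sqrt t}$ is harmless against this exponential decay, so the interior integral is bounded. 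I expect this 1D lemma, proved by a dyadic energy-budget argument, to be the technical core of the proof, the reductions above being routine.
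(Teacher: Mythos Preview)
The paper does not prove this theorem at all: it is quoted from de~Souza--do~\'O \cite{Souza2} and used as a black box, so there is no ``paper's own proof'' to compare against. Your outline follows the standard route by which such sharp singular Moser--Trudinger inequalities are established (concentrating Moser sequences for necessity; symmetric decreasing rearrangement, the exponential change of variable $r=e^{-t}$, and a one-dimensional Moser-type lemma for sufficiency), and the reductions you describe are correct, including the Hardy--Littlewood step $\int(e^{\alpha|u|^2}-1)|x|^{-b}\le\int(e^{\alpha|u^*|^2}-1)|x|^{-b}$ and the computation of the interior integral in the $t$-variable.

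One point deserves more care. In the critical 1D estimate you split $v=a+\psi$ and assert that the cross term $e^{2\alpha a\psi(t)}$ is ``harmless against this exponential decay''. As written this is not yet justified: Moser's lemma gives $\int_0^\infty e^{\alpha\psi^2-(2-b)t}\,dt\le C_b$ under the constraint $2\pi\int|\dot\psi|^2\le1$, but multiplying the integrand by $e^{2\alpha a c\sqrt t}$ destroys integrability unless one exploits the extra slack in the energy, namely $2\pi\int|\dot\psi|^2\le 1-\pi a^2$ rather than $\le1$. The correct argument (as in Ruf and in de~Souza--do~\'O) uses precisely this gap: one rescales to a function with unit Dirichlet energy, and the deficit $\pi a^2$ produces a genuinely negative linear term $-(2-b)\pi a^2\,t$ after completing the square, uniformly in small $a$. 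Your earlier inequality already contains this term; the point is that it, and not Moser's lemma alone, is what controls the cross term. Once this bookkeeping is made explicit the proof goes through.
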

	The following lemma will be very useful.
	\begin{lem}
		\label{hardy}
		Let  $0< b <2$ and $\gamma \geq 2$. Then, there exists a positive constant $C=C(b,\gamma)>0$ such that
		\begin{equation}
		\label{Hardy}
		\int_{\R^2}\frac{|u(x)|^{\gamma}}{|x|^b} dx\leq\,C \|u\|_{H^1(\R^2)}^{\gamma},
		\end{equation}
		for all $u\in H^1(\R^2)$.
	\end{lem}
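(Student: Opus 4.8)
The plan is to prove Lemma \ref{hardy} by combining a suitable splitting of $\R^2$ near and away from the origin with the Sobolev embedding $H^1(\R^2) \hookrightarrow L^q(\R^2)$ valid for all $2 \le q < \infty$, together with H\"older's inequality applied carefully so that the weight $|x|^{-b}$ lands in a Lebesgue space on which it is locally (resp. globally) integrable. Concretely, I would first normalize and reduce: by homogeneity in $u$ it suffices to bound the left-hand side by $C\|u\|_{H^1}^\gamma$ for a fixed $u$, so no genuine normalization is needed, but it clarifies the bookkeeping to track how each power of $\|u\|_{H^1}$ is produced. Write $\int_{\R^2} |x|^{-b}|u|^\gamma\,dx = \int_{|x|\le 1} + \int_{|x|>1}$.

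For the inner piece $\int_{|x|\le 1} |x|^{-b}|u|^\gamma\,dx$, I would apply H\"older with exponents $p$ and $p'$ where $p'$ is chosen close to $1$ so that $bp' < 2$, i.e. $p' < 2/b$; since $0<b<2$ this is always possible, e.g. $p' = \frac{2}{b+\varepsilon}$ for small $\varepsilon$ when $b \ge 1$, or simply any $p'>1$ when $b<1$ makes $|x|^{-b} \in L^{p'}(|x|\le 1)$ automatically when $b<1$; in general pick $p'$ with $1<p'<2/b$. Then $|x|^{-b} \in L^{p'}(\{|x|\le1\})$ with norm depending only on $b,p'$, and $|u|^\gamma \in L^{p}(\{|x|\le 1\})$ is controlled by $\|u\|_{L^{\gamma p}(\R^2)}^\gamma \lesssim \|u\|_{H^1}^\gamma$ because $\gamma p \ge \gamma \ge 2$ and $\gamma p < \infty$, so the Sobolev embedding applies. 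For the outer piece $\int_{|x|>1} |x|^{-b}|u|^\gamma\,dx$, simply bound $|x|^{-b} \le 1$ on $\{|x|>1\}$ and estimate by $\int_{\R^2} |u|^\gamma\,dx = \|u\|_{L^\gamma}^\gamma \lesssim \|u\|_{H^1}^\gamma$, again using $\gamma \ge 2$ and the Sobolev embedding $H^1(\R^2)\hookrightarrow L^\gamma(\R^2)$. Adding the two contributions yields \eqref{Hardy}.

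The only mild subtlety — and the place I would be most careful — is the choice of H\"older exponent near the origin: one must make sure that $p'$ can be taken strictly below $2/b$ while keeping $p = (p')' < \infty$, which forces $p' > 1$; both constraints $1 < p' < 2/b$ are simultaneously satisfiable precisely because $b < 2$. This is exactly the hypothesis $0<b<2$ in the statement. After that, everything is a routine application of H\"older and Sobolev, and the constant $C$ depends only on $b$ and $\gamma$ (through the fixed choice of $p'$ and the Sobolev constant for the exponent $\gamma p$), as claimed. I would present the argument in two short displayed chains of inequalities, one for each region, and then combine.

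A remark on alternatives: one could instead invoke directly that $|x|^{-b} \in L^{2/b,\infty}(\R^2)$ (the Lorentz space, as the authors note they will use elsewhere) and apply the H\"older inequality in Lorentz spaces together with $H^1(\R^2) \hookrightarrow L^{q,2}(\R^2) \subset L^{q,r}$; this is cleaner conceptually but requires the Lorentz-space machinery, whereas the region-splitting proof above is elementary and self-contained. For the purposes of this lemma I would favor the elementary splitting argument.
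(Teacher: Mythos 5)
Your proof is correct and coincides with the paper's argument: split over the unit ball and its complement, bound the outer piece via $|x|^{-b}\le 1$ and Sobolev, and handle the inner piece by H\"older with a conjugate exponent $p'$ chosen in $(1,2/b)$ so that $|x|^{-b}\in L^{p'}(B)$ (the paper writes $p'=1+\eps$), again closing with Sobolev. One small parenthetical slip: for $b<1$ it is \emph{not} true that $|x|^{-b}\in L^{p'}(B)$ for every $p'>1$ (one still needs $p'<2/b$), but this is immaterial since you immediately state the correct general condition $1<p'<2/b$.
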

	\begin{proof}
		Note that
		\begin{equation}
		\label{integ}
		\||x|^{-b}\|_{L^r(B)} <\infty\;\;\mbox{if}\;\; b<\frac{2}{r},\quad \||x|^{-b}\|_{L^r(B^c)} <\infty\;\; \mbox{if}\;\; b>\frac{2}{r},
		\end{equation}
		where $B=B(0,1)$ is the unit ball in $\R^2$ and $B^c=\R^2\backslash B$. Write
		\[
		\int_{\R^2} |x|^{-b} |u(x)|^\gamma dx = \int_{B} |x|^{-b} |u(x)|^\gamma dx + \int_{B^c} |x|^{-b} |u(x)|^\gamma dx.
		\]
		We have from the Sobolev embedding $H^1(\R^2) \subset L^q(\R^2)$ for any $q \in[2,\infty)$ that
		\[
		\int_{B^c} |x|^{-b} |u(x)|^\gamma dx \leq \|u\|^\gamma_{L^{\gamma}(\R^2)} \lesssim \|u\|^\gamma_{H^1(\R^2)}.
		\]
		The first term is estimated as follows. Since $0<b<2$, there exists $\eps>0$ small such that $b<\frac{2}{1+\eps}$. We apply \eqref{integ} with $r=1+\eps$ and get
		\[
		\int_{B} |x|^{-b} |u(x)|^\gamma dx \leq \||x|^{-b}\|_{L^{1+\eps}(B)} \||u|^\gamma\|_{L^{\frac{1+\eps}{\eps}}(\R^2)} \lesssim \|u\|^\gamma_{L^{\frac{(1+\eps)\gamma}{\eps}}(\R^2)} \lesssim \|u\|^\gamma_{H^1(\R^2)}.
		\]
		Combining the two terms, we prove the desired estimate.
	\end{proof}
	\begin{rem}
		The inequality \eqref{Hardy} fails for $b\geqslant2$. Indeed, let $u\in \mathcal{D}(\R^2)$ (the space of smooth compactly supported functions) be a radial function such that
		$u(x)\equiv1$ for $| x| \leqslant 1$. Then, $u\in H^1(\R^2)$ and
		$$\int_{\R^2}\frac{| u(x)|^\gamma}{| x|^b} dx\geqslant 2\pi \int_0^1\frac{rdr}{r^b}=+\infty.$$
	\end{rem}
	We also recall the so-called Gagliardo-Nirenberg inequalities and Sobolev embedding.
	\begin{prop}[\sf Gagliardo-Nirenberg inequalities \cite{Gag, Nir}] \quad\\
		\label{GN}
		We have
		\begin{equation}
		\label{GaNi}
		\|u\|_{L^{m+1}}\lesssim\, \|u\|_{L^{q+1}}^{1-\theta}\, \|\nabla u\|_{L^{p}}^\theta,
		\end{equation}
		where
		$$
		\theta=\frac{pN(m-q)}{(m+1)[N(p-q-1)+p(q+1)]}, \;\;\; 0\leq q <\sigma-1,\;\;\; q<m<\sigma,
		$$
		\begin{eqnarray*}
			\sigma&=&\; \left\{
			\begin{array}{cllll} \frac{(p-1)N+p}{N-p}\quad&\mbox{if}&\quad
				p<N\\ \infty \quad
				&\mbox{if}&\quad  p\geq N
			\end{array}
			\right.
		\end{eqnarray*}
	\end{prop}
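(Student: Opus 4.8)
The plan is to prove \eqref{GaNi} by the standard scaling-plus-interpolation argument due to Gagliardo and Nirenberg, exploiting the dimensional homogeneity that dictates the exponent $\theta$. First I would check that $\theta$ as prescribed indeed lies in $[0,1]$ under the stated constraints $0 \le q < \sigma - 1$ and $q < m < \sigma$; this is a purely algebraic verification using the formula for $\sigma$ in the two cases $p < N$ and $p \ge N$, and it guarantees that the right-hand side of \eqref{GaNi} is well-defined and that the interpolation below makes sense. I would also record the scaling identity: for $u_\lambda(x) := u(\lambda x)$ one has $\|u_\lambda\|_{L^{m+1}} = \lambda^{-N/(m+1)} \|u\|_{L^{m+1}}$, $\|u_\lambda\|_{L^{q+1}} = \lambda^{-N/(q+1)} \|u\|_{L^{q+1}}$, and $\|\nabla u_\lambda\|_{L^p} = \lambda^{1 - N/p} \|\nabla u\|_{L^p}$; substituting into a putative inequality of the form $\|u\|_{L^{m+1}} \lesssim \|u\|_{L^{q+1}}^{1-\theta}\|\nabla u\|_{L^p}^\theta$ and demanding scale-invariance forces precisely the displayed value of $\theta$, which both fixes the exponent and shows the inequality cannot hold with any other one.

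The core of the argument is the one-dimensional case $N = 1$ (or a slicing reduction to it), followed by a tensorization over the $N$ coordinate directions. Concretely, for a smooth compactly supported $u$ on the line one writes $|u(x)|^s = s\int_{-\infty}^x |u(t)|^{s-1}\operatorname{sgn}(u)\,u'(t)\,dt$ and estimates the left side by $s\||u|^{s-1}\|_{L^{p'_*}}\|u'\|_{L^{p_*}}$ for a suitable conjugate pair, then iterates/interpolates to distribute the derivative; in higher dimensions one applies this in each variable $x_j$ with the remaining variables frozen, takes the geometric mean over $j = 1,\dots,N$, and integrates using the Loomis--Whitney / generalized Hölder inequality to recombine the mixed-norm factors into $\|u\|_{L^{q+1}}^{1-\theta}\|\nabla u\|_{L^p}^\theta$. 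A cleaner packaging, which I would likely adopt, is: (i) first establish the endpoint Sobolev-type bound $\|u\|_{L^{p^*}} \lesssim \|\nabla u\|_{L^p}$ when $p < N$ with $p^* = Np/(N-p)$ by the Gagliardo--Nirenberg--Sobolev slicing argument, or the Morrey-type bound when $p > N$; (ii) then obtain \eqref{GaNi} by Hölder interpolation between $L^{q+1}$ and $L^{p^*}$ (resp. between $L^{q+1}$ and $L^\infty$), since $m+1$ lies strictly between $q+1$ and the relevant critical exponent precisely when $q < m < \sigma$. The interpolation weight coming out of Hölder then has to be matched with $\theta$; by the scaling computation of the first paragraph it automatically is.

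Finally I would remove the smoothness and compact-support assumptions by a standard density and truncation argument: approximate a general $u$ with $\nabla u \in L^p$ and $u \in L^{q+1}$ by $C_c^\infty$ functions in both norms simultaneously (mollify and multiply by cutoffs $\chi(x/R)$, controlling the error term $R^{-1}\|\chi'(\cdot/R)u\|_{L^p}$ via $u \in L^{q+1}$ and Hölder on the annulus, using $q < m < \sigma$ to see this vanishes as $R \to \infty$), and pass to the limit using Fatou on the left and norm convergence on the right. The main obstacle is the combinatorial bookkeeping in step (i)--(ii): one must verify that the parameter ranges $0 \le q < \sigma - 1$ and $q < m < \sigma$ are exactly what is needed for (a) the interpolation exponent to land in $(0,1)$ and (b) the critical Sobolev exponent to dominate $m+1$, and that the resulting Hölder weight coincides with the stated $\theta$; the two cases $p < N$ and $p \ge N$ must be handled separately, with $\sigma = \infty$ in the latter corresponding to interpolation against $L^\infty$ via Morrey's embedding. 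Everything else is routine, and the scaling heuristic of the first paragraph serves as a running consistency check that the algebra has been done correctly.
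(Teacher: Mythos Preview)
The paper does not prove this proposition: it is stated as a classical result and attributed to \cite{Gag, Nir}, with no argument given. So there is no ``paper's own proof'' to compare against; the authors simply invoke the inequality and record the special case $N=2$ that they need.

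Your sketch is the standard route to the Gagliardo--Nirenberg inequalities and is essentially correct: determine $\theta$ by scaling, establish the endpoint Sobolev/Morrey embedding, and then interpolate between $L^{q+1}$ and the critical space. One small caveat worth flagging in your density step: to pass from $C_c^\infty$ to general $u$ you need the \emph{homogeneous} Sobolev inequality $\|u\|_{L^{p^*}} \lesssim \|\nabla u\|_{L^p}$ (no lower-order term), and the truncation argument you describe for controlling $R^{-1}\|\chi'(\cdot/R)u\|_{L^p}$ via $u\in L^{q+1}$ requires $q+1\ge p$ (or an additional integrability assumption) for H\"older to apply on the annulus; in the full generality of the stated parameter ranges this step needs a bit more care (e.g.\ working in $\dot W^{1,p}\cap L^{q+1}$ directly and using the completion). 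This is a minor technical point and does not affect the overall strategy.
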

	In particular, for $N=2$, we obtain
	\begin{equation}
	\label{GaNi2D}
	\|u\|_{L^q}\lesssim\, \|u\|_{L^2}^{2/q}\,\, \|\nabla u\|_{L^2}^{1-2/q},\quad 2\leq\,q<\infty.
	\end{equation}
	\begin{prop}[\sf Sobolev embeddings] \quad\\
		\label{SobEmb}
		We have
		\beq
		\label{Sob}
		W^{s,p}(\R^N)\hookrightarrow L^q(\R^N),\quad 1\leq p<\infty,\quad 0\leq s<\frac{N}{p},\quad \frac{1}{p}-\frac{s}{N}\leq \frac{1}{q}\leq \frac{1}{p}.
		\eeq
		\beq
		\label{Sobb}
		W^{1,p}(\R^N)\hookrightarrow \mathcal{C}^{1-{N\over p}}(\R^N),\quad p>N.
		\eeq
	\end{prop}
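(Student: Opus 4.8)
Both inclusions are classical; we indicate the routes we would follow.

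\textbf{The first embedding.} We would prove the endpoint exponent $\frac1q=\frac1p-\frac sN$ first, and then recover the whole range $\frac1p-\frac sN\le\frac1q\le\frac1p$ by interpolating with the trivial inclusion $W^{s,p}\hookrightarrow L^p$. For integer $s=1$ and $1\le p<N$ the endpoint is the Gagliardo--Nirenberg--Sobolev inequality $\|u\|_{L^{p^*}}\lesssim\|\nabla u\|_{L^p}$ with $p^*=\frac{Np}{N-p}$; one establishes it first for $p=1$ by writing, for each $i$, $|u(x)|\le\int_{\R}|\partial_i u|\,dx_i$ (valid by density of $\mathcal D(\R^N)$), multiplying these $N$ bounds and integrating successively in $x_1,\dots,x_N$ with the Loomis--Whitney (generalized Hölder) inequality, which yields $\|u\|_{L^{N/(N-1)}}\lesssim\prod_{i}\|\partial_i u\|_{L^1}^{1/N}\lesssim\|\nabla u\|_{L^1}$; the case $1<p<N$ then follows by applying the $p=1$ estimate to $|u|^{\gamma}$ with $\gamma=\frac{p(N-1)}{N-p}$ and Hölder's inequality. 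Iterating the $s=1$ statement, lowering $\frac1q$ by $\frac1N$ at each step and collecting the intermediate spaces $W^{1,r}$ that appear, produces the integer case $s\ge1$ with all admissible $q$. For non-integer $s$ we would instead use the Bessel-potential description $W^{s,p}=\{(1-\Delta)^{-s/2}f:\ f\in L^p\}$ together with the pointwise domination $|(1-\Delta)^{-s/2}f|\lesssim I_s|f|$ by the Riesz potential $I_sg(x)=\int_{\R^N}|x-y|^{s-N}g(y)\,dy$, and conclude via the Hardy--Littlewood--Sobolev inequality $\|I_sg\|_{L^q}\lesssim\|g\|_{L^p}$, valid for $1<p$ and $\frac1q=\frac1p-\frac sN$; interpolation with $L^p$ again fills out the stated range of $q$.

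\textbf{The second embedding (Morrey).} For $p>N$ the assertion is that $u$ has a continuous representative obeying $|u(x)-u(y)|\lesssim\|\nabla u\|_{L^p}\,|x-y|^{1-N/p}$. We would run the classical argument: for $x,y$ with $|x-y|=r$, choose a ball $B$ of radius comparable to $r$ containing both points and set $u_B=|B|^{-1}\int_B u$; averaging the identity $u(z)-u(w)=-\int_0^1\nabla u\big(w+t(z-w)\big)\cdot(z-w)\,dt$ over $w\in B$ gives $|u(z)-u_B|\lesssim\int_B|z-w|^{1-N}|\nabla u(w)|\,dw$, and Hölder's inequality in the exponent $p$ bounds the right-hand side by $\|\nabla u\|_{L^p(B)}\,\bigl\|\,|z-\cdot|^{1-N}\,\bigr\|_{L^{p'}(B)}\lesssim\|\nabla u\|_{L^p}\,r^{1-N/p}$, where the kernel lies in $L^{p'}(B)$ precisely because $(N-1)p'<N$, i.e. $p>N$. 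Applying this to $z=x$ and $z=y$ and adding yields the Hölder seminorm bound; together with the uniform bound furnished by $W^{1,p}\hookrightarrow L^\infty$ (the $q=\infty$ case of the first embedding with $s=1$), this gives $W^{1,p}\hookrightarrow\mathcal C^{1-N/p}$.

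\textbf{Main obstacle.} The only steps that go beyond Hölder's inequality, interpolation and bookkeeping are the Gagliardo--Nirenberg--Sobolev inequality (for integer $s$) and, equivalently, the Hardy--Littlewood--Sobolev inequality (for fractional $s$). Since both are standard, in the paper we would simply cite the references rather than reproduce their proofs.
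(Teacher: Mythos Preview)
Your sketch is correct and follows the classical routes, but note that the paper does not actually prove this proposition: it is stated in Section~\ref{S2} as a standard tool without any argument or even a reference, so there is nothing to compare against. Your outline is therefore strictly more than what the paper supplies.

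One small slip worth flagging: at the end of the Morrey argument you invoke ``$W^{1,p}\hookrightarrow L^\infty$ (the $q=\infty$ case of the first embedding with $s=1$)'', but the first embedding as stated requires $0\le s<N/p$, which fails for $s=1$ when $p>N$. The $L^\infty$ bound in that regime is not a special case of \eqref{Sob}; rather it is part of Morrey's theorem itself and follows directly from the oscillation estimate you already proved (take any fixed ball $B$ and bound $|u(z)|\le |u_B|+|u(z)-u_B|\lesssim \|u\|_{L^p}+\|\nabla u\|_{L^p}$). This is a bookkeeping issue, not a genuine gap.
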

	The following estimate is an $L^\infty$ logarithmic inequality
	which enables us to establish the link between $ \|{\rm e}^{4\pi
		|u|^2}-1\|_{L^1_T(L^2(\R^2))} $ and dispersion properties of
	solutions of the linear Schr\"odinger equation.
	\begin{prop}[\sf Log estimate \cite{IMM}]\quad\\
		\label{LogEst}
		Let $0<\beta<1$. For any $\lambda>\frac{1}{2\pi\beta}$ and
		any $0<\mu\leq1$, a constant $C_{\lambda}>0$ exists such
		that,
		for any function $u\in H^1(\R^2)\cap{\mathcal C}^\beta(\R^2)$, we have
		\begin{equation}
		\label{Log}
		\|u\|^2_{L^\infty}\leq
		\lambda\|u\|_\mu^2 \log\left(C_{\lambda} +
		\frac{8^\beta\mu^{-\beta}\|u\|_{{\mathcal C}^{\beta}}}{\|u\|_\mu}\right),
		\end{equation}
		where 
		\begin{equation}
		\label{Hmu}
		\|u\|_\mu^2:=\|\nabla u\|_{L^2}^2+\mu^2\|u\|_{L^2}^2.
		\end{equation}
	\end{prop}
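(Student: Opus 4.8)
\medskip
\noindent\emph{Proof proposal for Proposition~\ref{LogEst}.} The estimate is of Brezis--Gallouet type: the inhomogeneous $H^1(\R^2)$ norm $\|\cdot\|_\mu$ controls $\|u\|_{L^\infty}$ up to a logarithmic loss, the logarithm being fed by the extra Hölder regularity $u\in\mathcal{C}^\beta$. Since both sides of \eqref{Log} are homogeneous of degree $2$ in $u$ (the argument of the logarithm being scale invariant), I may assume $\|u\|_\mu=1$ and then must prove $\|u\|_{L^\infty}^2\le\lambda\log\!\big(C_\lambda+8^\beta\mu^{-\beta}A\big)$, where $A:=\|u\|_{\mathcal{C}^\beta}\ (\ge\|u\|_{L^\infty})$.

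The plan is to split $u=u_{<R}+u_{\ge R}$ in frequency at a scale $R\ge\mu$ still to be optimized. For the low part, Fourier inversion gives $\|u_{<R}\|_{L^\infty}\le(2\pi)^{-2}\|\widehat u\,\|_{L^1(|\xi|<R)}$; I would split the frequency region into $\{|\xi|\le\mu\}$ and $\{\mu\le|\xi|<R\}$, apply Cauchy--Schwarz on each (against $(\pi\mu^2)^{1/2}$ on the first, and against the weight $|\xi|^{-1}$ on the second, using the exact radial computation $\big\||\xi|^{-1}\big\|_{L^2(\mu\le|\xi|<R)}^2=2\pi\log(R/\mu)$), and invoke Plancherel, $\|\widehat u\,\|_{L^2}=2\pi\|u\|_{L^2}$ and $\big\||\xi|\widehat u\,\big\|_{L^2}=2\pi\|\nabla u\|_{L^2}$. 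This produces $\|u_{<R}\|_{L^\infty}\le\big(\tfrac{\log(R/\mu)}{2\pi}\big)^{1/2}\|\nabla u\|_{L^2}+c_0\,\mu\|u\|_{L^2}\le\big(\tfrac{\log(R/\mu)}{2\pi}\big)^{1/2}+c_0$ with $c_0$ absolute. For the high part, the Littlewood--Paley/Besov characterization $\mathcal{C}^\beta(\R^2)=B^\beta_{\infty,\infty}(\R^2)$ together with a geometric sum yields $\|u_{\ge R}\|_{L^\infty}\le c_\beta R^{-\beta}A$.

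It then remains to choose $R$. If $\mu^{-\beta}A<1$, the function is already ``smooth enough'': $\|u\|_{L^\infty}\le A\le\mu^{-\beta}A<1\le\lambda\log C_\lambda$ as soon as $C_\lambda$ is large, and we are done. Otherwise I take $R:=8A^{1/\beta}\ (\ge\mu)$, so that $c_\beta R^{-\beta}A=c_\beta 8^{-\beta}=:\delta$ is a harmless constant and $\log(R/\mu)=\tfrac1\beta\log(8^\beta\mu^{-\beta}A)$ exactly --- this is where the factor $8^\beta$ in \eqref{Log} comes from. Then $\|u\|_{L^\infty}\le\big(\tfrac{\log(R/\mu)}{2\pi}\big)^{1/2}+(c_0+\delta)$; squaring and using $2ab\le\eps a^2+\eps^{-1}b^2$ to absorb the cross term (the one place where care is needed, so as not to lose a spurious multiplicative factor) gives $\|u\|_{L^\infty}^2\le(1+\eps)\tfrac{\log(R/\mu)}{2\pi}+C(\eps)=(1+\eps)\tfrac1{2\pi\beta}\log(8^\beta\mu^{-\beta}A)+C(\eps)$. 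Given $\lambda>\tfrac1{2\pi\beta}$, I fix $\eps$ small enough that $(1+\eps)\tfrac1{2\pi\beta}<\lambda$, and then $C_\lambda$ large enough that the additive constant $C(\eps)$ is dominated by $\lambda\log(C_\lambda+8^\beta\mu^{-\beta}A)-(1+\eps)\tfrac1{2\pi\beta}\log(8^\beta\mu^{-\beta}A)\ \ge\ \big(\lambda-(1+\eps)\tfrac1{2\pi\beta}\big)\log C_\lambda$; this establishes \eqref{Log}.

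The only real obstacle is obtaining the \emph{sharp} constant $\tfrac1{2\pi\beta}$ rather than merely some constant (a qualitative logarithmic bound being routine): one must keep the exact $2\pi$ factors from the Fourier normalization and the precise value of $\big\||\xi|^{-1}\big\|_{L^2}^2$ on the annulus, dispose of the cross term and all lower-order contributions without degrading the leading coefficient --- whence the strict inequality $\lambda>\tfrac1{2\pi\beta}$ and the free constant $C_\lambda$ --- and handle the Besov constant $c_\beta$ and the optimizing frequency scale cleanly, which is exactly what the explicit weight $8^\beta\mu^{-\beta}$ built into the statement is there to absorb.
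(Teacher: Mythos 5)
The paper does not give a proof of Proposition~\ref{LogEst}: the estimate is quoted from \cite{IMM}, and the text explicitly refers there for the proof, so there is no internal argument to compare against.

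Your proposal is the standard Brezis--Gallouet mechanism and is essentially correct. The homogeneity reduction to $\|u\|_\mu=1$, the annular split $\{|\xi|\le\mu\}\cup\{\mu\le|\xi|\le R\}$ with Cauchy--Schwarz, the exact value $\||\xi|^{-1}\|_{L^2(\mu\le|\xi|\le R)}^2=2\pi\log(R/\mu)$ feeding the sharp leading coefficient, the Besov bound $\|u_{\ge R}\|_{L^\infty}\lesssim R^{-\beta}\|u\|_{\mathcal{C}^\beta}$, the choice $R\sim A^{1/\beta}$, and the final absorption of the additive error via the strict inequality $\lambda>\tfrac1{2\pi\beta}$ together with the freedom in $C_\lambda$ (using $\lambda\log(C_\lambda+y)-c\log y\ge(\lambda-c)\log C_\lambda$ for $y\ge1$ and $c<\lambda$) are all sound; the complementary case $\mu^{-\beta}A<1$ is disposed of directly. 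The only point to be explicit about is the meaning of the frequency truncation: the high-frequency Besov estimate requires a \emph{smooth} Littlewood--Paley cutoff $S_j=\chi(D/2^j)$, since a sharp spectral cutoff $\mathbf{1}_{|\xi|\ge R}$ is not a bounded multiplier on $L^\infty$ or $\mathcal{C}^\beta$ in dimension two. Taking $\chi$ supported in the unit ball makes $S_j u$ have Fourier support in $\{|\xi|\le 2^j\}$, so your $L^1_\xi$ bound on the low part holds literally at dyadic $R$, and passing to the nearest dyadic scale only shifts $\log(R/\mu)$ by an absolute additive constant, which is harmless for the coefficient $\tfrac1{2\pi\beta}$ and is absorbed into $C_\lambda$. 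With this reading the proof is complete, and it reflects the mechanism used in \cite{IMM}.
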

	Recall that ${\mathcal C}^{\beta}(\R^2)$ denotes the space of
	$\beta$-H\"older continuous functions endowed with the norm $$
	\|u\|_{{\mathcal
			C}^{\beta}(\R^2)}:=\|u\|_{L^\infty(\R^2)}+\sup_{x\neq
		y}\frac{|u(x)-u(y)|}{|x-y|^{\beta}}. $$
	We refer to  \cite{IMM} for the proof of this proposition and more
	details. We just point out that the condition  $ \lambda>
	\frac{1}{2\pi\beta}$ in (\ref{Log}) is optimal.
	
	We also recall the so-called Strichartz estimates. We say that $(q,r)$ is an $L^2$-admissible pair if
	\begin{equation}
	\label{admiss}
	0\leq\frac{2}{q}=1-\frac{2}{r}<1.
	\end{equation}
	In particular, note that $(\frac{2}{1-2\sigma},\frac{1}{\sigma})$ is an admissible pair for any $0<\sigma<1/2$ and $${W}^{1,\frac{1}{\sigma}}(\R^2)\hookrightarrow {\mathcal C}^{1-2\sigma}(\R^2).$$
	\begin{prop}[\sf Strichartz estimates \cite{Ca}]\quad\\
		\label{Strich} Let $I\subset\R$ be a time interval and let $t_0\in I$. Then, for any admissible pairs $(q,r)$ and $(\tilde{q}, \tilde{r})$, we have
		\begin{equation}
		\label{stri} \|v\|_{L^q(I,{W}^{1,r}(\R^2))}\lesssim\,\|v(t_0)\|_{H^1(\R^2)}+\| i \partial_t v+\Delta v\|_{L^{\tilde{q}'}(I,W^{1,\tilde{r}'}(\R^2))}.
		\end{equation}
	\end{prop}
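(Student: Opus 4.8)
\emph{Proof strategy.} The plan is to derive \eqref{stri} from the three standard ingredients of Strichartz theory for the free Schr\"odinger group on $\R^2$ --- the dispersive estimate, a $TT^*$ argument, and the Christ--Kiselev lemma --- and then to upgrade from the $L^2$ level to the $H^1$ level by commuting with $\langle\nabla\rangle$.

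First I would write Duhamel's formula. Putting $F:=i\partial_t v+\Delta v$, one has $\partial_t v=i\Delta v-iF$, hence, for $t\in I$,
\[
v(t)=e^{i(t-t_0)\Delta}v(t_0)-i\int_{t_0}^t e^{i(t-s)\Delta}F(s)\,ds.
\]
By linearity it then suffices to establish the homogeneous bound $\|e^{i(\cdot-t_0)\Delta}v(t_0)\|_{L^q(I,W^{1,r})}\lesssim\|v(t_0)\|_{H^1}$ together with the retarded inhomogeneous bound $\big\|\int_{t_0}^{\cdot}e^{i(\cdot-s)\Delta}F(s)\,ds\big\|_{L^q(I,W^{1,r})}\lesssim\|F\|_{L^{\tilde q'}(I,W^{1,\tilde r'})}$.

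For the homogeneous estimate I would run the $TT^*$ argument. The key input is the dispersive bound $\|e^{i\tau\Delta}\phi\|_{L^r(\R^2)}\lesssim|\tau|^{-(1-2/r)}\|\phi\|_{L^{r'}(\R^2)}$, obtained by interpolating the unitarity of $e^{i\tau\Delta}$ on $L^2$ with the $L^1\!\to\!L^\infty$ decay $\|e^{i\tau\Delta}\phi\|_{L^\infty}\lesssim|\tau|^{-1}\|\phi\|_{L^1}$; observe that $1-2/r=2/q$ by the admissibility relation \eqref{admiss}. Writing $T\phi(t):=e^{it\Delta}\phi$, one has $TT^*G(t)=\int_\R e^{i(t-s)\Delta}G(s)\,ds$, and combining the dispersive bound with the one-dimensional Hardy--Littlewood--Sobolev inequality in the time variable --- which is legitimate precisely because $0<2/q<1$ --- yields $\|TT^*G\|_{L^q(\R,L^r)}\lesssim\|G\|_{L^{q'}(\R,L^{r'})}$. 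Hence $\|T\|_{L^2\to L^qL^r}\lesssim1$ and, by duality, $\|T^*G\|_{L^2}=\big\|\int_\R e^{-is\Delta}G(s)\,ds\big\|_{L^2}\lesssim\|G\|_{L^{\tilde q'}L^{\tilde r'}}$; the pair $(\infty,2)$ is handled trivially by unitarity and Minkowski's inequality.

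Composing the bounds for $T$ and $T^*$ gives the non-retarded inhomogeneous estimate $\big\|\int_\R e^{i(\cdot-s)\Delta}G(s)\,ds\big\|_{L^qL^r}\lesssim\|G\|_{L^{\tilde q'}L^{\tilde r'}}$, and the passage to the truncated integral $\int_{t_0}^{\cdot}$ follows from the Christ--Kiselev lemma, which applies since $q>\tilde q'$; indeed \eqref{admiss} forces $q>2>\tilde q'$ in dimension two (the few borderline configurations, such as $(q,r)=(\infty,2)$, can alternatively be treated by inserting a time cutoff directly into the dual estimate). Finally, since $e^{it\Delta}$ is a Fourier multiplier it commutes with $\langle\nabla\rangle$; applying the $L^2$-level estimates already obtained to $\langle\nabla\rangle v$ and to $\langle\nabla\rangle F$ (equivalently, to $v,\nabla v$ and $F,\nabla F$, using the equivalence of the $W^{1,r}$ norm with $\|\langle\nabla\rangle\,\cdot\,\|_{L^r}$ for $1<r<\infty$) replaces the $L^r$ and $L^{\tilde r'}$ norms in \eqref{stri} by $W^{1,r}$ and $W^{1,\tilde r'}$. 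Collecting the pieces through Duhamel's formula gives \eqref{stri}. The only genuinely delicate point is the range of validity of the Hardy--Littlewood--Sobolev step and of Christ--Kiselev, which is exactly where the strict inequality $2/q<1$ in \eqref{admiss} is used, and which is why the endpoint is excluded; full details can be found in \cite{Ca}.
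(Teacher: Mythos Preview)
Your sketch is correct and follows the standard route to Strichartz estimates: dispersive decay, $TT^*$, Hardy--Littlewood--Sobolev in time, Christ--Kiselev for the retarded integral, and then commutation with $\langle\nabla\rangle$ to lift from $L^r$ to $W^{1,r}$. The only point worth tightening is the endpoint bookkeeping: the admissibility condition \eqref{admiss} allows $(q,r)=(\infty,2)$, where HLS is not needed (unitarity suffices), and it excludes the 2D endpoint $(2,\infty)$, so the strict inequality $2/q<1$ indeed guarantees both the HLS step and the Christ--Kiselev hypothesis $q>2>\tilde q'$ throughout.

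As for comparison with the paper: there is nothing to compare. The paper does not prove Proposition~\ref{Strich}; it simply records the statement and cites Cazenave's monograph \cite{Ca} for the proof. Your proposal therefore supplies strictly more than the paper does on this point, and what you supply is the argument one finds in \cite{Ca} (or in Keel--Tao, Ginibre--Velo, etc.). If you want to match the paper's treatment, a one-line citation would suffice; if you want to keep your sketch, it is self-contained and accurate.
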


	The following continuity argument (or bootstrap argument) will be useful for our purpose.
	\begin{theo}[\sf Continuity argument] \quad\\
		\label{cont}
		Let $X : [0,T]\to\R$ be a nonnegative continuous function, such that, for
		every $0\leqslant t \leqslant T $,
		$$
		X(t) \leqslant a+ b X(t)^{\theta} \, ,
		$$
		where $a,b>0$ and $\theta>1$ are constants such that
		$$
		a<\left(1-\frac{1}{\theta}\right) \frac{1}{(\theta b)^{1/(\theta-1)}} \quad \mbox{and} \quad X(0)\leqslant \frac{1}{(\theta b)^{1/(\theta-1)}}.
		$$
		Then, for every $0\leqslant t \leqslant T $, we have
		$$
		X(t) \leqslant \frac{\theta}{\theta-1}a.
		$$
	\end{theo}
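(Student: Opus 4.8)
The plan is to prove the continuity (bootstrap) lemma by a standard connectedness argument on the interval $[0,T]$. Fix the constants and set $m := \frac{1}{(\theta b)^{1/(\theta-1)}}$, so the hypotheses read $X(0)\le m$ and $a < \left(1-\frac{1}{\theta}\right)m$. First I would analyze the real-variable function $f(y) := a + by^{\theta} - y$ on $[0,\infty)$: since $\theta > 1$, $f$ is convex (indeed $f''(y) = \theta(\theta-1)by^{\theta-2} \ge 0$), $f(0) = a > 0$, and $f(y)\to +\infty$ as $y\to\infty$. Computing $f'(y) = \theta b y^{\theta-1} - 1$, the unique minimizer is precisely $y_* = m = (\theta b)^{-1/(\theta-1)}$, and a short computation gives the minimum value $f(m) = a - \left(1-\frac{1}{\theta}\right)m < 0$ by the first smallness hypothesis. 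Hence $f$ has exactly two positive roots $0 < y_1 < m < y_2$, and $f(y) \le 0$ precisely on $[y_1, y_2]$; in particular $f(y) > 0$ on $[0, y_1)$.

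\textbf{The dichotomy.} The inequality $X(t)\le a + bX(t)^\theta$ is equivalent to $f(X(t))\ge 0$, so for each $t\in[0,T]$ we have $X(t)\in[0,y_1]\cup[y_2,\infty)$. Now I would introduce the two sets
\[
A := \{\, t\in[0,T] \ : \ X(t)\le y_1 \,\}, \qquad B := \{\, t\in[0,T] \ : \ X(t)\ge y_2 \,\}.
\]
By the dichotomy $A\cup B = [0,T]$, and since $y_1 < m < y_2$ they are disjoint. Both are closed in $[0,T]$ by continuity of $X$. Since $[0,T]$ is connected and $A$ is nonempty (indeed $0\in A$ because $X(0)\le m$ forces, via the dichotomy applied at $t=0$, that $X(0)\le y_1$ — note $X(0)\le m < y_2$ rules out $X(0)\ge y_2$ unless $y_1 = y_2$, which is excluded since $f(m)<0$), we conclude $B = \emptyset$ and $A = [0,T]$. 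Thus $X(t)\le y_1$ for all $t\in[0,T]$.

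\textbf{Concluding the bound.} It remains to bound $y_1$ by $\frac{\theta}{\theta-1}a$. Since $y_1$ is the smaller root, for $y\in[0,y_1]$ we have $f(y)\ge 0$, i.e. $y \le a + by^\thety \le a + by_1^{\theta-1}y$ — more directly, from $f(y_1)=0$ we get $y_1 = a + by_1^\theta$, and I would instead argue by comparison with the linearization: on $[0,m]$, $f$ convex with $f(0)=a$ and $f(m) = a - (1-\frac1\theta)m$ gives, by convexity, $f(y)\le a - \frac{y}{m}\cdot(1-\frac1\theta)m = a - (1-\frac1\theta)y$ for $y\in[0,m]$ (the chord lies above the graph). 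Since $y_1\le m$ and $f(y_1)=0$, this yields $0 \le a - (1-\frac1\theta)y_1$, hence $y_1 \le \frac{a}{1-1/\theta} = \frac{\theta}{\theta-1}a$, and therefore $X(t)\le \frac{\theta}{\theta-1}a$ for all $t\in[0,T]$, as claimed.

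\textbf{Main obstacle.} The only delicate point is verifying that $0\in A$ rather than $0\in B$ — that is, ruling out the ``bad'' branch at the initial time. This is exactly where the hypothesis $X(0)\le m$ (not merely $X(0)$ finite) is used: together with $m < y_2$, which holds because $f(m) < 0$ is a strict inequality guaranteeing $y_1 < m < y_2$ as two distinct roots, it forces $X(0) < y_2$, hence $X(0)\in[0,y_1]$ by the dichotomy. Everything else is routine convexity bookkeeping for the single-variable function $f$.
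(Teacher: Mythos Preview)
Your proof is correct and follows essentially the same approach as the paper: both analyze the auxiliary function $f(y)=a+by^{\theta}-y$, use that $f$ has its minimum at $m=(\theta b)^{-1/(\theta-1)}$ with $f(m)<0$, and exploit the dichotomy $f(X(t))\ge 0$ together with $X(0)\le m$ and continuity to trap $X(t)$ below the smaller root. The paper's proof is a terse sketch that stops at ``we deduce the desired result''; your version spells out the connectedness argument and, in particular, supplies the explicit bound $y_1\le \frac{\theta}{\theta-1}a$ via the chord-above-graph convexity inequality, which the paper leaves implicit.
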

	\begin{proof}
    We sketch the proof for reader's convenience. The function $f: x \longmapsto b x^{\theta}-x+a$ is decreasing on $[0,(\theta b)^{1/(1-\theta)} ]$ and increasing on $[(\theta b)^{1/(1-\theta)},\infty)$. The assumptions on $a$ and $X(0)$ imply that $f((\theta b)^{1/(1-\theta)})<0$. As $f(X(t))\geqslant 0, f(0) >0$ and $X(0)\leqslant \frac{1}{(\theta b)^{1/(\theta-1)}}$, we deduce the desired result.
    \end{proof}

	\section{Pseudo-conformal law}
	\label{S3}
	In this section, we show a decaying property of global solutions to \eqref{NLSexp}. Note that the conservation laws of mass and Hamiltonian give the boundedness of the $L^2$ and the $H^1$ norms but are insufficient to provide a decay estimate in (more general) Lebesgue spaces. To obtain such a decay we will take advantage of the pseudo-conformal law.
	
	More precisely, we define the following quantities
	\beq
	\label{V}
	\vv(t):=\int\,|x|^2 |u(t,x)|^2\,dx,
	\eeq
	
	\beq
	\label{M}
	\mm(t):=2\int\,{\mathcal{I}} \left(\bar{u}(t,x) x\cdot\nabla u (t,x)\right)\,dx,
	\eeq
	
	\beq
	\label{K}
	\kk(t):=\|(x+2i t \nabla)u(t)\|_{L^2}^2+\frac{4t^2}{\alpha}\int\left({\rm e}^{\alpha|u(t,x)|^2}-1-\alpha|u(t,x)|^2-\frac{\alpha^2}{2}|u(t,x)|^4 \right) \frac{dx}{|x|^b},
	\eeq
	\begin{align} \label{def-G}
			\begin{aligned}
				G(t) &:=\frac{4(2-b)}{\alpha} \int \left( {\rm e}^{\alpha|u(t,x)|^2} - 1 - \alpha |u(t,x)|^2-\frac{\alpha}{2} |u(t,x)|^4\right) \frac{dx}{|x|^b}  \\
				&\mathrel{\phantom{=}} -\frac{8}{\alpha} \int \left( {\rm e}^{\alpha|u(t,x)|^2} (\alpha|u(t,x)|^2-1) +1 - \frac{\alpha^2}{2} |u(t,x)|^4\right) \frac{dx}{|x|^b} \\
				&=: \int g(|u(t,x)|^2) \frac{dx}{|x|^b},
			\end{aligned}
		\end{align}
		where
		\begin{align}
			\label{def-g}
			g(\tau) = \frac{4(2-b)}{\alpha} \left( {\rm e}^{\alpha \tau} -1 -\alpha \tau -\frac{\alpha^2}{2} \tau^2\right) -\frac{8}{\alpha} \left({\rm e}^{\alpha\tau} (\alpha \tau-1) +1 - \frac{\alpha^2}{2} \tau^2 \right).
		\end{align}
		 
	\begin{prop}
		\label{VMKG}
		Let $u_0 \in \Sigma$ and $u$ the corresponding global solution to \eqref{NLSexp}. Then
		\begin{eqnarray}
		\frac{d\vv(t)}{dt}&=&2\mm(t),\label{dv}\\
		\frac{d^2\vv(t)}{dt^2}&=&8{\mathcal H}(u_0)-G(t), \label{ddv}\\
		\frac{d\kk(t)}{dt}&=&tG(t),\label{dk}\\
		G(t)&\leq& 0,\quad \forall\; t\in\R. \label{Gl0}
		\end{eqnarray}
	\end{prop}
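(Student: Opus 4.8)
The plan is to compute the first and second time derivatives of $\vv(t)$ directly from the equation \eqref{NLSexp}, then identify $\kk(t)$ as a natural antiderivative-type quantity and finally reduce \eqref{Gl0} to a pointwise inequality for the real function $g$. First I would establish \eqref{dv}. Differentiating $\vv(t) = \int |x|^2 |u|^2\,dx$ under the integral sign and using $\partial_t |u|^2 = 2\,\mathcal{I}(\bar u\, \partial_t u) = 2\,\mathcal{I}(\bar u\, i\Delta u) - 2\,\mathcal{I}(\bar u\, i |x|^{-b}(e^{\alpha|u|^2}-1-\alpha|u|^2)u)$, the nonlinear term drops since $\mathcal{I}(i|f|^2 \cdot (\text{real}))=0$, and an integration by parts on $\int |x|^2 \mathcal{I}(\bar u\, i\Delta u)\,dx$ yields $2\mm(t)$ after regrouping. (As is standard, I would first work with $\Sigma$-valued smooth solutions, or justify the differentiation via the local theory and a regularization argument, so that all integrations by parts are licit; the decay of $|x|u$ in $L^2$ guarantees no boundary terms at infinity.)

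Next, for \eqref{ddv} I would differentiate $\mm(t)$ in time. This is the main computational step: using the equation to replace $\partial_t u$, one gets a kinetic contribution producing $8\int|\nabla u|^2$ and a nonlinear contribution of the form $\int x\cdot\nabla(|x|^{-b} F(|u|^2))\,dx$ with $F(\tau)=\int_0^\tau (e^{\alpha s}-1-\alpha s)\,ds$-type antiderivative; integrating by parts in $x$ (the $x\cdot\nabla$ acts both on $|x|^{-b}$, giving a factor $-b$, and on $F$) converts this into the purely zeroth-order expression $8\mathcal{H}(u_0) - G(t)$, where the $\frac{\alpha^2}{2}|u|^4$ subtractions in $\mathcal H$ and in $G$ are arranged precisely so the ``non-quartic'' and ``quartic-correction'' pieces assemble into \eqref{def-G}. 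The key algebraic check is that the weight-derivative term $-b$ times the potential energy, combined with the Pohozaev-type term from integrating $x\cdot\nabla F$ by parts, reproduces exactly the coefficients $\frac{4(2-b)}{\alpha}$ and $\frac{8}{\alpha}$ in \eqref{def-g}.

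For \eqref{dk}, I would differentiate $\kk(t)$ directly. Writing $\kk(t) = \vv(t) + 4t\,\mm(t)\cdot(\text{sign bookkeeping}) + 4t^2[\,\|\nabla u\|^2 + \text{potential}\,]$ — more precisely expanding $\|(x+2it\nabla)u\|_{L^2}^2 = \vv(t) - 4t\,\mm(t) + 4t^2\|\nabla u\|_{L^2}^2$ — and using \eqref{dv}, \eqref{ddv} together with $\frac{d}{dt}\|\nabla u\|_{L^2}^2 = -\frac{d}{dt}(\text{potential part of }\mathcal H)$ from Hamiltonian conservation, all terms not proportional to $tG(t)$ should cancel; this is again a bookkeeping computation rather than a genuine obstacle. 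Finally, \eqref{Gl0} reduces to showing $g(\tau)\le 0$ for all $\tau\ge 0$ (then $G(t)=\int g(|u|^2)|x|^{-b}\,dx\le 0$ since the weight is nonnegative). I expect \emph{this pointwise inequality to be the main obstacle in spirit}, though it is elementary: expanding in Taylor series, $e^{\alpha\tau}-1-\alpha\tau-\tfrac{\alpha^2}{2}\tau^2 = \sum_{k\ge 3}\frac{(\alpha\tau)^k}{k!}$ and $e^{\alpha\tau}(\alpha\tau-1)+1-\tfrac{\alpha^2}{2}\tau^2 = \sum_{k\ge 3}\frac{(k-1)(\alpha\tau)^k}{k!}$, so
\[
g(\tau) = \frac{1}{\alpha}\sum_{k\ge 3}\frac{(\alpha\tau)^k}{k!}\bigl(4(2-b) - 8(k-1)\bigr),
\]
and since $0<b<1$ gives $4(2-b)<8 \le 8(k-1)$ for all $k\ge 3$, every coefficient is strictly negative, whence $g(\tau)\le 0$ with equality only at $\tau=0$. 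I would present the series identities for the two bracketed functions, note the sign of each coefficient, and conclude.
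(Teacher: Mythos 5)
Your plan is sound and, for \eqref{dv}--\eqref{dk}, essentially matches the paper's proof: the paper also starts from the standard virial formula $\frac{d^2\vv}{dt^2}=8\int|\nabla u|^2+4\int x\cdot\{N(x,u),u\}_p$, computes the momentum bracket by expanding $N$ in its Taylor series in $|u|^2$ and integrating by parts (picking up exactly the $-b$ from $x\cdot\nabla(|x|^{-b})$ and the Pohozaev term you describe), and then derives \eqref{dk} from the compact identity $\kk(t)=\vv(t)-t\,\vv'(t)+4t^2\mathcal{H}(u_0)$ — a slightly slicker route to \eqref{dk} than re-expanding $\|(x+2it\nabla)u\|_{L^2}^2$, but equivalent. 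One small slip in your bookkeeping: with the paper's normalization $\mm(t)=2\int\mathcal{I}(\bar u\,x\cdot\nabla u)$, the correct expansion is $\|(x+2it\nabla)u\|_{L^2}^2=\vv(t)-2t\,\mm(t)+4t^2\|\nabla u\|_{L^2}^2$, not $-4t\,\mm(t)$; this does not affect your conclusion. The one place you genuinely diverge from the paper is \eqref{Gl0}: you prove $g\le 0$ termwise via the power series, noting every coefficient $4(2-b)-8(k-1)<0$ for $k\ge 3$ when $0<b<1$, whereas the paper instead computes $g'(\tau)=-8\bigl(\alpha\tau e^{\alpha\tau}-e^{\alpha\tau}+1\bigr)-4b\bigl(e^{\alpha\tau}-\alpha\tau-1\bigr)\le 0$ and uses $g(0)=0$. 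Both are elementary; your series argument is arguably cleaner (and gives strict negativity for $\tau>0$ for free), while the derivative argument avoids manipulating series. Both are correct.
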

	
	\begin{proof}
		A straightforward computation gives \eqref{dv}.  Let $N(x,u):=|x|^{-b} \left({\rm e}^{\alpha|u|^2} - 1- \alpha |u|^2 \right) u$. Following \cite{TVZ} for instance, we find that
		$$
		\frac{d^2\vv(t)}{dt^2}=8\int\,|\nabla u|^2\,dx+4\int\, x\cdot\left\{N(x,u), u\right\}_p\,dx,
		$$
		where $\left\{f, g\right\}_p={\mathcal R}\left(f\nabla\bar{g}-g\nabla\bar{f}\right)$ is the momentum bracket.

		Now compute the momentum bracket $\left\{N(x,u), u\right\}_p$.  Expand $N(x,u)$ in a formal series
		$$
		N(x,u)=|x|^{-b}\sum_{k=2}^{\infty}\, \frac{\alpha^k}{k!}\,|u|^{2k}u.
		$$
		Using the fact 
			\[
			\{|x|^{-b} |u|^\beta u, u\}_p = -\frac{\beta}{\beta+2} \nabla (|x|^{-b} |u|^{\beta+2}) - \frac{2}{\beta+2} \nabla(|x|^{-b}) |u|^{\beta+2},
			\]
		one gets
		\begin{align*}
				\{N(x,u),u\}_p &= \sum_{k=2}^\infty \frac{\alpha^k}{k!} \{|x|^{-b} |u|^{2k} u, u\}_p \\
				&= - \sum_{k=2}^\infty k\frac{\alpha^k}{(k+1)!} \nabla (|x|^{-b} |u|^{2k+2}) - \sum_{k=2}^\infty \frac{\alpha^k}{(k+1)!} \nabla(|x|^{-b}) |u|^{2k+2}.
			\end{align*}
		An integration by parts leads
		\begin{align*}
				\int x \cdot \{ N(x,u),u\}_p &= 2\int \left( \sum_{k=2}^\infty k \frac{\alpha^k}{(k+1)!} |u|^{2k+2} \right) \frac{dx}{|x|^b} + b \int \left( \sum_{k=2}^\infty \frac{\alpha^k}{(k+1)!} |u|^{2k+2} \right) \frac{dx}{|x|^b} \\
				&=\frac{2}{\alpha} \int \left( {\rm e}^{\alpha|u|^2} (\alpha|u|^2-1) +1 - \frac{\alpha^2}{2}|u|^4\right) \frac{dx}{|x|^b} \\
				&\mathrel{\phantom{=}}+ \frac{b}{\alpha} \int \left( {\rm e}^{\alpha|u|^2} -1 - \alpha|u|^2 - \frac{\alpha^2}{2} |u|^4 \right) \frac{dx}{|x|^b},
			\end{align*}
		where we have used
			\begin{align*}
				\sum_{k=2}^\infty k \frac{\alpha^k}{(k+1)!} |u|^{2k+2} &= \frac{1}{\alpha} \left({\rm e}^{\alpha|u|^2} (\alpha |u|^2-1) +1 - \frac{\alpha^2}{2} |u|^4 \right), \\
				\sum_{k=2}^\infty \frac{\alpha^k}{(k+1)!} |u|^{2k+2} &= \frac{1}{\alpha} \left( {\rm e}^{\alpha|u|^2} -1 - \alpha |u|^2 - \frac{\alpha^2}{2}|u|^4\right).
			\end{align*}
			Therefore,
			\begin{align*}
				\frac{d^2\vv(t)}{dt^2} &= 8 \|\nabla u(t)\|^2_{L^2} + \frac{8}{\alpha} \int \left({\rm e}^{\alpha|u|^2}(\alpha|u|^2-1) +1 - \frac{\alpha^2}{2} |u|^4\right) \frac{dx}{|x|^b} \\
				&\mathrel{\phantom{= 8 \|\nabla u(t)\|^2_{L^2} }} + \frac{4b}{\alpha} \int \left( {\rm e}^{\alpha|u|^2} - 1- \alpha|u|^2 - \frac{\alpha^2}{2} |u|^4\right) \frac{dx}{|x|^b}.
			\end{align*}
		Using the conservation law \eqref{hamil}, we conclude the proof of \eqref{ddv}. To prove \eqref{dk}, we first remark that
		$$
		\kk(t)=\vv(t)-t\frac{d\vv(t)}{dt}+4t^2{\mathcal H}(u_0).
		$$
		Hence
		$$
		\frac{d\kk(t)}{dt}=-t\frac{d^2\vv(t)}{dt^2}+8t{\mathcal H}(u_0),
		$$
		and the conclusion follows. Finally, for the sign of $G$, a simple computation shows that (for all $\tau\geq 0$)
		\begin{eqnarray*}
			g'(\tau)&=&-8(\alpha xe^{\alpha x}-e^{\alpha x}+1)-4 b (e^{\alpha x}-\alpha x-1)\leq 0.
		\end{eqnarray*}
		Since $g(0)=0$, we get \eqref{Gl0}.
	\end{proof}
	As a consequence of Proposition \ref{VMKG}, we have
	\begin{cor}
		\label{KK}
		Let $u_0 \in \Sigma$ and $u$ the corresponding global solution to \eqref{NLSexp}. Then
		\begin{multline*}
			\|(x+2it\nabla)u(t)\|_{L^2}^2+\frac{4t^2}{\alpha}\int \left({\rm e}^{\alpha|u(t,x)|^2}-1-\alpha|u(t,x)|^2-\frac{\alpha^2}{2}|u(t,x)|^4 \right) \frac{dx}{|x|^b} \\
			=\|x u_0\|_{L^2}^2+\int_{0}^{t} \tau\,G(\tau) \, d\tau.
		\end{multline*}
	\end{cor}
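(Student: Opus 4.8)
The plan is to obtain the corollary simply by integrating the differential identity \eqref{dk} of Proposition \ref{VMKG}. The first observation is that the left-hand side of the claimed equality is nothing but $\kk(t)$ as defined in \eqref{K}, so the statement is equivalent to $\kk(t)=\kk(0)+\int_0^t \tau\,G(\tau)\,d\tau$ for all $t\in\R$. Since Proposition \ref{VMKG} provides $\kk\in C^1(\R)$ with $\kk'(t)=tG(t)$, and $t\mapsto tG(t)$ is continuous (the integrals in \eqref{def-G} being finite along the $\Sigma$-valued flow $t\mapsto u(t)$ emanating from $u_0\in\Sigma$), the fundamental theorem of calculus immediately gives this identity.

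Next I would compute $\kk(0)$. Setting $t=0$ in \eqref{K}, the term carrying the factor $4t^2$ drops out and the remaining term is $\|(x+0)u(0)\|_{L^2}^2=\|xu_0\|_{L^2}^2$; hence $\kk(0)=\|xu_0\|_{L^2}^2$. Plugging this into the integrated identity produces exactly the asserted formula.

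I do not expect any genuine obstacle here, as the corollary is a direct consequence of \eqref{dk}. The only point deserving a word of care is the justification that $\vv$, $\mm$, $\kk$, $G$ are well defined and that $\kk$ is differentiable for data merely in $\Sigma$ — but this has already been addressed in the statement and proof of Proposition \ref{VMKG}, which I am entitled to invoke, so no additional work is required. An equivalent route, should one prefer not to appeal to \eqref{dk} directly, is to start from the algebraic relation $\kk(t)=\vv(t)-t\vv'(t)+4t^2\mathcal H(u_0)$ recorded in the proof of Proposition \ref{VMKG}, differentiate it, substitute \eqref{ddv}, and integrate from $0$ to $t$; this reproduces the same conclusion.
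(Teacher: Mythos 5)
Your proof is correct and follows exactly the argument the paper intends: the left-hand side is by definition $\kk(t)$, \eqref{dk} gives $\kk'(\tau)=\tau G(\tau)$, and integrating from $0$ to $t$ with $\kk(0)=\|xu_0\|_{L^2}^2$ yields the identity. The paper leaves this as an immediate consequence of Proposition \ref{VMKG} with no written proof, and your argument is precisely that consequence.
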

	
	
	\section{Decay estimate}
	\label{S4}
	\begin{theo}
		Let $u_0 \in \Sigma$ and $u$ the corresponding global solution to \eqref{NLSexp}. Then, for all $t\neq0$ and $2\leq\,q<\infty$,
		\[
		\| u(t)\|_{L^q} \leq\,C_q\|u_0\|_{\Sigma}\, |t|^{-(1-\frac{2}{q})},
		\]
		where $C_q>0$ is a constant depending only on $q$.
	\end{theo}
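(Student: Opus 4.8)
The plan is to exploit the pseudo-conformal structure already encoded in Corollary \ref{KK}. The first step is to extract from it a uniform-in-time bound on the weighted variant $(x+2it\nabla)u(t)$. Since for $\tau\geq 0$ one has ${\rm e}^{\alpha\tau}-1-\alpha\tau-\frac{\alpha^2}{2}\tau^2=\sum_{k\geq 3}\frac{(\alpha\tau)^k}{k!}\geq 0$, the second term in the definition \eqref{K} of $\kk(t)$ is nonnegative, so Corollary \ref{KK} gives
\[
\|(x+2it\nabla)u(t)\|_{L^2}^2\leq \kk(t)=\|x u_0\|_{L^2}^2+\int_0^t \tau\,G(\tau)\,d\tau .
\]
By \eqref{Gl0}, $G\leq 0$, hence $\tau\mapsto \tau G(\tau)$ is $\leq 0$ on $[0,t]$ when $t>0$ and $\geq 0$ on $[t,0]$ when $t<0$; in either case $\int_0^t\tau G(\tau)\,d\tau\leq 0$, and we conclude $\|(x+2it\nabla)u(t)\|_{L^2}\leq \|xu_0\|_{L^2}$ for all $t\in\R$.

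The second step is the pseudo-conformal change of unknown. For $t\neq 0$ set $v(t,x):={\rm e}^{-i|x|^2/(4t)}u(t,x)$; a direct differentiation gives $\nabla v(t,x)=\frac{1}{2it}\,{\rm e}^{-i|x|^2/(4t)}\,(x+2it\nabla)u(t,x)$, so that $|\nabla v(t,x)|=\frac{1}{2|t|}\,|(x+2it\nabla)u(t,x)|$ pointwise and therefore
\[
\|\nabla v(t)\|_{L^2}=\frac{1}{2|t|}\,\|(x+2it\nabla)u(t)\|_{L^2}\leq \frac{\|xu_0\|_{L^2}}{2|t|}.
\]
Since moreover $|v(t,x)|=|u(t,x)|$, conservation of mass \eqref{mass} yields $\|v(t)\|_{L^2}=\|u_0\|_{L^2}$, and the pointwise identity also gives $\|u(t)\|_{L^q}=\|v(t)\|_{L^q}$ for every $q$.

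The last step is to feed these two bounds into the two-dimensional Gagliardo--Nirenberg inequality \eqref{GaNi2D} applied to $v(t)$: for $2\leq q<\infty$,
\[
\|u(t)\|_{L^q}=\|v(t)\|_{L^q}\lesssim \|v(t)\|_{L^2}^{2/q}\,\|\nabla v(t)\|_{L^2}^{1-2/q}\lesssim \|u_0\|_{L^2}^{2/q}\Big(\frac{\|xu_0\|_{L^2}}{|t|}\Big)^{1-2/q}\lesssim \|u_0\|_{\Sigma}\,|t|^{-(1-2/q)},
\]
which is the assertion. I do not expect a genuine obstacle here: the computations are short and the two conservation-type inputs are already available. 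The only point deserving care is the bookkeeping of the sign of $\int_0^t\tau G(\tau)\,d\tau$ for negative times, and the fact that all quantities involved ($\vv$, $\kk$, $\|(x+2it\nabla)u(t)\|_{L^2}$) are finite and the formal manipulations behind Proposition \ref{VMKG} and Corollary \ref{KK} are legitimate — this is ensured by the persistence property $u\in C(\R,\Sigma)$ coming from the well-posedness theory of \eqref{NLSexp}.
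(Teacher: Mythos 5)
Your proposal is correct and follows essentially the same route as the paper: discard the nonnegative nonlinear piece of $\kk(t)$, use $G\leq 0$ from Proposition \ref{VMKG} to drop $\int_0^t\tau G(\tau)\,d\tau$, pass through the pseudo-conformal change $v={\rm e}^{-i|x|^2/(4t)}u$ with the identity $\|(x+2it\nabla)u(t)\|_{L^2}=2|t|\,\|\nabla v(t)\|_{L^2}$, and finish with Gagliardo--Nirenberg. The only cosmetic difference is that the paper packages the inequality as $4t^2\mathcal{H}(v(t))\leq\|xu_0\|_{L^2}^2$ before extracting the gradient bound, whereas you drop the positive term directly; the substance is identical.
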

	\begin{proof}
		Set $v(t,x):={\rm e}^{-i\frac{|x|^2}{4t}}u(t,x)$. We see that
		$\|(x+2it\nabla)u(t)\|_{L^2}^2=4t^2 \|\nabla v(t)\|_{L^2}^2$. Hence, by Corollary \ref{KK},
		\[
		4 t^2 \mathcal{H} (v(t))=\|x u_0\|_{L^2}^2+\int_{0}^{t} \tau G(\tau) \, d\tau.
		\]
		Using \eqref{Gl0}, we get
		\[
		4 t^2 \|\nabla v(t)\|_{L^2}^2 \leq \|x u_0\|_{L^2}^2,
		\]
		or equivalently
		\[
		\|\nabla v(t)\|_{L^2} \lesssim |t|^{-1}.
		\]
		The conservation of mass, the fact that $|u|=|v|$ and the Gagliardo-Nirenberg inequality \eqref{GaNi2D}, yield, for all $2\leq\, q < \infty$,
		\[
		\| u(t)\|_{L^q}=\| v(t)\|_{L^q} \lesssim |t|^{-(1-\frac{2}{q})}.
		\]
		The proof is complete.
	\end{proof}
	
	A natural and useful consequence from the previous theorem is the following bound estimate.
	\begin{cor}
		\label{Bound1}
		Let $u_0 \in \Sigma$ and $u$ the corresponding global solution to \eqref{NLSexp}. Let $1\leq p<\infty,\;\; 2\leq q<\infty$ be such that
		\begin{equation}
		\label{pq}
		p\left(1-{2 \over q}\right)>1.
		\end{equation}
		Then, for all $T>0$, we have
		$$
		\|u\|_{L^p([T,\infty); L^q)}\lesssim \frac{T^{{1\over p}+{2\over q}-1}}{\left(p\left(1-{2\over q}\right)-1\right)^{1/p}}<\infty.
		$$
	\end{cor}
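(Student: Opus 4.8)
The plan is to read off the bound as an immediate consequence of the decay estimate established in the previous theorem. First I would unfold the definition of the space–time norm,
\[
\|u\|_{L^p([T,\infty);L^q)}^p = \int_T^\infty \|u(t)\|_{L^q}^p\,dt,
\]
and then insert the pointwise-in-time bound $\|u(t)\|_{L^q}\le C_q\|u_0\|_{\Sigma}\,|t|^{-(1-\frac{2}{q})}$, which is valid for every $t\neq 0$ and every $2\le q<\infty$ and requires only $u_0\in\Sigma$ (which is assumed). This reduces the whole matter to the scalar integral $\int_T^\infty t^{-p(1-\frac{2}{q})}\,dt$.

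The second step is simply to evaluate this integral. Writing $\gamma:=p\left(1-\frac{2}{q}\right)$, the hypothesis \eqref{pq} says precisely that $\gamma>1$, so the integral converges on $[T,\infty)$ and equals $\frac{T^{1-\gamma}}{\gamma-1}$. Combining the two steps gives
\[
\|u\|_{L^p([T,\infty);L^q)}^p \le C_q^p\,\|u_0\|_{\Sigma}^p\,\frac{T^{\,1-p(1-\frac{2}{q})}}{p\left(1-\frac{2}{q}\right)-1},
\]
and taking the $p$-th root, together with the elementary identity $\frac{1-p(1-\frac{2}{q})}{p}=\frac1p+\frac2q-1$, yields the claimed bound, the implicit constant absorbing $C_q$ and $\|u_0\|_{\Sigma}$.

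There is no genuine obstacle here; the one point worth emphasizing is that the integrability at $+\infty$ is governed exactly by the condition $p\left(1-\frac{2}{q}\right)>1$, and this same condition forces the exponent $\frac1p+\frac2q-1$ to be negative, so that the right-hand side is not only finite but in fact decays to $0$ as $T\to\infty$; the finiteness assertion in the statement is then clear since $T>0$.
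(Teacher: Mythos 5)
Your proof is correct and is exactly the argument the paper intends: the corollary is stated without proof precisely because it follows immediately from the decay estimate $\|u(t)\|_{L^q}\lesssim |t|^{-(1-2/q)}$ by integrating $t^{-p(1-2/q)}$ over $[T,\infty)$, using \eqref{pq} to ensure convergence. Your computation of the exponent $\frac{1}{p}+\frac{2}{q}-1$ and the denominator $p(1-\frac{2}{q})-1$ matches the stated bound.
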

	For bounded time intervals, the local theory allows us to remove the assumption \eqref{pq} to obtain
	\begin{cor}
		\label{Bound2}
		Let $u_0 \in \Sigma$ and $u$ the corresponding global solution to \eqref{NLSexp}. Let $1\leq p<\infty,\;\; 2\leq q<\infty$ and $0<T<S<\infty$. Then
		$$
		\|u\|_{L^p([T,S]; L^q)}\leq C,
		$$
		where $C>0$ depends only on $p,\;\;q,\;\; T,\;\; S,\;\;\|u_0\|_{\Sigma}$.
	\end{cor}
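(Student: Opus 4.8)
The plan is to obtain this bound as an immediate consequence of the pointwise-in-time decay estimate established in the previous theorem, using crucially that the interval $[T,S]$ is compact and stays at a positive distance from the origin. First I would invoke that estimate: for every $t\neq 0$ and every $2\leq q<\infty$ one has $\|u(t)\|_{L^q}\leq C_q\|u_0\|_{\Sigma}\,|t|^{-(1-2/q)}$. Since $q\geq 2$, the exponent $1-2/q$ is nonnegative, so $t\mapsto|t|^{-(1-2/q)}$ is nonincreasing on $(0,\infty)$, and hence $\|u(t)\|_{L^q}\leq C_q\|u_0\|_{\Sigma}\,T^{-(1-2/q)}$ uniformly for $t\in[T,S]$.

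Then I would simply raise this bound to the power $p$ and integrate over $[T,S]$, whose Lebesgue measure equals $S-T<\infty$, to get
\[
\|u\|_{L^p([T,S];L^q)}\leq C_q\,\|u_0\|_{\Sigma}\,T^{-(1-2/q)}\,(S-T)^{1/p}=:C,
\]
a constant depending only on $p$, $q$, $T$, $S$ and $\|u_0\|_{\Sigma}$, which is exactly what is claimed.

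I do not expect any genuine obstacle here. The only point worth isolating is why the hypothesis \eqref{pq} of Corollary \ref{Bound1} can now be dropped: there it was needed to make $\int_T^{\infty}|t|^{-p(1-2/q)}\,dt$ converge, whereas over the bounded interval $[T,S]$ the integrand is continuous and therefore automatically integrable, whatever the size of $p(1-2/q)$. The degenerate case $q=2$ is even simpler, the estimate then reducing to the conservation of mass, $\|u\|_{L^p([T,S];L^2)}=\|u_0\|_{L^2}(S-T)^{1/p}$. As an alternative route that bypasses the decay estimate altogether, one could observe that the nonlinear part of $\mathcal H$ is nonnegative, so conservation of mass and Hamiltonian give $\sup_{t\in\R}\|u(t)\|_{H^1}\leq C(\|u_0\|_{\Sigma})$; then the Sobolev embedding $H^1(\R^2)\hookrightarrow L^q(\R^2)$ yields $\sup_t\|u(t)\|_{L^q}\lesssim C(\|u_0\|_{\Sigma})$ and hence the bound with $C=C(\|u_0\|_{\Sigma})(S-T)^{1/p}$, even uniformly in $T$.
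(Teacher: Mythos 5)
Your proposal is correct. The paper gives no explicit proof of this corollary; it only remarks that ``the local theory allows us to remove the assumption \eqref{pq}.'' Your first route exploits precisely what makes the assumption removable: the decay estimate $\|u(t)\|_{L^q}\lesssim \|u_0\|_\Sigma\,|t|^{-(1-2/q)}$ holds for every $t\neq 0$, the interval $[T,S]$ stays away from the origin since $T>0$, so the bound is uniform on $[T,S]$, and the interval has finite Lebesgue measure, so the time integral converges regardless of the size of $p(1-2/q)$. Your alternative route is, I suspect, closer to what the authors mean by ``local theory'': the nonlinear part of $\mathcal{H}$ is pointwise nonnegative (its integrand is $\frac{1}{\alpha|x|^b}\sum_{k\geq 3}\frac{\alpha^k}{k!}|u|^{2k}\geq 0$), so conservation of mass and Hamiltonian yield $\sup_{t\in\R}\|u(t)\|_{H^1}^2\leq \mathcal{M}(u_0)^2+\mathcal{H}(u_0)$; then the Sobolev embedding $H^1(\R^2)\hookrightarrow L^q(\R^2)$ for $2\leq q<\infty$ and the finite measure of $[T,S]$ finish the argument. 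Both routes are valid and brief; the second has the minor advantages of avoiding the weighted-data hypothesis entirely and of producing a constant that does not degenerate as $T\to 0^+$, whereas the first produces the cleaner and more quantitative bound $C_q\|u_0\|_\Sigma T^{-(1-2/q)}(S-T)^{1/p}$.
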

	Another important consequence that will be used to obtain global bounds asserts that one can decompose any time interval $(T,\infty)$ with $T>0$ into a finite number of intervals on which  the $L^p_t(L^q_x)$ norm is sufficiently small for every $(p,q)$ satisfying \eqref{pq}. More precisely, we have
	\begin{cor}
		\label{Bound3}
		Let $u_0 \in \Sigma$ and $u$ the corresponding global solution to \eqref{NLSexp}. Let $1\leq p<\infty,\;\; 2\leq q<\infty$, $\varepsilon>0$ and $T>0$. Assume that the condition \eqref{pq} is fulfilled. Then there exists $L\geq 1$ not depending on $u$ and time intervals $I_1, I_2,\cdots, I_L$ such that $\displaystyle\bigcup_{\ell=1}^{L}\, I_{\ell}=[T,\infty)$ and
		\begin{equation}
		\label{unifB}
		\|u\|_{L^p(I_{\ell}; L^q)}\leq\varepsilon,\qquad \forall\;\;\; \ell=1,2,\cdots,L.
		\end{equation}
	\end{cor}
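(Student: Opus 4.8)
The plan is to split the finite improper integral $\int_T^\infty\|u(s)\|_{L^q}^p\,ds$ into $L$ consecutive pieces of equal mass, each no larger than $\varepsilon^p$, and to pin down $L$ from the a priori bound supplied by Corollary \ref{Bound1}. First I would record that, since \eqref{pq} holds, Corollary \ref{Bound1} guarantees that $M:=\|u\|_{L^p([T,\infty);L^q)}$ is finite, with
\[
M\ \leq\ \mathfrak{M}\ :=\ C\,\|u_0\|_{\Sigma}\,\frac{T^{\frac1p+\frac2q-1}}{\bigl(p(1-\tfrac2q)-1\bigr)^{1/p}},
\]
a quantity depending only on $p,q,T$ and $\|u_0\|_{\Sigma}$. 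Then I would fix, once and for all, the integer
\[
L:=\max\bigl\{1,\ \lceil(\mathfrak{M}/\varepsilon)^p\rceil\bigr\}\ \geq\ 1,
\]
which depends only on $p,q,\varepsilon,T$ and $\|u_0\|_{\Sigma}$ --- in particular not on the fine structure of the solution $u$ --- and which satisfies $\mathfrak{M}^p/L\leq\varepsilon^p$, hence a fortiori $M^p/L\leq\varepsilon^p$.

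Next I would introduce the function $\phi:[T,\infty)\to[0,M^p]$ defined by $\phi(t):=\int_T^t\|u(s)\|_{L^q}^p\,ds$. By Corollary \ref{Bound2} the map $s\mapsto\|u(s)\|_{L^q}^p$ belongs to $L^1_{\mathrm{loc}}([T,\infty))$ --- it is in fact continuous, since the global solution satisfies $u\in C(\R,H^1)\hookrightarrow C(\R,L^q)$ --- so $\phi$ is continuous and nondecreasing, with $\phi(T)=0$ and $\phi(t)\to M^p$ as $t\to\infty$. For each $\ell=1,\dots,L-1$ one has $\ell M^p/L<M^p$, so the intermediate value theorem allows me to set
\[
t_\ell:=\inf\bigl\{t\geq T:\ \phi(t)\geq \ell M^p/L\bigr\},\qquad\text{so that}\qquad \phi(t_\ell)=\frac{\ell M^p}{L},
\]
with $T=t_0\leq t_1\leq\cdots\leq t_{L-1}<\infty$.

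I would then set $t_L:=+\infty$, put $I_\ell:=[t_{\ell-1},t_\ell]$ for $1\leq\ell\leq L-1$ and $I_L:=[t_{L-1},\infty)$, and observe that these intervals cover $[T,\infty)$. Using $\phi(t_\ell)-\phi(t_{\ell-1})=M^p/L$ (with the convention $\phi(+\infty)=M^p$ for $\ell=L$), for every $\ell$ one gets
\[
\|u\|_{L^p(I_\ell;L^q)}^p=\int_{I_\ell}\|u(s)\|_{L^q}^p\,ds=\phi(t_\ell)-\phi(t_{\ell-1})=\frac{M^p}{L}\leq\varepsilon^p,
\]
which is exactly \eqref{unifB}; any degenerate single-point intervals, arising where $\phi$ happens to be locally constant, may simply be discarded.

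I do not expect a genuine obstacle here: the argument is purely measure-theoretic once Corollary \ref{Bound1} provides the finite, $u$-independent a priori bound $\mathfrak{M}$. The two points deserving a word of care are the treatment of the unbounded last interval $I_L$ --- for which one must invoke $\phi(t)\to M^p$ rather than a value at a finite right endpoint --- and the fact that $L$ must be defined through $\mathfrak{M}$, not through $M$ itself, since this is precisely what makes the number of intervals independent of the particular solution $u$.
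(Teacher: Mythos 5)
Your proof is correct, and it takes a genuinely different route from the paper's. The paper first uses Corollary \ref{Bound1} to find a time $S>T$ (depending only on $p,q,\varepsilon,T,\|u_0\|_\Sigma$) with $\|u\|_{L^p([S,\infty);L^q)}\leq\varepsilon$, then splits $[T,S]$ into $m$ equal-length subintervals $[T_\ell,T_{\ell+1}]$ and controls the $L^p$-in-time norm on each by H\"older in time against the finite $L^{2p}([T,S];L^q)$ norm supplied by Corollary \ref{Bound2}; the factor $\bigl(\tfrac{S-T}{m}\bigr)^{1/(2p)}$ from H\"older tends to zero as $m\to\infty$, which gives the smallness. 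You instead build an equal-mass partition of $[T,\infty)$ directly via the cumulative function $\phi(t)=\int_T^t\|u(s)\|_{L^q}^p\,ds$ and the intermediate value theorem, with $L$ chosen once and for all from the explicit a priori bound $\mathfrak{M}$. Your argument is slightly more streamlined: it never needs the $L^{2p}$ boundedness from Corollary \ref{Bound2}, only the finiteness of the $L^p$ norm itself, and it treats the tail and the bulk in one uniform stroke rather than two. The paper's version has the mild advantage that the bulk subintervals are fixed in advance (equal length, independent of $u$), whereas your cut points $t_\ell$ depend on $u$; but the corollary only asks that $L$, not the intervals themselves, be independent of $u$, so this is immaterial. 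Both proofs share the same reading of ``not depending on $u$'': in each, $L$ (resp.\ $m+1$) depends on $\|u_0\|_\Sigma$ through the decay constant of Corollary \ref{Bound1}, so what is really meant is independence from the fine dynamics of the solution beyond $p,q,\varepsilon,T$ and $\|u_0\|_\Sigma$.
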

	\begin{proof}
		From Corollary \ref{Bound1}, one can choose $S>T$ sufficiently large (not depending on $u$) such that $\|u\|_{L^p([S,\infty); L^q)}\leq\varepsilon$. Define
		$$
		T_\ell=T+\ell\frac{S-T}{m},\qquad \ell=0,1,\cdots, m,
		$$
		where $m\geq 1$ to be chosen later. Using H\"older's inequality in time, we obtain that
		\begin{eqnarray*}
			\|u\|_{L^p(T_\ell, T_{\ell+1}]; L^q)}&\leq& \left(\frac{S-T}{m}\right)^{\frac{1}{2p}}\; \|u\|_{L^{2p}([T, S]; L^q)}\\
			&\lesssim& \left(\frac{S-T}{m}\right)^{\frac{1}{2p}}\\
			&\leq&\varepsilon,
		\end{eqnarray*}
		for $m\geq 1$ sufficiently large and for all $\ell=0,1,\cdots,m-1$. This finishes the proof of Corollary \ref{Bound3}.
	\end{proof}
	\section{Global bounds 1}
	\label{S5}
	In this section, we give the proof of the first global bound in \eqref{glo-bou}. For a time slab $I\subset\R$, we define $S^1(I)$ via
	$$
	\|u\|_{S^1(I)} = \|u\|_{L^\infty(I,H^1)} +  \|u\|_{L^4(I,W^{1,4})}.
	$$
	By the Strichartz estimates, we have
	\begin{equation}
	\label{SStr}
	\|u\|_{S^1(I)}\lesssim \|u(T)\|_{H^1}+\|i \partial_t u
	+ \Delta u\|_{L^{\frac{2}{1+2\delta}}(I, W^{1,{\frac{1}{1-\delta}}})},
	\end{equation}
	for any $0<\delta < 1/2$ and $T\in I$. Note that $\left( \frac{2}{1+2\sigma}, \frac{1}{1-\delta}\right)$ is the conjugate pair of the Schr\"odinger admissible pair $\left(\frac{2}{1-2\sigma}, \frac{1}{\sigma}\right)$.
	
	\begin{theo}
		Let $u_0 \in \Sigma$ be such that $\mathcal{H}(u_0)<\frac{2}{(1+b)(2-b)}$. Then the corresponding global solution $u$ to \eqref{NLSexp} satisfies $u \in S^1(\R)$.
	\end{theo}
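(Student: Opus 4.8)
\emph{Starting point.} Since $e^{\alpha|u|^{2}}-1-\alpha|u|^{2}-\tfrac{\alpha^{2}}{2}|u|^{4}=\sum_{k\ge 3}\tfrac{\alpha^{k}}{k!}|u|^{2k}\ge 0$, the conservation law \eqref{hamil} gives $\eta_{0}:=\sup_{t}\|\nabla u(t)\|_{L^{2}}^{2}\le\mathcal{H}(u_{0})<\tfrac{2}{(1+b)(2-b)}<1$ for $0<b<1$, and with \eqref{mass} also $M:=\sup_{t}\|u(t)\|_{H^{1}}<\infty$. I also recall from \cite{BDM} that $u\in C(\R;H^{1})\cap L^{4}_{\mathrm{loc}}(\R;W^{1,4})$. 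The plan is the standard NLS scheme: Strichartz estimates, a continuity argument, and a decomposition of the time axis into finitely many intervals on which the nonlinearity is small thanks to the dispersive decay of Section~\ref{S4}.

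\emph{Reduction.} Fix $\delta\in\big(\tfrac{1-b}{2(1+b)},\tfrac{1-b}{2}\big)$, a nonempty interval inside $(0,\tfrac12)$ exactly when $0<b<1$ (the lower bound will relax the Log–estimate constraint, the upper one will leave room in a Hölder inequality). For the finitely many exponent pairs $(p,q)$ with $q$ large and $p(1-\tfrac2q)>1$ that appear below, I would apply Corollary~\ref{Bound3} on $[T,\infty)$ and $(-\infty,-T]$ and note $\|u\|_{L^{p}([-T,T];L^{q})}\to0$ as $T\to0^{+}$, to cover $\R$ by intervals $I_{1},\dots,I_{L}$ ($L$ independent of $u$, after taking a common refinement of the decompositions) with $\|u\|_{L^{p}(I_{\ell};L^{q})}\le\varepsilon$ for all $\ell$ and all the pairs, $\varepsilon$ to be fixed small. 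Since $\|u\|_{L^{\infty}(\R;H^{1})}=\max_{\ell}\|u\|_{L^{\infty}(I_{\ell};H^{1})}$ and $\|u\|_{L^{4}(\R;W^{1,4})}^{4}=\sum_{\ell}\|u\|_{L^{4}(I_{\ell};W^{1,4})}^{4}$, it suffices to bound $\|u\|_{S^{1}(I_{\ell})}$ uniformly in $\ell$.

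\emph{Bootstrap on one interval.} On $I$ with left endpoint $t_{0}$, set $X(\tau):=\|u\|_{S^{1}([t_{0},\tau])}$: continuous, nondecreasing, finite for finite $\tau$, with $X(t_{0})=\|u(t_{0})\|_{H^{1}}\le M$. By \eqref{SStr},
\[
X(\tau)\le CM+C\big\|N(x,u)\big\|_{L^{2/(1+2\delta)}([t_{0},\tau];W^{1,1/(1-\delta)})},\qquad N(x,u)=|x|^{-b}\big(e^{\alpha|u|^{2}}-1-\alpha|u|^{2}\big)u .
\]
Granting the nonlinear estimate
\[
\big\|N(x,u)\big\|_{L^{2/(1+2\delta)}([t_{0},\tau];W^{1,1/(1-\delta)})}\le C\,\varepsilon^{\sigma_{0}}\big(1+X(\tau)^{\kappa}\big),\qquad \sigma_{0}>0,\quad 0<\kappa<\tfrac{4}{1+b},
\]
one obtains $X(\tau)\le CM+C\varepsilon^{\sigma_{0}}(1+X(\tau)^{\kappa})$; as $\kappa$ is a fixed finite exponent, for $\varepsilon$ small (depending only on $M,b,\delta$) this self‑improves — by Theorem~\ref{cont} when $\kappa>1$, by direct absorption when $\kappa\le1$ — to $X(\tau)\le C'(M,b,\delta)$, uniformly in $\tau$ and $I$, and summing over $\ell$ gives $u\in S^{1}(\R)$.

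\emph{The nonlinear estimate, and where the hypothesis is used.} I would start from $|e^{\alpha\tau}-1-\alpha\tau|\lesssim\tau^{2}e^{\alpha\tau}$, whence $|N(x,u)|\lesssim|x|^{-b}|u|^{5}e^{\alpha|u|^{2}}$ and $|\nabla N(x,u)|\lesssim|x|^{-b}|u|^{4}e^{\alpha|u|^{2}}|\nabla u|+|x|^{-b-1}|u|^{5}e^{\alpha|u|^{2}}$, and then combine four ingredients. (i) Bound $e^{\alpha|u(t,x)|^{2}}$ by $e^{\alpha\|u(t)\|_{L^{\infty}}^{2}}$ and the latter by the Log estimate (Proposition~\ref{LogEst}) with $\beta=\tfrac12$, a small $\mu$, and $\|u(t)\|_{\mu}^{2}\le\eta_{0}+\mu^{2}M^{2}$, $\|u(t)\|_{\mu}\ge\mu\|u_{0}\|_{L^{2}}$: this gives $e^{\alpha\|u(t)\|_{L^{\infty}}^{2}}\lesssim(1+\|u(t)\|_{\mathcal{C}^{1/2}})^{\kappa}\lesssim 1+\|u(t)\|_{W^{1,4}}^{\kappa}$ with $\kappa=\alpha\lambda(\eta_{0}+\mu^{2}M^{2})$ and $\lambda>\tfrac1\pi$; taking $\lambda$ near $\tfrac1\pi$ and $\mu$ small makes $\kappa$ arbitrarily close to $2(2-b)\eta_{0}$, and since $\tfrac{\alpha}{2\pi}=2-b$ and $\eta_{0}\le\mathcal{H}(u_{0})<\tfrac{2}{(1+b)(2-b)}$ this yields $\kappa<\tfrac{4}{1+b}$. (ii) Handle the weights: neither $|x|^{-b}$ nor $|x|^{-b-1}$ lies in a Lebesgue space, so split $\R^{2}$ into the unit ball and its complement and use $|x|^{-b}\in L^{2/b,\infty}$, $|x|^{-b-1}\in L^{2/(b+1),\infty}$ with Hölder in Lorentz spaces; here $\delta<\tfrac{1-b}{2}$ is exactly what leaves positive room for a power $|u|^{m}$, $m\ge4$, in an $L^{q}$ norm with $q>2$. (iii) Control the remaining polynomial factors by Hölder, the Gagliardo–Nirenberg inequality \eqref{GaNi2D} and the renormalised Moser–Trudinger inequalities of Section~\ref{S2} (legitimate since $\eta_{0}<1$), keeping $m$ powers of $|u|$ in the $L^{q}$ norm and bounding $\|\nabla u\|_{L^{2}}\le\eta_{0}^{1/2}$ and everything else by constants depending on $M,\eta_{0}$. (iv) Take the $L^{2/(1+2\delta)}$ norm in time and apply Hölder in $t$, sending the $L^{q}$ powers into $\|u\|_{L^{p}(I;L^{q})}\le\varepsilon$ (which fixes the pairs $(p,q)$) and the $\kappa$ powers of $\|u(t)\|_{W^{1,4}}$ into $X(\tau)^{\kappa}$; this needs $\kappa<2(1+2\delta)$, and $2(1+2\delta)>\tfrac{4}{1+b}$ precisely because $\delta>\tfrac{1-b}{2(1+b)}$, so $\kappa<\tfrac{4}{1+b}<2(1+2\delta)$ closes it. The hard part will be exactly this coupled space–time exponent bookkeeping — most of all the term $\nabla(|x|^{-b})=-b|x|^{-b-2}x$, whose more singular weight $|x|^{-b-1}$ pushes the source time exponent $\tfrac{2}{1+2\delta}$ toward $1$ and thereby imposes the ceiling on $\kappa$; it is in meeting that ceiling, $\kappa<\tfrac{4}{1+b}$, that the assumption $\mathcal{H}(u_{0})<\tfrac{2}{(1+b)(2-b)}$ is used.
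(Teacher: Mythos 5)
Your scheme — Strichartz, dispersive decay to make the nonlinearity $L^p_tL^q_x$-small on finitely many time intervals, the Log estimate, and a continuity argument — is exactly the paper's; what differs is how you budget the exponent that falls out of the Log estimate. The paper treats the weighted exponential through the Lorentz--$L^1$--$L^\infty$ interpolation \eqref{Lor}, extracting a \emph{fractional} power $\theta=\delta+\tfrac{1+b}{2}$ of $\|e^{\alpha|u|^2}-1\|_{L^\infty}$, and then lands on the fixed quadratic bound $\|u\|_{\mathcal{C}^{1/2-\delta/2}}^2$ under the constraint $\alpha\theta\lambda K^2(\mu)<2$, achieved by taking $\delta,\mu$ small. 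You instead keep the full $e^{\alpha\|u\|_{L^\infty}^2}$, obtain a \emph{variable} power $\kappa\approx 2(2-b)\mathcal{H}(u_0)<\tfrac{4}{1+b}$, and absorb $\kappa$ copies of $\|u\|_{W^{1,4}}$ through the time H\"older, which forces you to the opposite end of the admissible range, $\delta\in\big(\tfrac{1-b}{2(1+b)},\tfrac{1-b}{2}\big)$. Algebraically both routes reduce to the same threshold $(1+b)(2-b)\mathcal{H}(u_0)<2$, so either closes the bootstrap, but your version is somewhat cleaner on the $\nabla(|x|^{-b})$ term: ball/complement or Lorentz H\"older plus the Log estimate replace the paper's singular Moser--Trudinger step, so you never need the paper's extra restriction $\delta<\tfrac{1}{2(2-b)}$. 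Two implementation points that must not be lost: (a) the factor $\nabla u$ in the $\mathcal{A}$-type term really has to sit in $L^\infty_t L^2_x$, bounded by conservation (as you indicate with $\|\nabla u\|_{L^2}\le\eta_0^{1/2}$) — if instead you spent a Strichartz $L^{2/\delta}_t$ slot on it, the time-H\"older budget would shrink to $\kappa<2(1+\delta)$, which is never above $\tfrac{4}{1+b}$ on the range $\delta<\tfrac{1-b}{2}$; and (b) the cover of $\R$ is cleaner done as in the paper (tails $[T,\infty)$ and $(-\infty,-S]$ via Corollary~\ref{Bound3}, a compact middle via the local theory and absolute continuity) rather than by letting $T\to0^+$, since the number of intervals Corollary~\ref{Bound3} produces for $[T,\infty)$ blows up as $T\to0$.
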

	
	\begin{proof}
		It suffices to estimate the nonlinear term in some dual Strichartz
		norm as in \eqref{SStr}. We have
		\[
		|\nabla N(x,u)| \lesssim |x|^{-b} |\nabla u| |u|^2 \left({\rm e}^{\alpha|u|^2} - 1\right)+ |x|^{-b-1} |u| \left({\rm e}^{\alpha|u|^2} - 1- \alpha |u|^2 \right):= \mathcal{A}+\mathcal{B}.
		\]
		Let $0<\delta<\frac{1}{2}$ to be chosen adequately, and let $I$ be  a time slab. Let us first estimate the norm $\|\mathcal{A}\|_{L^{\frac{2}{1+2 \delta}}(I,L^{\frac{1}{1-\delta}})}$. By H\"older's inequality,
		\[
		\|\mathcal{A}\|_{L^{\frac{1}{1-\delta}}} \lesssim \|\nabla u\|_{L^{\frac{2}{1-\delta}}} \|u\|_{L^{\frac{4}{\delta}}}^2 \bigg\|\frac{{\rm e}^{\alpha|u|^2}-1}{|x|^{b}} \bigg\|_{L^{\frac{2}{1-2\delta}}}.
		\]
		The term $\bigg\|\frac{{\rm e}^{\alpha|u|^2}-1}{|x|^{b}} \bigg\|_{L^{\frac{2}{1-2\delta}}}$ can be estimated using Lorentz spaces. Indeed, by \eqref{Lor}, we get
		\begin{align*}
			\bigg\|\frac{{\rm e}^{\alpha|u|^2}-1}{|x|^{b}} \bigg\|_{L^{\frac{2}{1-2\delta}}} &\lesssim \|{\rm e}^{\alpha|u|^2}-1\|_{L^{1}}^{1-\theta} \|{\rm e}^{\alpha|u|^2}-1\|_{L^{\infty}}^{\theta} \||x|^{-b}\|_{L^{\frac{2}{b},\infty}}\\
			&\lesssim \|{\rm e}^{\alpha|u|^2}-1\|_{L^{\infty}}^{\theta} ,
		\end{align*}
		where $\theta:=\delta+\frac{1+b}{2}$. Note that we can choose $0<\delta<\frac{1-b}{2}$ so that $\theta \in (0,1)$. Here we have used the Moser-Trudinger inequality \eqref{MT1} to obtain that $\|{\rm e}^{\alpha|u|^2}-1\|_{L^{1}}\lesssim 1$ since $\|\nabla u\|^2_{L^2}  < \mathcal{H}(u_0) <\frac{2}{(1+b)(2-b)}<1$.
		Hence
		\begin{align*}
			\|\mathcal{A}\|_{L^{\frac{2}{1+2 \delta}}(I,L^{\frac{1}{1-\delta}})} &\lesssim \left\|\|\nabla u\|_{L^{\frac{2}{1-\delta}}} \|u\|_{L^{\frac{4}{\delta}}}^2 \|{\rm e}^{\alpha|u|^2}-1\|_{L^{\infty}}^{\theta} \right\|_{L^{\frac{2}{1+2 \delta}}(\Ibold)} \\
			&\mathrel{\phantom{\lesssim}} +\left\|\|\nabla u\|_{L^{\frac{2}{1-\delta}}} \|u\|_{L^{\frac{4}{\delta}}}^2 \|{\rm e}^{\alpha|u|^2}-1\|_{L^{\infty}}^{\theta}\right\|_{L^{\frac{2}{1+2 \delta}}(\Jbold)},
		\end{align*}
		where $\Ibold=\{t \in I / \, \|u(t)\|_{L^{\infty} } \leq 1\}$ and $\Jbold=\{t \in I / \, \|u(t)\|_{L^{\infty} } \geq 1\}$. The first term in the right hand side can be easily estimated as follows
		\begin{align}
		\label{A1}
		\left\|\|\nabla u\|_{L^{\frac{2}{1-\delta}}} \|u\|_{L^{\frac{4}{\delta}}}^2 \|{\rm e}^{\alpha|u|^2}-1\|_{L^{\infty}}^{\theta}\right\|_{L^{\frac{2}{1+2 \delta}}(\Ibold)} &\lesssim 
		\|\nabla u\|_{L^{\frac{2}{\delta}}(I,L^{\frac{2}{1-\delta}})}  \|u\|_{L^{\frac{4}{1+\delta}}(I,L^{\frac{4}{\delta}})}^{2} \nonumber\\
		&\lesssim  \|u\|_{S^1(I)}  \|u\|_{L^{\frac{4}{1+\delta}}(I,L^{\frac{4}{\delta}})}^{2},
		\end{align}
		where the following interpolation inequality is used
		\begin{align} \label{interpolation}
		\|\nabla u\|_{L^{\frac{2}{\delta}}(I,L^{\frac{2}{1-\delta}})} \leq \, \|\nabla u\|_{L^{\infty}(I,L^{2})}^{1-2\delta} \,\|\nabla u\|_{L^{4}(I,L^{4})}^{2\delta}.
		\end{align}
		Let us turn to the second term. For $t \in \Jbold$, we obtain using \eqref{Log} with $\beta = \frac{1}{2}-\frac{\delta}{2}$  that
		\begin{align*}
			\|{\rm e}^{\alpha|u|^2}-1\|_{L^{\infty}}^{\theta} \lesssim \left(1+\frac{\|u\|_{\mathcal{C}^{{1\over 2}-\frac{\delta}{2}}}}{\|u\|_{\mu}}\right)^{\alpha \theta \lambda \|u\|^2_\mu},
		\end{align*}
		for some $0<\mu<1$ and $\lambda>\frac{1}{\pi(1-\delta)}$ to be chosen later.
		Since $\|u\|^2_{\mu} = \|\nabla u\|^2_{L^2} + \mu^2 \|u\|^2_{L^2}< \mathcal{H}(u_0) + \mu^2 \mathcal{M}(u_0)=: K^2(\mu)$, we bound
		\[
		\|{\rm e}^{\alpha|u|^2}-1\|_{L^{\infty}}^{\theta} \lesssim \left(1+\frac{\|u\|_{\mathcal{C}^{{1\over 2}-\frac{\delta}{2}}}}{K(\mu)}\right)^{\alpha \theta \lambda K^2(\mu)}.
		\]
		Since $K^2(\mu) \rightarrow H(u_0)<\frac{2}{(1+b)(2-b)}$ as $\mu \rightarrow 0$, we can choose $0<\mu<1$ sufficiently small so that $K^2(\mu) < \frac{2}{(1+b)(2-b)}$. Moreover, as $\frac{\theta K^2(\mu)}{1-\delta} \rightarrow \frac{1+b}{2} K^2(\mu) < \frac{1}{2-b}$ as $\delta \rightarrow 0$, we choose $0<\delta<\frac{1-b}{2}$ sufficiently small such that $\frac{\theta K^2(\mu)}{1-\delta} < \frac{1}{2-b}$. At final, we choose $\frac{1}{\pi (1-\delta)}<\lambda < \frac{2}{\alpha \theta K^2(\mu)}$ so that $\alpha \theta \lambda K^2(\mu)<2$. It follows that
		\[
		\|{\rm e}^{\alpha|u|^2}-1\|_{L^{\infty}}^{\theta} \lesssim (1+ \|u\|_{\mathcal{C}^{\frac12-\frac{\delta}{2}}})^2 \lesssim \|u\|_{\mathcal{C}^{\frac12-\frac{\delta}{2}}}^{2},
		\]
		where we have used the fact that $\|u(t)\|_{\mathcal{C}^{\frac12-\frac{\delta}{2}}}\geq \|u(t)\|_{L^\infty} \geq 1$ for all $t\in \Jbold$.
		Therefore,
		\begin{align}
		\label{A2}
		\left\|\|\nabla u\|_{L^{\frac{2}{1-\delta}}} \|u\|_{L^{\frac{4}{\delta}}}^2 \|{\rm e}^{\alpha|u|^2}-1\|_{L^{\infty}}^{\theta}\right\|_{L^{\frac{2}{1+2 \delta}}(\Jbold)} &\lesssim
		\left\|\|\nabla u\|_{L^{\frac{2}{1-\delta}}} \|u\|_{L^{\frac{4}{\delta}}}^2 \|u\|_{\mathcal{C}^{\frac12-\frac{\delta}{2}}}^{2} \right\|_{L^{\frac{2}{1+2 \delta}}(I)} \nonumber\\
		&\lesssim  \|\nabla u\|_{L^{\frac{2}{\delta}}(I,L^{\frac{2}{1-\delta}})}  \|u\|_{L^{\frac{2}{\delta}}(I,L^{\frac{4}{\delta}})}^{2} \|u\|_{L^{\frac{4}{1-\delta}}(I,\mathcal{C}^{\frac12-\frac{\delta}{2}})}^{2} \nonumber\\
		&\lesssim \|u\|_{L^{\frac{2}{\delta}}(I,L^{\frac{4}{\delta}})}^{2}\, \|u\|_{S^1(I)}^3.
		\end{align}
		The last estimate follows from \eqref{interpolation} and the fact
		\begin{align*}
			\|u\|_{L^{\frac{4}{1-\delta}}(I,\mathcal{C}^{\frac12-\frac{\delta}{2}})}&\lesssim\|u\|_{L^{\frac{4}{1-\delta}}(I,W^{1, \frac{4}{1+\delta}})}\\
			&\lesssim \|u\|_{L^{\infty}(I,H^1)}^\delta\, \|u\|_{L^{4}(I,W^{1,4})}^{1-\delta}\\
			&\lesssim \|u\|_{S^1(I)}.
		\end{align*}
		Combining inequalities \eqref{A1} and \eqref{A2}, we end up with
		\begin{equation}
		\label{A}
		\|\mathcal{A}\|_{L^{\frac{2}{1+2 \delta}}(I,L^{\frac{1}{1-\delta}})} \lesssim \|u\|_{S^1(I)}  \|u\|_{L^{\frac{4}{1+\delta}}(I,L^{\frac{4}{\delta}})}^{2}+\|u\|_{L^{\frac{2}{\delta}}(I,L^{\frac{4}{\delta}})}^{2} \|u\|_{S^1(I)}^3.
		\end{equation}
		Let us now estimate the term $\|\mathcal{B}\|_{L^{\frac{2}{1+2 \delta}}(I,L^{\frac{1}{1-\delta}})}$. Taking $\frac{1}{1-\delta}<p,q<\infty$ such that $\frac1p+\frac1q=1-\delta$ and applying H\"older's inequality, we get
		\[
		\|\mathcal{B}\|_{L^{\frac{1}{1-\delta}}} \lesssim \left\||x|^{-b-1} |u| \left({\rm e}^{\alpha|u|^2} - 1\right) \right\|_{L^{\frac{1}{1-\delta}}} \lesssim \|u\|_{L^q} \left\|\frac{{\rm e}^{\alpha|u|^2} - 1}{|x|^{b+1}}\right\|_{L^p}.
		\]
		Clearly,
		\[
		\left\|\frac{{\rm e}^{\alpha|u|^2} - 1}{|x|^{b+1}}\right\|_{L^p}^p \lesssim  {\rm e}^{\alpha(p-1) \|u\|_{L^{\infty}}^2}\,\int \frac{{\rm e}^{\alpha|u|^2} - 1}{|x|^{p(b+1)}}dx.
		\]
		Since $\frac{1}{1-\delta}<\frac{2}{b+1}$ for $0<\delta<\frac{1-b}{2}$, we choose $\frac{1}{1-\delta}<p<\frac{2}{b+1}$. Hence we can apply the singular Moser-Trudinger inequality for the term $\mathlarger{\int} \frac{{\rm e}^{\alpha|u|^2} - 1}{|x|^{p(b+1)}}dx$ to obtain
		\begin{align*}
			\|\mathcal{B}\|_{L^{\frac{2}{1+2 \delta}}(I,L^{\frac{1}{1-\delta}})}  \lesssim \left\| \|u\|_{L^q} \,{\rm e}^{\alpha\frac{p-1}{p} \|u\|_{L^{\infty}}^2} \right\|_{L^{\frac{2}{1+2 \delta}}(\Ibold)}  +\left\| \|u\|_{L^q} {\rm e}^{\alpha\frac{p-1}{p} \|u\|_{L^{\infty}}^2} \right\|_{L^{\frac{2}{1+2 \delta}}(\Jbold)}.
		\end{align*}
		Note that the choice of $p$ leads to $q>\frac{2}{1-b-2\delta}$. Therefore,
		\[
		\|\mathcal{B}\|_{L^{\frac{2}{1+2 \delta}}(I,L^{\frac{1}{1-\delta}})} \lesssim \|u\|_{L^{\frac{2}{1+2 \delta}}(I,L^q)}+\|u\|_{L^{\gamma}(I,L^q)} \left\|{\rm e}^{\alpha\frac{p-1}{p} \|u\|_{L^{\infty}}^2} \right\|_{L^{\rho}(\Jbold)},
		\]
		where $\frac{2}{1+2 \delta}<\gamma,\rho<\infty$ such that $\frac{1}{\gamma}+\frac{1}{\rho}=\frac{1+2 \delta}{2}$.
		Let $t \in \Jbold$. An application of the Log estimate \eqref{Log} with $\beta =\frac{1}{2}$ gives
		\[
		{\rm e}^{\alpha\frac{p-1}{p} \|u\|_{L^{\infty}}^2} \lesssim \left( 1+ \frac{\|u\|_{\mathcal{C}^{\frac{1}{2}}}}{\|u\|_\mu} \right)^{\alpha \frac{p-1}{p}\lambda \|u\|_\mu^2},
		\]
		for some $0<\mu<1$ and $\lambda>\frac{1}{\pi}$ to be chosen shortly. Since $\|u\|_\mu^2 < K^2(\mu)$, it follows that
		\[
		{\rm e}^{\alpha\frac{p-1}{p} \|u\|_{L^{\infty}}^2} \lesssim \left( 1+ \frac{\|u\|_{\mathcal{C}^{\frac{1}{2}}}}{K(\mu)} \right)^{\alpha \frac{p-1}{p}\lambda K^2(\mu)}.
		\]		
		Choose $0<\mu<1$ sufficiently small such that $K^2(\mu)<\frac{2}{(1+b)(2-b)}<1$. Since $\frac{1}{1-\delta}<\frac{2(2-b)}{2(2-b)-1}$ for $0<\delta<\frac{1}{2(2-b)}$, we choose $\frac{1}{1-\delta}<p<\frac{2(2-b)}{2(2-b)-1}$. In particular $\frac{p}{2(2-b)(p-1)}>1>K^2(\mu)$. At final, we choose $\frac{1}{\pi}<\lambda<\frac{p}{\alpha (p-1) K^2(\mu)}$ so that $\alpha\frac{p-1}{p} \lambda K^2(\mu)<1$. We thus get
		\[
		{\rm e}^{\alpha \frac{p-1}{p} \|u\|^2_{L^\infty}} \lesssim 1+\|u\|_{\mathcal{C}^{\frac12}} \lesssim \|u\|_{W^{1,4}},
		\]
		where we have used $\|u(t)\|_{W^{1,4}} \gtrsim \|u(t)\|_{\mathcal{C}^{\frac12}} \geq \|u(t)\|_{L^\infty} \geq 1$ for all $t \in \Jbold$. Therefore, choosing $\frac{2}{1+2\delta}<\rho<4$, one gets
		$$
		\|{\rm e}^{\alpha\frac{p-1}{p} \|u\|_{L^\infty}^2}\|_{L^{\rho}(\Jbold)} \lesssim		
		\|u\|_{L^4(I,W^{1,4})}^{\frac{4}{\rho}}.
		$$
		This finally leads to
		\[
		\|\mathcal{B}\|_{L^{\frac{2}{1+2 \delta}}(I,L^{\frac{1}{1-\delta}})} \lesssim \|u\|_{L^{\frac{2}{1+2 \delta}}(I, L^q)}+\|u\|_{L^{\gamma}(I, L^q)} \|u\|_{S^1(I)}^{\frac{4}{\rho}}.
		\]
		Note also that this choice of $p$ leads to $0<\delta<\frac{1}{2(2-b)}$ and $q>\frac{2(2-b)}{1-2\delta(2-b)}$. Arguing similarly for $\| N(x,u)\|_{L^{\frac{2}{1+2 \delta}}(I,L^{\frac{1}{1-\delta}})}$, we conclude that
		\begin{align}
		\nonumber
		\|u\|_{S^1(I)} &\lesssim \|u(T)\|_{H^1}+ \|u\|_{S^1(I)}  \|u\|_{L^{\frac{4}{1+\delta}}(I,L^{\frac{4}{\delta}})}^{2}+\|u\|_{L^{\frac{2}{\delta}}(I,L^{\frac{4}{\delta}})}^{2} \|u\|_{S^1(I)}^3\\
		&\mathrel{\phantom{\lesssim \|u(T)\|_{H^1}}}+\label{uniform bound}\ \|u\|_{L^{\frac{2}{1+2 \delta}}(I, L^q)}+\|u\|_{L^{\gamma}(I,L^q)} \|u\|_{S^1(I)}^{\frac{4}{\rho}},
		\end{align}
		where $0<\delta< \min\left\{\frac{1-b}{2}, \frac{1}{2(2-b)} \right\}$, $\gamma>\frac{2}{1+2\delta}$ and $q>\max\left\{\frac{1}{1-b-2\delta}, \frac{2(2-b)}{1-2\delta(2-b)} \right\}$.
		
		Since $0<\delta< \min\left\{\frac{1-b}{2}, \frac{1}{2(2-b)} \right\}$ and $\gamma>\frac{2}{1+2\delta}$, it is easy to check that the condition \eqref{pq} is satisfied for
		\begin{align} \label{pair-pq}
		(p,q)\in \left\{ \left(\frac{4}{1+\delta}, \frac{4}{\delta}\right), \left(\frac{2}{\delta}, \frac{4}{\delta}\right), \left(\frac{2}{1+2\delta},q\right), (\gamma, q)\,\right\}
		\end{align}
		provided that $q$ satisfies an additional condition $q>\frac{4}{1-2\delta}$. We thus obtain by Corollary \ref{Bound1} that $\|u\|_{L^{p}((a,\infty),L^q)}< \infty$ for all $a>0$ and $(p,q)$ in \eqref{pair-pq}.
		
		Now let $0<\delta< \min\left\{\frac{1-b}{2}, \frac{1}{2(2-b)} \right\}$, $\gamma>\frac{2}{1+2\delta}$ and $q>\max\left\{\frac{1}{1-b-2\delta}, \frac{2(2-b)}{1-2\delta(2-b)}, \frac{4}{1-2\delta} \right\}$. We choose $T>0$ so that $\|u\|_{L^{\frac{4}{1+\delta}}((T,\infty),L^{\frac{4}{\delta}})}^2 \leq \frac12$. Set $A:=\|u\|_{L^{\frac{2}{1+2 \delta}}((T,\infty),L^q)} < \infty$. We have
		\begin{align*}
			\|u\|_{S^1((T,\infty))}& \lesssim \|u(T)\|_{H^1}+ \|u\|_{S^1((T,\infty))}  \|u\|_{L^{\frac{4}{1+\delta}}((T,\infty),L^{\frac{4}{\delta}})}^{2}+\|u\|_{L^{\frac{2}{\delta}}((T,\infty),L^{\frac{4}{\delta}})}^{2} \|u\|_{S^1((T,\infty))}^3\\
			& \mathrel{\phantom{\lesssim \|u(T)\|_{H^1}}} +\|u\|_{L^{\frac{2}{1+2 \delta}}((T,\infty),L^q)}+\|u\|_{L^{\gamma}((T,\infty),L^q)} \|u\|_{S^1((T,\infty))}^{\frac{4}{\rho}}.
		\end{align*}
		Bounding $\|u(T)\|_{H^1}$ by a constant $C(\mathcal{H}(u_0)+\mathcal{M}(u_0)))$ depending only on the mass and energy of the initial data, one infers
		\begin{align*}
			\|u\|_{S^1((T,\infty))}& \lesssim C(\mathcal{H}(u_0)+\mathcal{M}(u_0)))+ A+\|u\|_{L^{\frac{2}{\delta}}((T,\infty),L^{\frac{4}{\delta}})}^{2} \|u\|_{S^1((T,\infty))}^3\\
			& \mathrel{\phantom{\lesssim C(\mathcal{H}(u_0)+\mathcal{M}(u_0)))+ A}}  +\|u\|_{L^{\gamma}((T,\infty),L^q)} \|u\|_{S^1((T,\infty))}^{\frac{4}{\rho}}.
		\end{align*}
		Using Corollary \ref{Bound3}, one can pick $\varepsilon>0$ small (to be determined later) and a finite number of intervals $\{I_\ell\}_{\ell=1,2, \cdots, L} $, $ I_\ell \subset (T,\infty)$ such that , for all $\ell$
		$$\|u\|_{L^{\gamma}(I_\ell,L^q)}, \quad \|u\|_{L^{\frac{2}{\delta}}(I_\ell,L^{\frac{4}{\delta}})} \quad \leq \varepsilon.$$
		Thus, by \eqref{uniform bound} and since ${4\over \rho}>1$, we get
		\begin{align*}
			\|u\|_{S^1(I_\ell)} &\lesssim C(\mathcal{H}(u_0)+\mathcal{M}(u_0))+ A+\varepsilon^2 \|u\|_{S^1(I_\ell)}^3 +\varepsilon \|u\|_{S^1(I_\ell)}^{\frac{4}{\rho}}.
		\end{align*}
		A continuity argument allows us to pick $\varepsilon>0$ sufficiently small depending only on $C(\mathcal{H}(u_0)+\mathcal{M}(u_0))+ A$ such that $\|u\|_{S^1(I_\ell)} \leq C(\mathcal{H}(u_0),\mathcal{M}(u_0), A)$. Since the number of intervals is finite and the conclusion can be made for all $I_\ell$'s, we get $\|u\|_{S^1((T,\infty))}<\infty$. A similar argument applies for negative times, and we get $\|u\|_{S^1((-\infty,-S))}<\infty$ for some $S>0$. We conclude the proof by the local theory.
	\end{proof}

	\section{Global bounds 2}
	\label{S6}
	In this section, we prove the second global bound in \eqref{glo-bou}. For a time slab $I \subset \R$, we define $S^0(I)$ by
	\[
	\|u\|_{S^0(I)} = \|u\|_{L^\infty(I, L^2)} + \|u\|_{L^4(I, L^4)}.
	\]
	\begin{theo} \label{theo-glo-bou-2}
		Let $u_0 \in \Sigma$ be such that $\mathcal{H}(u_0)<\frac{2}{(1+b)(2-b)}$. Let $u$ the corresponding global solution to \eqref{NLSexp} and set $w(t):=(x+2it\nabla)u(t)$. Then it holds that $w \in S^0(\R)$.
	\end{theo}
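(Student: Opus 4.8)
The plan is to repeat, for $w=(x+2it\nabla)u$, the scheme used in Section~\ref{S5} for $u$. The operator $J:=x+2it\nabla$ commutes with the Schr\"odinger operator, $[i\partial_t+\Delta,J]=0$, so $w$ solves
\[
i\partial_t w+\Delta w=JN(x,u),\qquad N(x,u)=|x|^{-b}\bigl({\rm e}^{\alpha|u|^2}-1-\alpha|u|^2\bigr)u .
\]
Using $Jf=2it\,{\rm e}^{i|x|^2/4t}\nabla\bigl({\rm e}^{-i|x|^2/4t}f\bigr)$, the gauge invariance of $N$ in $u$, the Leibniz rule, and the identity $xu=w-2it\nabla u$, one obtains the pointwise bound
\begin{align*}
|JN(x,u)| &\lesssim |t|\,|x|^{-b-1}g_1(|u|^2)|u| + |t|\,|x|^{-b}g_1(|u|^2)|\nabla u| \\
&\quad + |t|\,|x|^{-b}g_2(|u|^2)|u|^2|\nabla u| + |x|^{-b}g_1(|u|^2)\bigl(|w|+|t|\,|\nabla u|\bigr),
\end{align*}
where $g_1(\tau)={\rm e}^{\alpha\tau}-1-\alpha\tau$ and $g_2(\tau)={\rm e}^{\alpha\tau}-1$. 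Since Corollary~\ref{KK} together with $G\le0$ already gives $\|w\|_{L^\infty(\R,L^2)}\le\|xu_0\|_{L^2}<\infty$, it only remains to bound $\|w\|_{L^4(\R,L^4)}$, and for this the $L^2$-level Strichartz estimates give, on any time slab $I$ with $T\in I$ and $0<\delta<\tfrac12$,
\[
\|w\|_{S^0(I)}\lesssim\|w(T)\|_{L^2}+\|JN(x,u)\|_{L^{\frac{2}{1+2\delta}}\bigl(I,L^{\frac1{1-\delta}}\bigr)} .
\]

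Next I would estimate each of the four pieces in that dual Strichartz norm, splitting $I=\Ibold\cup\Jbold$ according to whether $\|u(t)\|_{L^\infty}\le1$ or not, exactly as in Section~\ref{S5}. The weights are absorbed via Lorentz spaces: $|x|^{-b}\in L^{\frac2b,\infty}$ and, crucially, $|x|^{-b-1}\in L^{\frac2{b+1},\infty}$ with $\frac2{b+1}>1$ precisely because $0<b<1$; this is the only place the upper bound $b<1$ is genuinely used. On $\Ibold$ one has $g_1(|u|^2)\lesssim|u|^4$ and $g_2(|u|^2)\lesssim|u|^2$, so each term reduces, after H\"older in $x$, to a polynomial expression in $u$ of degree at least $5$ (or degree $4$ with one extra factor $\nabla u$) multiplied by at most one power of $|t|$; the decay estimate $\|u(t)\|_{L^q}\lesssim|t|^{-(1-2/q)}$ of Section~\ref{S4}, together with $\|\nabla u\|_{L^4(\R,L^4)}<\infty$, makes the corresponding time integrals converge over $(T,\infty)$ — these are exactly quantities controlled by Corollaries~\ref{Bound1} and~\ref{Bound3}. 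The piece carrying $|w|$ reproduces $\|w\|_{S^0(I)}$ on the right-hand side with a coefficient that is a product of such $u$-norms.

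On $\Jbold$ the exponential cannot be absorbed by polynomial decay; here I would invoke the Log estimate (Proposition~\ref{LogEst}) with $\beta=\tfrac12-\tfrac\delta2$, as in Section~\ref{S5}. The hypothesis $\mathcal{H}(u_0)<\frac2{(1+b)(2-b)}$, via the conservation laws and $\|u\|_\mu^2<K^2(\mu)$ for $\mu$ small, lets one choose $\mu$, $\delta$ small and $\lambda>\frac1{\pi(1-\delta)}$ so that the exponent produced by \eqref{Log} stays below $2$ — and below $1$ for the terms carrying the weight $|x|^{-b-1}$ — whence ${\rm e}^{\alpha|u|^2}-1\lesssim\|u\|_{\mathcal{C}^{1/2-\delta/2}}^{2}\lesssim\|u\|_{W^{1,4}}^{2}$ on $\Jbold$. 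Since $\|u\|_{S^1(\R)}<\infty$ is already known from Section~\ref{S5}, these contributions are finite in the relevant $L^p_t$ norms through interpolations such as $\|u\|_{L^{4/(1-\delta)}(I,\mathcal{C}^{1/2-\delta/2})}\lesssim\|u\|_{S^1(I)}$.

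Collecting the estimates, on a slab $I\subset(T,\infty)$ one reaches an inequality of the form $\|w\|_{S^0(I)}\lesssim\|w(T)\|_{L^2}+A(I)+B(I)\,\|w\|_{S^0(I)}$, where $A(I)$ gathers the $w$-free contributions (finite over $(T,\infty)$ by the previous two paragraphs) and $B(I)$ is a finite sum of products $\|u\|_{L^p(I,L^q)}\|u\|_{S^1(I)}^{k}$ with $0\le k\le3$ and $(p,q)$ satisfying \eqref{pq}. By Corollary~\ref{Bound3} one splits $(T,\infty)$ into finitely many slabs $I_1,\dots,I_L$ on which $B(I_\ell)$ is as small as desired; since $\|w(t)\|_{L^2}\le\|xu_0\|_{L^2}$ uniformly, the continuity argument (Theorem~\ref{cont}) yields a uniform bound for $\|w\|_{S^0(I_\ell)}$ on each slab, and summing the finitely many slabs — then repeating for negative times and filling the remaining compact interval by the local theory — gives $w\in S^0(\R)$. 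The main obstacle is the first term, $|t|\,|x|^{-b-1}g_1(|u|^2)|u|$: it forces at once the more singular weight $|x|^{-b-1}$ (where $b<1$ enters) and an extra power of $|t|$, so it can only be closed by trading that power of $|t|$ against the fast time decay of a high power of $u$, while simultaneously taming the exponential on $\Jbold$ through the Log estimate as above.
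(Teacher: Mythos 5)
Your overall scheme — Duhamel for $w$, Strichartz in a dual norm, Lorentz absorption of $|x|^{-b}$ and $|x|^{-b-1}$, the split of the time slab into the sets where $\|u(t)\|_{L^\infty}\leq1$ or $\geq1$, the Log estimate with a parameter hunt on the latter, the new decay condition compensating the extra power of $|s|$, and Corollary~\ref{Bound3} together with the continuity argument — is essentially the paper's. However, your pointwise bound for $|JN(x,u)|$ contains an error that would prevent the bootstrap from closing. You list the terms $|t|\,|x|^{-b}g_1(|u|^2)|\nabla u|$, $|t|\,|x|^{-b}g_2(|u|^2)|u|^2|\nabla u|$, and $|x|^{-b}g_1(|u|^2)|t||\nabla u|$. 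None of these should survive. Writing $N(x,u)=|x|^{-b}F(|u|^2)u$ with $F(\tau)={\rm e}^{\alpha\tau}-1-\alpha\tau$, a direct computation gives
\[
JN(x,u)=|x|^{-b}F(|u|^2)\,w + 2it\,\nabla(|x|^{-b})\,F(|u|^2)\,u + 2it\,|x|^{-b}F'(|u|^2)\,\nabla(|u|^2)\,u,
\]
because the two $|x|^{-b}F\,\nabla u$ contributions — one coming from $xN$ via $xu=w-2it\nabla u$, the other from $2it\nabla N$ — cancel exactly. Moreover, $\nabla(|u|^2)=\nabla(|v|^2)$ with $v={\rm e}^{-i|x|^2/4t}u$, and $2|t||\nabla v|=|w|$, so the last term is controlled by $|x|^{-b}F'(|u|^2)|u|^2|w|$, never by $|t||\nabla u|$. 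This is exactly what the gauge identity $|(x+2is\nabla)N(x,u)|=2|s||\nabla N(x,v)|$ encodes, and it is why the paper is left with only two contributions $\mathcal{C}$ and $\mathcal{D}$, both proportional to $|w|$ or to $|s||u|^3$. Your $|t||\nabla u|$ terms are not bounded in the required spacetime norms: energy conservation gives $\|\nabla u(t)\|_{L^2}$ bounded but not decaying, so $\sup_t\,|t|\,\|\nabla u(t)\|_{L^2}=\infty$, and the interpolation you would need, $\|t\nabla u\|_{L^{2/\delta}(I,L^{2/(1-\delta)})}\lesssim\|t\nabla u\|^{1-2\delta}_{L^\infty(I,L^2)}\|t\nabla u\|^{2\delta}_{L^4(I,L^4)}$, diverges. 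So taken literally your decomposition cannot close; you must use the gauge cancellation.

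The second gap is the phrase "filling the remaining compact interval by the local theory". On $[-S,T]$ the time-decay estimate is useless near $t=0$, and the paper devotes the second half of the proof to it. It partitions $[-S,T]$ into subintervals $J_k$ of small length, uses the elementary inequality $\tau({\rm e}^\tau-1)\leq\frac1\eta\bigl({\rm e}^{(1+\eta)\tau}-1\bigr)$ for small $\eta>0$, applies the singular Moser--Trudinger inequality again under the constraint $\|\nabla(\sqrt{1+\eta}\,u)\|_{L^2}<1$ (which needs $\eta<\mathcal{H}(u_0)^{-1}-1$), exploits the smallness of $|J_k|$ in place of time decay, and propagates $\|H(t_k)u(t_k)\|_{L^2}$ by induction across the finitely many $J_k$. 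This is a genuine argument, not an appeal to local well-posedness; it should be carried out, not asserted. With the gauge cancellation fixed and the compact-interval iteration supplied, your plan does reproduce the paper's proof.
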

	\begin{proof}
		Let $T>0$ and set $I=(T,\infty)$. Since $x+2it\nabla$ commutes with $i\partial_t+\Delta$, the Duhamel formula implies
		\[
		w(t)={\rm e}^{i(t-T)\Delta}w(T)-i\int_{T}^{t} {\rm e}^{i(t-s)\Delta} (x+2is\nabla)N(x,u) \, ds.
		\]
		Let $v(t,x):={\rm e}^{-i\frac{|x|^2}{4t}}u(t,x)$. It is easy to see that $|v|=|u|$, $|(x+2it\nabla)N(x,u)|=2|t| |\nabla N(x,v)|$ and $2|t| |\nabla v|=|w|$. 
		We thus bound
		\begin{align*}
		|(x+2is\nabla)N(x,u)|&
		\lesssim |x|^{-b} (2 |s||\nabla v| )|u|^2 \left({\rm e}^{\alpha|u|^2} - 1\right)+ |x|^{-b-1} (2 |s||u|^3) \left({\rm e}^{\alpha|u|^2} - 1\right)
		:= \mathcal{C}+\mathcal{D},
		\end{align*}
		where we have used the fact that for all $x\geq 0$, ${\rm e}^x-1-x\leq x({\rm e}^x-1)$. It follows from the pseudo-conformal law that $\|w(T)\|_{L^2} \leq \|xu_0\|_{L^2}$. Thus, Strichartz estimate yields
		\begin{align*}
		\|w\|_{S^0(I)} &\lesssim \|xu_0\|_{L^2}+\| (x+2is\nabla)N(x,u)\|_{L^{\frac{2}{1+2 \delta}}(I,L^{\frac{1}{1-\delta}})} \\
		&\lesssim \|xu_0\|_{L^2}+ \| 2|s| |\nabla N(x,v)| \|_{L^{\frac{2}{1+2 \delta}}(I,L^{\frac{1}{1-\delta}})}.
		\end{align*}
		As above, one gets
		\[
		\|\mathcal{C}\|_{L^{\frac{2}{1+2 \delta}}(I,L^{\frac{1}{1-\delta}})} \lesssim \|w\|_{S^0(I)} \|u\|_{L^{\frac{4}{1+\delta}}(I,L^{\frac{4}{\delta}})}^{2}+\|u\|_{L^{\frac{2}{\delta}}(I,L^{\frac{4}{\delta}})}^{2} \|w\|_{S^0(I)}\|u\|_{S^1(I)}^2
		\]
		and
		\[
		\|\mathcal{D}\|_{L^{\frac{2}{1+2 \delta}}(I,L^{\frac{1}{1-\delta}})} \lesssim \||s||u|^3\|_{L^{\frac{2}{1+2 \delta}}(I,L^q)}+\||s||u|^3\|_{L^{\gamma}(I, L^q)} \|u\|_{S^1(I)}^{\frac{4}{\rho}},
		\]
		where $0<\delta< \min\left\{\frac{1-b}{2}, \frac{1}{2(2-b)} \right\}$, $\gamma>\frac{2}{1+2\delta}$ and $q>\max\left\{\frac{1}{1-b-2\delta}, \frac{2(2-b)}{1-2\delta(2-b)} \right\}$. The last inequality can be written as
		\[
		\|\mathcal{D}\|_{L^{\frac{2}{1+2 \delta}}(I,L^{\frac{1}{1-\delta}})} \lesssim \||s|^{\frac13}|u|\|_{L^{\frac{6}{1+2 \delta}}(I,L^{3q})}^3+\||s|^{\frac13}|u|\|_{L^{3\gamma}(I,L^{3q})}^3 \|u\|_{S^1(I)}^{\frac{4}{\rho}}.
		\]
		As in Corollary $\ref{Bound1}$, we note that for all $a>0$, the norm $\||s|^{\frac{1}{3}} u\|_{L^m((a,\infty),L^n)}<\infty$ provided that $1 \leq m <\infty$, $2 \leq n <\infty$ satisfying
		\begin{align} 
		\label{mn}
		m\left(\frac{2}{3}-\frac{2}{n}\right)>1.
		\end{align}
		Since $\gamma>\frac{2}{1+2\delta}$, the condition \eqref{mn} is fulfilled for $(m,n) = \left\{ \left(\frac{6}{1+2\delta},3q\right), (3\gamma, 3q)\right\}$ provided that $q>\frac{4}{3-2\delta}$. Under the conditions 
		$$0<\delta< \min\left\{\frac{1-b}{2}, \frac{1}{2(2-b)} \right\}, \quad \gamma>\frac{2}{1+2\delta}, \quad q>\max\left\{\frac{1}{1-b-2\delta}, \frac{2(2-b)}{1-2\delta(2-b)}, \frac{4}{3-2\delta} \right\},$$ 
		we argue as above to obtain $\|w\|_{S^0((T,\infty))}<\infty$ for some $T>0$. By the same argument, we prove as well that $\|w\|_{S^0((-\infty,-S))}<\infty$ for some $S>0$. It remains to show that $w \in S^0([-S,T])$. The proof of the latter claim follows the same argument as in \cite{TVZ}. To see this, set $H(t)=x+2it\nabla$. We are going to prove that $\|Hu\|_{S^0([-S,T])}<\infty$. Divide $[-S,T]$ into a finite number of intervals $J_k=[t_k,t_{k+1}]$ such that $|J_k|\leq \varepsilon$, where $\varepsilon>0$ is to be chosen later. The Duhamel formula reads
		\[
		H(t)u(t)={\rm e}^{i(t-t_k)\Delta}H(t_k)u(t_k)-i\int_{t_k}^{t} {\rm e}^{i(t-s)\Delta} H(s)N(x,u) \, ds.
		\]
		By Strichartz estimates,
		\begin{align*}
		\|Hu\|_{S^0(J_k)} &\lesssim \|H(t_k)u(t_k)\|_{L^2}+\| H(s)N(x,u)\|_{L^{\frac{2}{1+2 \delta}}(J_k,L^{\frac{1}{1-\delta}})} \\
		&\lesssim \|H(t_k)u(t_k)\|_{L^2}+ \| 2|s| |\nabla N(x,v)| \|_{L^{\frac{2}{1+2 \delta}}(J_k,L^{\frac{1}{1-\delta}})}.
		\end{align*}
		Note that in the following all constants involved in $\lesssim$ are independent of $k$.	Using the fact that, for all $x\geq 0$ and all $\eta>0$, $x({\rm e}^x-1) \leq \frac{{\rm e}^{(1+\eta)x}-1}{\eta}$, and that $|v|=|u|$, we bound
		\[
		2 |s| |\nabla N(x,v)| \lesssim_{\eta} |x|^{-b} (2 |s||\nabla v| ) \left({\rm e}^{\alpha (1+\eta)|u|^2} - 1\right)+ |x|^{-b-1} (2 |s||u|) \left({\rm e}^{\alpha (1+\eta) |u|^2} - 1\right).
		\]
		The first term in the right hand side is estimated as follows. By H\"older's inequality, 
		\[
		\left\|2 |s||\nabla v|  \frac{{\rm e}^{\alpha (1+\eta)|u|^2}-1}{|x|^{b}} \right\|_{L^{\frac{1}{1-\delta}}} \lesssim \|2 |s||\nabla v|\|_{L^{\frac{2}{1-\delta}}} \bigg\|\frac{{\rm e}^{\alpha (1+\eta)|u|^2}-1}{|x|^{b}}\bigg\|_{L^{\frac{2}{1-\delta}}}.
		\]
		Hence
		\[
		\left\| 2 |s||\nabla v|  \frac{{\rm e}^{\alpha (1+\eta) |u|^2}-1}{|x|^{b}}\right\|_{L^{\frac{2}{1+2 \delta}}(J_k,L^{\frac{1}{1-\delta}})} \lesssim \|2 |s||\nabla v|\|_{L^{\frac{2}{\delta}}(J_k,L^{\frac{2}{1-\delta}})} \bigg\|\frac{{\rm e}^{\alpha (1+\eta)|u|^2}-1}{|x|^{b}}\bigg\|_{L^{\frac{2}{1+\delta}}(J_k,L^{\frac{2}{1-\delta}})}.
		\]
		By \eqref{interpolation}, $\|2 |s||\nabla v|\|_{L^{\frac{2}{\delta}}(J_k,L^{\frac{2}{1-\delta}})}=\|Hu\|_{L^{\frac{2}{\delta}}(J_k,L^{\frac{2}{1-\delta}})} \lesssim \|Hu\|_{S^0(J_k)}$. Write
		\[
		\bigg\|\frac{{\rm e}^{\alpha (1+\eta) |u|^2} - 1}{|x|^{b}}\bigg\|_{L^{\frac{2}{1-\delta}}}^{\frac{2}{1-\delta}} \lesssim  \left({\rm e}^{\alpha (1+\eta) \|u\|_{L^{\infty}}^2}-1\right)^{\frac{1+\delta}{1-\delta} }\int \frac{{\rm e}^{\alpha (1+\eta)|u|^2} - 1}{|x|^{\frac{2b}{1-\delta}}} dx.
		\]
		Since $\frac{2b}{1-\delta} \rightarrow 2b <2$ as $\delta \rightarrow 0$ and $\frac{1}{\mathcal{H}(u_0)}>1$, one can choose $0<\delta<\frac12$ and $\eta>0$ sufficiently small such that $\frac{2b}{1-\delta}<2$ and $0<\eta<\frac{1}{\mathcal{H}(u_0)}-1$. This guarantees that $\|\nabla (\sqrt{1+\eta} \, u)\|_{L^2} \leq \sqrt{1+\eta} \sqrt{\mathcal{H}(u_0)} <1$. Hence we can apply the singular Moser-Trudinger inequality for the term $\mathlarger{\int} \frac{{\rm e}^{\alpha (1+\eta)|u|^2} - 1}{|x|^{\frac{2b}{1-\delta}}} dx$. Thus
		\[
		\bigg\|\frac{{\rm e}^{\alpha (1+\eta)|u|^2}-1}{|x|^{b}}\bigg\|_{L^{\frac{2}{1+\delta}}(J_k,L^{\frac{2}{1-\delta}})} \lesssim \|{\rm e}^{\alpha (1+\eta) \|u\|_{L^{\infty}}^2}-1\|_{L^1(\Ibold_k)}^{\frac{1+\delta}{2}} + \|{\rm e}^{\alpha (1+\eta) \|u\|_{L^{\infty}}^2}-1\|_{L^1(\Jbold_k)}^{\frac{1+\delta}{2}},
		\]
		where $\Ibold_k := \{t \in J_k / \, \|u(t)\|_{L^{\infty} }  \leq 1 \}$ and $\Jbold_k:= \{t \in J_k / \, \|u(t)\|_{L^{\infty} }  \geq 1 \}$.	Let $t \in \Ibold_k$. We have
		\[
		{\rm e}^{\alpha (1+\eta) \|u\|_{L^{\infty}}^2}-1 \lesssim_{\alpha,\eta} 1.
		\]
		Thus
		\[
		\|{\rm e}^{\alpha (1+\eta)\|u\|_{L^{\infty}}^2}-1\|_{L^{1}(\Ibold_k)} \lesssim |J_k|.
		\]
		Let $t \in \Jbold_k$. An application of the Log estimate \eqref{Log} gives
		\[
		\|{\rm e}^{\alpha (1+\eta) |u|^2}-1\|_{L^{\infty}} \lesssim \left(1+\frac{\|u\|_{\mathcal{C}^{\frac12}}}{K(\mu)}\right)^{\alpha (1+\eta) \lambda K^2(\mu)},
		\]
		where $K^2(\mu)=\mathcal{H}(u_0)+\mu^2 \mathcal{M}(u_0)$, $0<\mu<1$ and $\lambda>\frac{1}{\pi}$. Choose $\frac{4}{b+1}<\sigma<4$. We next choose $0<\mu<1$ sufficiently small such that $K^2(\mu)<\frac{\sigma}{2(2-b)}$. This is possible since $K^2(\mu) \rightarrow \mathcal{H}(u_0) <\frac{2}{(1+b)(2-b)}<\frac{\sigma}{2(2-b)}$. Choose $\eta>0$ sufficiently small such that $1+\eta < \frac{\sigma}{2(2-b)K^2(\mu)}$. Thus $1<\frac{\sigma}{2(2-b)(1+\eta)K^2(\mu)}$. One can thus choose $\frac{1}{\pi}<\lambda<\frac{\sigma}{\alpha(1+\eta)K^2(\mu)}$ so that $\alpha (1+\eta) \lambda K^2(\mu)<\sigma$. As above, one comes to
		\[
		\left\|{\rm e}^{\alpha (1+\eta)\|u\|_{L^{\infty}}^2}-1 \right\|_{L^{1}(\Jbold_k)} \lesssim |J_k|^{1-\frac{\sigma}{4}}\|u\|_{L^4(J_k,W^{1,4})}^{\sigma}.
		\]
		Therefore,
		\[
		\bigg\|\frac{{\rm e}^{\alpha (1+\eta)|u|^2}-1}{|x|^{b}}\bigg\|_{L^{\frac{2}{1+\delta}}(J_k,L^{\frac{2}{1-\delta}})} \lesssim |J_k|^{\frac{1+\delta}{2}}+|J_k|^{\left(1-\frac{\sigma}{4}\right)\frac{1+\delta}{2}}\|u\|_{S^1(J_k)}^{\frac{(1+\delta)\sigma}{2}}.
		\]
		Conclusion
		\[
		\left\| 2 |s||\nabla v|  \frac{{\rm e}^{\alpha (1+\eta)|u|^2}-1}{|x|^{b}}\right\|_{L^{\frac{2}{1+2 \delta}}(J_k,L^{\frac{1}{1-\delta}})} \lesssim \|Hu\|_{S^0(J_k)} \left(|J_k|^{\frac{1+\delta}{2}}+|J_k|^{\left(1-\frac{\sigma}{4}\right)\frac{1+\delta}{2}}\|u\|_{S^1(J_k)}^{\frac{(1+\delta)\sigma}{2}}\right).
		\]
		For the second term, we estimate
		\[
		\left\|  |s||u|  \frac{{\rm e}^{\alpha (1+\eta) |u|^2}-1}{|x|^{b+1}}\right\|_{L^{\frac{2}{1+2 \delta}}(J_k,L^{\frac{1}{1-\delta}})} \lesssim \| |s| |u|\|_{L^{\gamma}(J_k, L^q)} \bigg\|\frac{{\rm e}^{\alpha (1+\eta) |u|^2}-1}{|x|^{b+1}}\bigg\|_{L^{\rho} (J_k,L^{p})},
		\]
		where $\frac{1}{p}+\frac{1}{q}=1-\delta$ and $\frac{1}{\gamma}+\frac{1}{\rho}=\frac{1+2\delta}{2}$. Write
		\[
		\bigg\|\frac{{\rm e}^{\alpha (1+\eta)|u|^2} - 1}{|x|^{b+1}}\bigg\|_{L^p}^p \lesssim  \left({\rm e}^{\alpha (1+\eta)\|u\|_{L^{\infty}}^2}-1\right)^{p-1} \int \frac{{\rm e}^{\alpha (1+\delta) |u|^2} - 1}{|x|^{p(b+1)}}dx.
		\]
		Since $\frac{1}{1-\delta} \rightarrow 1 <\frac{2}{b+1}$ and $\frac{1}{\mathcal{H}(u_0)}>1$, one can choose $0<\delta<\frac12$ and $\eta>0$ sufficiently small such that $\frac{1}{1-\delta}<\frac{2}{b+1}$  and $0<\eta<\frac{1}{\mathcal{H}(u_0)}-1$. This guarantees that $\|\nabla (\sqrt{1+\eta} \, u)\|_{L^2} \leq \sqrt{1+\eta} \sqrt{\mathcal{H}(u_0)} <1$. Choose $\frac{1}{1-\delta}<p<\frac{2}{b+1}$. Hence we can apply the singular Moser-Trudinger inequality for the term $\mathlarger{\int} \frac{{\rm e}^{\alpha (1+\eta) |u|^2} - 1}{|x|^{p(b+1)}}dx$.
		Thus
		\[
		\bigg\|\frac{{\rm e}^{\alpha (1+\eta) |u|^2}-1}{|x|^{b+1}}\bigg\|_{L^{\rho}(J_k, L^{p})} \lesssim \bigg\|\left({\rm e}^{\alpha (1+\eta) \|u\|_{L^{\infty}}^2}-1\right)^{\frac{p-1}{p}} \bigg\|_{L^{\rho}(\Ibold_k)} + \bigg\|\left({\rm e}^{\alpha (1+\eta) \|u\|_{L^{\infty}}^2}-1\right)^{\frac{p-1}{p}} \bigg\|_{L^{\rho}(\Jbold_k)}.
		\]
		Let $t \in \Ibold_k$. Since
		\[
		\left({\rm e}^{\alpha (1+\eta) \|u\|_{L^{\infty}}^2}-1 \right)^{\frac{p-1}{p}\rho}  \lesssim_{\alpha,\eta, p, \rho} 1,
		\]
		we get
		\[
		\bigg\|\left({\rm e}^{\alpha (1+\eta) \|u\|_{L^{\infty}}^2}-1\right)^{\frac{p-1}{p}} \bigg\|_{L^{\rho}(\Ibold_k)}^{\rho} \lesssim |J_k|.
		\]
		Let $t \in \Jbold_k$. An application of the Log estimate \eqref{Log} with $\beta=\frac12$ gives
		\[
		\|{\rm e}^{\alpha (1+\eta)|u|^2}-1\|_{L^{\infty}}^{\frac{p-1}{p}\rho} \lesssim \left(1+\frac{\|u\|_{\mathcal{C}^{\frac12}}}{K(\mu)}\right)^{\alpha (1+\eta)  \frac{p-1}{p}\rho \lambda K^2(\mu)},
		\]
		for some $0<\mu<1$ and $\lambda>\frac{1}{\pi}$. Choose $0<\mu<1$ sufficiently small such that $K^2(\mu)<1$. Since $\frac{1}{1-\delta} \rightarrow 1 <\frac{2(2-b)}{2(2-b)-1}$ as $\delta\rightarrow 0$, one can choose $0<\delta<\frac12$ sufficiently small such that $\frac{1}{1-\delta}<\frac{2(2-b)}{2(2-b)-1}$. Choose $\frac{1}{1-\delta}<p<\frac{2(2-b)}{2(2-b)-1}$. In particular $\frac{p}{2(2-b)(p-1)}>1>K^2(\mu)$. Choose $0<\eta<1$ such that $1+\eta<\frac{p}{2(2-b)(p-1)K^2(\mu)}$. At final, we choose $\frac{1}{\pi}<\lambda<\frac{p}{\alpha (1+\eta) (p-1) K^2(\mu)}$. Therefore, choosing $\frac{2}{1+2\delta}<\rho<4$, one gets
		\[
		\|{\rm e}^{\alpha (1+\eta) |u|^2}-1\|_{L^{\infty}}^{\frac{p-1}{p}\rho} \lesssim \left(1+\|u\|_{\mathcal{C}^{\frac12}}\right)^{4}.
		\]
		Using the fact that $1 \leq \|u(t)\|_{L^{\infty}} \leq \|u(t)\|_{\mathcal{C}^{\frac12}} \leq \|u(t)\|_{W^{1,4}}$ for all $t\in \Jbold_k$, one gets
		\[
		\|{\rm e}^{\alpha (1+\eta)|u|^2}-1\|_{L^{\infty}}^{\frac{p-1}{p}\rho} \lesssim \|u\|_{W^{1,4}}^{4}.
		\]
		We come to
		\[
		\bigg\|\left({\rm e}^{\alpha (1+\eta) \|u\|_{L^{\infty}}^2}-1\right)^{\frac{p-1}{p}} \bigg\|_{L^{\rho}(\Jbold_k)}^{\rho} \lesssim \|u\|_{L^4(J_k,W^{1,4})}^{4}.
		\]
		Thus
		\[
		\bigg\|\frac{{\rm e}^{\alpha (1+\eta) |u|^2}-1}{|x|^{b+1}}\bigg\|_{L^{\rho}(J_k, L^{p})} \lesssim \left(|J_k|+\|u\|_{S^1(J_k)}^{4}\right)^{\frac{1}{\rho}}.
		\]
		By the sobolev embedding, one has
		\[ \| |s| |u|\|_{L^{\gamma}(J_k, L^q)} \lesssim \|u\|_{L^{\infty}(J_k, H^1)} \|s\|_{L^{\gamma}(J_k)} \lesssim |J_k|^{\frac{\gamma+1}{\gamma}},
		\]
		where we have used the conservation laws and the fact that $\|s\|_{L^{\gamma}(J_k)}=\left(\frac{t_{k+1}^{\gamma+1}-t_k^{\gamma+1}}{\gamma+1}\right)^{\frac{1}{\gamma}} \lesssim (t_{k+1}-t_{k})^{\frac{\gamma+1}{\gamma}}=|J_k|^{\frac{\gamma+1}{\gamma}}$. Therefore,
		\[
		\left\|  |s||u|  \frac{{\rm e}^{\alpha (1+\eta) |u|^2}-1}{|x|^{b+1}} \right\|_{L^{\frac{2}{1+2 \delta}}(J_k,L^{\frac{1}{1-\delta}})} \lesssim |J_k|^{\frac{\gamma+1}{\gamma}} \left(|J_k|+\|u\|_{S^1(J_k)}^{4}\right)^{\frac{1}{\rho}}.
		\]
		Collecting the above estimates, we obtain
		\begin{align*}
		\|Hu\|_{S^0(J_k)} \lesssim \|H(t_k)u(t_k)\|_{L^2} &+ \|Hu\|_{S^0(J_k)} \left(|J_k|^{\frac{1+\delta}{2}}+|J_k|^{\left(1-\frac{\sigma}{4}\right)\frac{1+\delta}{2}}\|u\|_{S^1(J_k)}^{\frac{(1+\delta)\sigma}{2}}\right) \\
		&+|J_k|^{\frac{\gamma+1}{\gamma}} \left(|J_k|+\|u\|_{S^1(J_k)}^{4}\right)^{\frac{1}{\rho}} \\
		\lesssim \|H(t_k) u(t_k)\|_{L^2} &+ \|Hu\|_{S^0(J_k)} \left( \varepsilon^{\frac{1+\delta}{2}} + \varepsilon^{\left(1-\frac{\sigma}{4}\right)\frac{1+\delta}{2}} \|u\|^{\frac{(1+\delta)\sigma}{2}}_{S^1(J_k)} \right) \\
		& + \varepsilon^{\frac{\gamma+1}{\gamma}} \left(\varepsilon + \|u\|^4_{S^1(J_k)}\right)^{\frac{1}{\rho}}.
		\end{align*}
		Since $\|u\|_{S^1(\R)}<\infty$, we can choose $\varepsilon>0$ small enough depending on $S,T$ and $\|u\|_{S^1(\R)}$ to get
		\[
		\|Hu\|_{S^0(J_k)} \leq C\|H(t_k) u(t_k)\|_{L^2} + C,
		\]
		for some constant $C>0$ independent of $S$ and $T$. By induction, we obtain for each $k$,
		\[
		\|Hu\|_{S^0(J_k)} \leq C\|H(-S)u(-S)\|_{L^2} + C.
		\]
		Summing over all subintervals $J_k$, we prove $\|Hu\|_{S^0([-S,T])} <\infty$. The proof is complete.
	\end{proof}


	\section{Scattering in weighted $L^2$ space}
	\label{S7}
	In this section, we give the proof of our main result in Theorem $\ref{Main-NLS}$. 
	
	\noindent \textit{Proof of Theorem $\ref{Main-NLS}$.} Let $u_0 \in \Sigma$ and $u$ the corresponding global solution to \eqref{NLSexp}. By Duhamel formula, we have
	\[
	{\rm e}^{-it\Delta} u(t) = u_0 -i \int_0^t {\rm e}^{-is\Delta} N(x,u) ds.
	\]
	Let $0<t_1<t_2<+\infty$. It follows from Strichartz estimates that
	\begin{align*}
	\|{\rm e}^{-it_2 \Delta} u(t_2) - {\rm e}^{-it_1\Delta} u(t_1)\|_{H^1} &= \left\| \int_{t_1}^{t_2} {\rm e}^{-is\Delta} N(x,u) ds \right\|_{H^1} \\
	&\lesssim \|N(x,u)\|_{L^{\frac{2}{1+2\delta}}((t_1,t_2), L^{\frac{1}{1-\delta}})}+\|\nabla N(x,u)\|_{L^{\frac{2}{1+2\delta}}((t_1,t_2), L^{\frac{1}{1-\delta}})}.
	\end{align*}
	Arguing as in the proof of \eqref{uniform bound}, we obtain
	\begin{align} \label{scattering}
	\|{\rm e}^{-it_2 \Delta} u(t_2) - {\rm e}^{-it_1 \Delta} u(t_1)\|_{H^1} &\lesssim \|u\|_{S^1((t_1,t_2))}  \|u\|_{L^{\frac{4}{1+\delta}}((t_1,t_2),L^{\frac{4}{\delta}})}^{2}+\|u\|_{L^{\frac{2}{\delta}}((t_1,t_2),L^{\frac{4}{\delta}})}^{2} \|u\|_{S^1((t_1,t_2))}^3 \nonumber \\
	&\mathrel{\phantom{\lesssim}}+ \|u\|_{L^{\frac{2}{1+2 \delta}}((t_1,t_2), L^q)}+\|u\|_{L^{\gamma}((t_1,t_2),L^q)} \|u\|_{S^1((t_1,t_2))}^{\frac{4}{\rho}},
	\end{align}
	where $0<\delta< \min\left\{\frac{1-b}{2}, \frac{1}{2(2-b)} \right\}$, $\gamma>\frac{2}{1+2\delta}$ and $q>\max\left\{\frac{1}{1-b-2\delta}, \frac{2(2-b)}{1-2\delta(2-b)} \right\}$. By adding an additional condition $q>\frac{4}{1-2\delta}$, we learn from Corollary $\ref{Bound1}$ that $\|u\|_{L^{p}((a,\infty),L^q)}< \infty$ for all $a>0$ and $(p,q)$ in \eqref{pair-pq}. In particular, $\|u\|_{L^{\frac{2}{1+2\delta}}((t_1,t_2), L^q)} \rightarrow 0$ as $t_1 \rightarrow +\infty$. Since $\|u\|_{S^1(\R)} <\infty$, we infer that the right hand side of \eqref{scattering} tends to zero as $t_1,t_2 \rightarrow +\infty$ provided that $0<\delta< \min\left\{\frac{1-b}{2}, \frac{1}{2(2-b)} \right\}$, $\gamma>\frac{2}{1+2\delta}$ and $q>\max\left\{\frac{1}{1-b-2\delta}, \frac{2(2-b)}{1-2\delta(2-b)}, \frac{4}{1-2\delta} \right\}$. This shows that ${\rm e}^{-it\Delta} u(t)$ is a Cauchy sequence in $H^1$ as $t\rightarrow +\infty$. There thus exists $u_0^+ \in H^1$ such that ${\rm e}^{-it\Delta} u(t) \rightarrow u_0^+$ as $t\rightarrow +\infty$. It remains to show that this scattering state $u_0^+$ belongs to $\Sigma$. Since $x+2 i t \nabla$ commutes with $i \partial_t +\Delta u$, the Duhamel formula gives
	\[
	(x+ 2i t \nabla )u(t) = {\rm e}^{it\Delta} x u_0 - i \int_0^t {\rm e}^{i(t-s)\Delta} (x+ 2is\nabla) N(x,u) ds.
	\]
	Using the fact that $x+2it\nabla = {\rm e}^{it\Delta} x {\rm e}^{-it\Delta}$, we write
	\[
	x {\rm e}^{-it\Delta} u(t) = x u_0 - i \int_0^t {\rm e}^{-is\Delta} (x+2is\nabla) N(x,u) ds. 
	\]
	By Strichartz estimates, we have
	\begin{align*}
	\|x {\rm e}^{-it_2\Delta} u(t_2) - x {\rm e}^{-it_1\Delta} u(t_1)\|_{L^2} &= \left\| \int_{t_1}^{t_2} {\rm e}^{-is \Delta} (x +2is\nabla) N(x,u) ds \right\|_{L^2} \\
	&\lesssim \|(x+2is\nabla) N(x,u)\|_{L^{\frac{2}{1+2\delta}}((t_1,t_2), L^{\frac{1}{1-\delta}})} \\
	&\lesssim \|2|s||\nabla N(x,v)|\|_{L^{\frac{2}{1+2\delta}}((t_1,t_2), L^{\frac{1}{1-\delta}})},
	\end{align*}
	where $v(t,x):={\rm e}^{-i\frac{|x|^2}{4t}}u(t,x)$. Estimating as in the proof of Theorem $\ref{theo-glo-bou-2}$, we get
	\begin{align} \label{scattering-sigma}
	\|x {\rm e}^{-it_2\Delta} u(t_2) - x {\rm e}^{-it_1\Delta} u(t_1)\|_{L^2} &\lesssim \|w\|_{S^0(I)} \|u\|^2_{L^{\frac{4}{1+\delta}}(I, L^{\frac{4}{\delta}})} + \|u\|^2_{L^{\frac{2}{\delta}}(I, L^{\frac{4}{\delta}})} \|w\|_{S^0(I)} \|u\|^2_{S^1(I)} \nonumber \\
	&\mathrel{\phantom{\lesssim}} + \||s|^{\frac{1}{3}} |u|\|^3_{L^{\frac{6}{1+2\delta}}(I, L^{3q})} + \||s|^{\frac{1}{3}} |u|\|^3_{L^{3\gamma}(I, L^{3q})} \|u\|^{\frac{4}{\rho}}_{S^1(I)},
	\end{align}
	where $w(t)= (x+2it\nabla) u(t)$, $I=(t_1,t_2)$, $0<\delta < \min\left\{\frac{1-b}{2}, \frac{1}{2(2-b)}\right\}$, $\gamma>\frac{2}{1+2\delta}$ and $q>\max\left\{ \frac{1}{1-b-2\delta}, \frac{2(2-b)}{1-2\delta(2-b)}\right\}$. Since $0<\delta<\min\left\{\frac{1-b}{2}, \frac{1}{2(2-b)}\right\}$, Corollary $\ref{Bound1}$ implies that the norms $\|u\|_{L^{\frac{4}{1+\delta}}((a,\infty), L^{\frac{4}{\delta}})}$ and $\|u\|_{L^{\frac{2}{\delta}}((a,\infty), L^{\frac{4}{\delta}})}$ are finite for any $a>0$. Moreover, adding an additional condition $q>\frac{4}{3-2\delta}$, the condition \eqref{mn} is satisfied for $(m,n) = \left\{ \left(\frac{6}{1+2\delta}, 3q\right), (3\gamma, 3q)\right\}$. Thus the norms $\||s|^{\frac{1}{3}} |u|\|_{L^{\frac{6}{1+2\delta}}((a,\infty), L^{3q})}$ and $\||s|^{\frac{1}{3}} |u|\|_{L^{3\gamma}((a,\infty),L^{3q})}$ are both finite for any $a>0$. In particular, $\||s|^{\frac{1}{3}} |u|\|_{L^{\frac{6}{1+2\delta}}((t_1,t_2), L^{3q})} \rightarrow 0$ as $t_1 \rightarrow +\infty$. Since $\|u\|_{S^1(\R)} <\infty$ and $\|w\|_{S^0(\R)} <\infty$, the right hand side of \eqref{scattering-sigma} tends to zero as $t_1,t_2 \rightarrow +\infty$. This implies that $x {\rm e}^{-it\Delta} u(t)$ is a Cauchy sequence in $L^2$ as $t\rightarrow +\infty$. We thus have $x u_0^+ \in L^2$ and so $u_0^+ \in \Sigma$. Moreover, we have
	\[
	u_0^+(t) = u_0 - i \int_t^{+\infty} {\rm e}^{-is\Delta} N(x,u) ds.
	\]
	Repeating the above argument, we prove that
	\[
	\|{\rm e}^{-it\Delta} u(t) - u_0^+\|_{\Sigma} \rightarrow 0 \text{ as } t \rightarrow +\infty.
	\]
	This completes the proof for positive times, the one for negative times is similar. 
	\hfill $\Box$
	
	\appendix
	\renewcommand*{\thesection}{\Alph{section}}
	\section{Lorentz spaces}
	\label{SA}
	We recall some basic facts about the Lorentz spaces which are relevant to our
	study. We refer the reader to \cite{Berg, Gra, Lem, Hunt, Stein} and references therein for more properties and
	information on Lorentz spaces.
	\begin{defi}
		Let $u: \R^N\rightarrow \mathbb{R}$ be a measurable function. The distribution function of $u$ is given by
		$$
		{\mathbf d}_{u}(\lambda):=|\{\, x\in\R^N;\;\; |u(x)|>\lambda\,\}|, \qquad \lambda \in (0,\infty).
		$$
		Here, the notation $|E|$ stands for the $N$-dimensional Lebesgue measure of $E$. The (unidimensional) decreasing rearrangement of $u$, denoted by $u^{*}$, is defined by
		$$u^{*}(s) =\inf\,\left\{\, \lambda>0;\;\;\; \mathbf{d}_{u}(\lambda)<s\,\right\},\qquad s>0.$$
	\end{defi}
	It is clear that $\mathbf{d}_u$ and $u^{*}$ are non-negative non-increasing functions. The Lorentz spaces $L^{p,q}(\R^N)$ are defined as follows.
	\begin{defi}
		\label{Lorent}
		Let $0<p<\infty$ and $0<q\leq\infty$. Then
		$$
		L^{p,q}(\R^N)=\left\{\, u \;\;\;\mbox{measurable};\;\;\; \|u\|_{L^{p,q}}<\infty\,\right\},
		$$
		where
		\begin{eqnarray*}
			\|u\|_{L^{p,q}}&=&\; \left\{
			\begin{array}{cllll} \left(\frac{q}{p}\int_0^\infty\,\left(s^{1/p} u^{*}(s)\right)^{q}\frac{ds}{s}\right)^{1/q}\quad&\mbox{if}&\quad
				0<p,q<\infty\\\\ \displaystyle\sup_{s>0}\,\left(s^{1/p}u^{*}(s)\right) \quad
				&\mbox{if}&\quad  0<p<\infty,\;\; q=\infty
			\end{array}
			\right.
		\end{eqnarray*}
	\end{defi}
	We have $L^{p,p}=L^p$ and by convention $L^{\infty, \infty}=L^\infty$. Another way to define the Lorentz space $L^{p, q}$ is via real interpolation theory as follows (see \cite{BenSha})
	$$
	L^{p,q}=\Big[L^1, L^\infty\Big]_{1-{1\over p}, q},\quad 1<p<\infty,\;\; 1\leq q\leq\infty.
	$$
	One of the difficulties in our problem is the singular weight $|x|^{-b}$ in the nonlinearity. Since this weight does not belong to any Lebesgue space we have to treat it differently. Fortunately, $|x|^{-b}$ belongs to the Lorentz space $L^{{2\over b}, \infty}(\R^2)$ which plays an important role in our proof.
	
	The following lemma will be useful.
	\begin{lem}
		\label{IneqLor}
		Let $1<p<\infty$, $1<p_1<\infty$ and $1\leq p_2\leq\infty$ be such that
		$$
		\frac{1}{p}=\frac{1}{p_1}+\frac{1}{p_2}\,.
		$$
		Then
		\begin{equation}
		\label{Lor}
		\|fg\|_{L^p}\leq\,C\|f\|_{L^1}^{1-\theta}\,\|f\|_{L^\infty}^\theta\, \|g\|_{L^{p_2,\infty}}\,
		\end{equation}
		where $\theta=1-\frac{1}{p_1}$.
	\end{lem}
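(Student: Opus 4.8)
The plan is to reduce the bilinear estimate to a one–dimensional computation with decreasing rearrangements, exploiting that $f\in L^1\cap L^\infty$ controls $f^*$ pointwise by $\min\{\|f\|_{L^\infty},\,\|f\|_{L^1}/t\}$. First I would record three elementary facts. (i) The rearrangement is submultiplicative up to a dilation: $(fg)^*(t)\le f^*(t/2)\,g^*(t/2)$ for all $t>0$ (see e.g. \cite{Gra}). (ii) By the very definition of the weak Lorentz norm, $g^*(t)\le \|g\|_{L^{p_2,\infty}}\,t^{-1/p_2}$. (iii) Since $f^*$ is nonincreasing, $f^*(t)\le\|f\|_{L^\infty}$ and $t\,f^*(t)\le\int_0^t f^*(s)\,ds\le\|f\|_{L^1}$, whence $f^*(t)\le\min\{\|f\|_{L^\infty},\,\|f\|_{L^1}/t\}$.

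Then I would write $\|fg\|_{L^p}^p=\int_0^\infty\big((fg)^*(t)\big)^p\,dt$, apply (i) followed by the change of variables $u=t/2$, and then (ii), to get
\[
\|fg\|_{L^p}^p\le 2\int_0^\infty \big(f^*(u)\big)^p\big(g^*(u)\big)^p\,du\le 2\,\|g\|_{L^{p_2,\infty}}^p\int_0^\infty \big(f^*(u)\big)^p\,u^{-p/p_2}\,du .
\]
Here $\tfrac{p}{p_2}=p\big(\tfrac1p-\tfrac1{p_1}\big)=1-\tfrac{p}{p_1}\in(0,1)$. The final integral is split at the crossover scale $u_\ast:=\|f\|_{L^1}/\|f\|_{L^\infty}$, using $f^*\le\|f\|_{L^\infty}$ on $(0,u_\ast)$ and $f^*(u)\le\|f\|_{L^1}/u$ on $(u_\ast,\infty)$. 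Both pieces converge — near $0$ because $p/p_2<1$ (equivalently $p<p_2$, which follows from $\tfrac1{p_1}>0$), near $\infty$ because $p+p/p_2>1$ since $p\ge1$ — and a direct computation of the two power integrals shows that each equals a constant (depending only on $p,p_1$) times $\|f\|_{L^1}^{p/p_1}\|f\|_{L^\infty}^{p(1-1/p_1)}=\|f\|_{L^1}^{p(1-\theta)}\|f\|_{L^\infty}^{p\theta}$, with $\theta=1-\tfrac1{p_1}$. Taking $p$-th roots yields \eqref{Lor}.

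The one genuine subtlety — and the reason the hypothesis $f\in L^1\cap L^\infty$, rather than merely $f\in L^{p_1}$, is needed — is that the scaling relation $\tfrac1p=\tfrac1{p_1}+\tfrac1{p_2}$ forces $p<p_1$, so the ordinary Hölder inequality in Lorentz spaces (or O'Neil's inequality) only gives $fg\in L^{p,p_1}$, which is strictly weaker than $L^p=L^{p,p}$; the upgrade to the honest $L^p$ norm is bought precisely by the extra decay of $f^*$ beyond the scale $u_\ast$, captured in fact (iii). One could alternatively first interpolate $\|f\|_{L^{p_1}}\le\|f\|_{L^1}^{1-\theta}\|f\|_{L^\infty}^{\theta}$ and then combine O'Neil's inequality with the same truncation of $f^*$, but this is not shorter, so I would present the self-contained rearrangement computation above. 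I do not expect any real obstacle beyond carrying out the two elementary power-integral computations and checking the exponents match.
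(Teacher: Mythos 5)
The paper states Lemma \ref{IneqLor} without proof, so there is no in-paper argument to compare against; your rearrangement proof is correct and complete. The three facts you invoke are all standard and valid: $(fg)^*(t)\le f^*(t/2)\,g^*(t/2)$ follows from $(fg)^*(t_1+t_2)\le f^*(t_1)g^*(t_2)$; $g^*(u)\le\|g\|_{L^{p_2,\infty}}u^{-1/p_2}$ is the definition of the weak norm; and $f^*(u)\le\min\{\|f\|_{L^\infty},\|f\|_{L^1}/u\}$ comes from monotonicity of $f^*$. The exponent bookkeeping is also right: $p/p_2=1-p/p_1\in(0,1)$ makes the integral converge at $0$, $p>1$ handles $\infty$, and after splitting at $u_\ast=\|f\|_{L^1}/\|f\|_{L^\infty}$ both pieces evaluate to $\|f\|_{L^1}^{p(1-\theta)}\|f\|_{L^\infty}^{p\theta}$ times $\frac{1}{p(1-\theta)}$ and $\frac{1}{p\theta}$ respectively, giving the constant $C=\bigl(\frac{2}{p\theta(1-\theta)}\bigr)^{1/p}$; the degenerate case $p_2=\infty$ (so $p=p_1$) reduces to the elementary $\|f\|_{L^p}^p\le\|f\|_{L^1}\|f\|_{L^\infty}^{p-1}$. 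A slightly more compact route, closer to how the paper packages the appendix (where $L^{p,q}$ is presented as the real interpolation space $[L^1,L^\infty]_{1-1/p,q}$), is to use $\|f\|_{L^{p_1,1}}\lesssim\|f\|_{L^1}^{1-\theta}\|f\|_{L^\infty}^{\theta}$ from the $(\cdot,\cdot)_{1-1/p_1,1}$ interpolation inequality, then O'Neil's H\"older inequality $L^{p_1,1}\cdot L^{p_2,\infty}\hookrightarrow L^{p,1}\hookrightarrow L^{p}$; your self-contained computation is essentially the unwound version of that argument, with the crossover scale $u_\ast$ playing the role of the $K$-functional truncation. Either presentation is fine.
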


	
	\section*{Acknowledgments}
	V. D. D. would like to express his deep thanks to his wife-Uyen Cong for her encouragement and support. M. M.  would like to thank K. Nakanishi for enlightening discussions concerning the interaction Morawetz inequalities. The authors would like to thank the reviewer for his/her helpful comments and suggestions.

\end{document}